\renewcommand*{\backrefalt}[4]{%
    \ifcase #1 \footnotesize{(Not cited.)}%
    \or        \footnotesize{(Cited on page~#2.)}%
    \else      \footnotesize{(Cited on pages~#2.)}%
    \fi}
\newtheorem{theorem}{Theorem}[section]
\newtheorem{lemma}[theorem]{Lemma}
\newtheorem{proposition}[theorem]{Proposition}
\newtheorem{remark}[theorem]{Remark}
\numberwithin{equation}{section}
\newcommand{\Br}{\mathbb{R}}
\newcommand{\BB}{\mathbb{B}}
\newcommand{\proj}{\mathcal{P}}
\newcommand{\op}{\textnormal{op}}
\newcommand{\argmin}{\mathop{\rm argmin}}
\newcommand{\ECal}{\mathcal{E}}
\newcommand{\GCal}{\mathcal{G}}
\newcommand{\HCal}{\mathcal{H}}
\newcommand{\XCal}{\mathcal{X}}
\newcommand{\dom}{\textnormal{\bf dom}}
\newcommand{\br}{\mathbb{R}}
\newcommand{\ba}{\begin{array}}
\newcommand{\ea}{\end{array}}
\begin{document}

%%%%%%% TITLE PAGE %%%%%%%%%%%%%%%%%%%%%%%%%%%%%%%%%%%%%%%%%%%%%%%%%%%

\begin{center}

{\bf{\LARGE{Monotone Inclusions, Acceleration and \\ [.2cm] Closed-Loop Control}}}

\vspace*{.2in}
{\large{ \begin{tabular}{c}
Tianyi Lin$^\diamond$ \and Michael I. Jordan$^{\diamond, \dagger}$ \\
\end{tabular}
}}

\vspace*{.2in}

\begin{tabular}{c}
Department of Electrical Engineering and Computer Sciences$^\diamond$ \\
Department of Statistics$^\dagger$ \\
University of California, Berkeley \\
\end{tabular}

\vspace*{.2in}

\today

\vspace*{.2in}

\begin{abstract}
We propose and analyze a new dynamical system with \textit{a closed-loop control law} in a Hilbert space $\HCal$, aiming to shed light on the acceleration phenomenon for \textit{monotone inclusion} problems, which unifies a broad class of optimization, saddle point and variational inequality (VI) problems under a single framework. Given an operator $A: \HCal \rightrightarrows \HCal$ that is maximal monotone, we propose a closed-loop control system that is governed by the operator $I - (I + \lambda(t)A)^{-1}$, where a feedback law $\lambda(\cdot)$ is tuned by the resolution of the algebraic equation $\lambda(t)\|(I + \lambda(t)A)^{-1}x(t) - x(t)\|^{p-1} = \theta$ for some $\theta > 0$. Our first contribution is to prove the existence and uniqueness of a global solution via the Cauchy-Lipschitz theorem. We present a simple Lyapunov function for establishing the weak convergence of trajectories via the Opial lemma and strong convergence results under additional conditions.  We then prove a global ergodic convergence rate of $O(t^{-(p+1)/2})$ in terms of a gap function and a global pointwise convergence rate of $O(t^{-p/2})$ in terms of a residue function. Local linear convergence is established in terms of a distance function under an error bound condition.  Further, we provide an algorithmic framework based on the implicit discretization of our system in a Euclidean setting, generalizing the large-step HPE framework~\citep{Monteiro-2012-Iteration}. Even though the discrete-time analysis is a simplification and generalization of existing analyses for a bounded domain, it is largely motivated by the aforementioned continuous-time analysis, illustrating the fundamental role that the closed-loop control plays in acceleration in monotone inclusion. A highlight of our analysis is a new result concerning $p^\textnormal{th}$-order tensor algorithms for monotone inclusion problems, complementing the recent analysis for saddle point and VI problems~\citep{Bullins-2022-Higher}. 
\end{abstract}

\end{center}

\section{Introduction}
Monotone inclusion refers to the problem of finding a root of a point-to-set maximal monotone operator $A: \HCal \rightrightarrows \HCal$ (see the definition in~\citet{Rockafellar-1970-Convex}), where $\HCal$ is a real Hilbert space. Formally, we have
\begin{equation}\label{prob:MI}
\textnormal{Find } x \in \HCal \textnormal{ such that } 0 \in Ax. 
\end{equation}
Monotone inclusion is a fundamental problem in applied mathematics, unifying a broad class of optimization, saddle point and variational inequality problems in a single framework. In particular,  the minimization of a convex function $f$ consists in finding $x \in \HCal$ such that $0 \in \partial \Phi(x)$, where $\partial \Phi(\cdot)$---the subdifferential of $\Phi$---is known to be maximal monotone if $\Phi$ is proper, lower semi-continuous and convex. As a further example, letting $A = F+\partial \textnormal{\bf 1}_\XCal$ where $F: \HCal \mapsto \HCal$ is continuous and monotone and $\textbf{1}_\XCal$ is an indicator function of a closed and convex set $\XCal \subseteq \HCal$, the monotone inclusion problem becomes
\begin{equation*}
\textnormal{Finding } x \in \XCal \textnormal{ such that } \langle F(x), y-x\rangle \geq 0 \textnormal{ for all } y \in \XCal. 
\end{equation*}
This is known as the variational inequality (VI) problem~\citep{Facchinei-2007-Finite}, and it also covers many classical problems as special cases~\citep{Karamardian-1972-Complementarity, Kelley-1995-Iterative}. Over several decades, the monotone inclusion problem has found  applications in a wide set of fields, including partial differential equations~\citep{Polyanin-2003-Handbook}, game theory~\citep{Osborne-2004-Introduction}, signal/image processing~\citep{Bose-2003-Digital} and location theory~\citep{Farahani-2009-Facility}; see also~\citet[Section~1.4]{Facchinei-2007-Finite} for additional applications. Recently, the model has begun to see applications in machine learning as an abstraction of saddle point problems, with examples including generative adversarial networks (GANs)~\citep{Goodfellow-2014-Generative}, online learning in games~\citep{Cesa-2006-Prediction}, adversarial learning~\citep{Sinha-2018-Certifiable} and distributed computing~\citep{Shamma-2008-Cooperative}. These applications have made significant demands with respect to computational feasibility, and the design of efficient algorithms for solving monotone inclusions has moved to the fore in the past decade~\citep{Eckstein-2009-General, Briceno-2011-Monotone, Combettes-2013-Systems, Combettes-2018-Asynchronous, Davis-2015-Convergence, Briceno-2018-Forward}.

A simple and basic tool for solving monotone inclusion problems is the celebrated proximal point algorithm (PPA)~\citep{Martinet-1970-Regularisation,Martinet-1972-Determination,Rockafellar-1976-Monotone}.  The idea is to reformulate Eq.~\eqref{prob:MI} as a fixed-point problem given by 
\begin{equation}\label{prob:FP}
\textnormal{Find } x \in \HCal \textnormal{ such that } x - (I + \lambda A)^{-1}x = 0,
\end{equation}
where $\lambda > 0$ is a parameter and $(I + \lambda A)^{-1}$ is the resolvent of index $\lambda$ of $A$. Letting $x_0 \in \HCal$ be an initial point, the PPA scheme is implemented by  
\begin{equation*}
x_{k+1} = (I + \lambda A)^{-1}x_k, \quad \textnormal{for all } k \geq 0. 
\end{equation*}
In the special case of convex optimization, where $A = \partial f$, the convergence rate of PPA is $O(1/k)$ in terms of objective function gap~\citep{Guler-1991-Convergence}.  It has been accelerated to $O(1/k^2)$ by~\citet{Guler-1992-New}. However, this acceleration can not be extended to  monotone inclusion problems in full generality, although extensions have been found under certain conditions (e.g., cocoercivity)~\citep{Alvarez-2001-Inertial,Attouch-2019-Convergence,Attouch-2020-Convergence}. In the context of monotone VIs, the ergodic convergence rate is $O(1/k)$ in terms of a gap function and the pointwise convergence rate is $O(1/\sqrt{k})$ in terms of a residue function~\citep{Facchinei-2007-Finite}. The former rate has matched the lower bound for first-order methods~\citep{Diakonikolas-2020-Halpern} while the latter rate can be improved using new acceleration techniques~\citep{Kim-2021-Accelerated}.  This line of work focuses, however, on first-order algorithms and does not regard acceleration as a general phenomenon to be realized via appeal to high-order smoothness structure of an operator. As noted in a seminal work~\citep{Monteiro-2012-Iteration}, there remains a gap in our understanding of accelerated $p^\textnormal{th}$-order tensor algorithms for monotone inclusion problems, for the case of $p \geq 2$, where the algorithmic design and convergence analysis is much more delicate.

In this paper, we avail ourselves of a continuous-time viewpoint for formulating acceleration in monotone inclusion, making use of a closed-loop control mechanism.  We build on a two-decade trend that exploits the interplay between continuous-time and discrete-time perspectives on dynamical systems for monotone inclusion problems~\citep{Alvarez-1998-Dynamical,Attouch-2001-Second,Alvarez-2002-Second,Attouch-2011-Continuous,Attouch-2012-Second,Mainge-2013-First,Attouch-2013-Global,Abbas-2014-Newton,Attouch-2016-Dynamic,Attouch-2020-Newton,Attouch-2021-Continuous}. As in these papers, our work makes use of Lyapunov functions to transfer asymptotic behavior and rates of convergence between continuous time and discrete time. 

Our point of departure is the following continuous-time problem that incorporates a time-varying function $\lambda(\cdot)$ in place of $\lambda$ in the fixed-point formulation of monotone inclusion in Eq.~\eqref{prob:FP}:
\begin{equation}\label{sys:general}
\dot{x}(t) + x(t) - (I + \lambda(t)A)^{-1}x(t) = 0.
\end{equation}
The time evolution  of $\lambda(\cdot)$ is specified by a closed-loop control law:
\begin{equation}\label{sys:choice-feedback}
\lambda(t)\|\dot{x}(t)\|^{p-1} = \theta,
\end{equation}
where $\theta > 0$ and the order $p \in \{1, 2, \ldots\}$ are parameters. We assume that $x(0) \in \{x \in \HCal \mid 0 \notin Ax\}$.  This is not restrictive since $0 \in Ax(0)$ implies that the monotone inclusion problem has been solved. Throughout the paper, we assume that \textit{$A$ is maximal monotone and $A^{-1}(0) = \{x \in \HCal \mid 0 \in Ax\}$ is a nonempty set}. As we shall see, our main results on the existence and uniqueness of global solutions and the convergence properties of trajectories are valid under this general assumption. Finally, we remark that our control law in Eq.~\eqref{sys:choice-feedback} is a natural generalization of a similar equation in~\citet{Attouch-2016-Dynamic} that models the proximal Newton algorithm specialized to convex optimization.

\paragraph{Contributions.} We first study the closed-loop control system in Eq.~\eqref{sys:general} and~\eqref{sys:choice-feedback} and prove the existence and uniqueness of a global solution via the Cauchy-Lipschitz theorem (see Theorem~\ref{Theorem:Global-Existence-Uniquess}). For $p=1$, we have $\lambda(t) = \theta$ and our system becomes the continuous-time PPA dynamics, indicating that our system extends PPA from first-order monotone inclusion to high-order monotone inclusion.  Further, we provide a Lyapunov function that allows us to establish weak convergence of trajectories via the Opial lemma (see Theorem~\ref{Theorem:Weak-Convergence}) and yield strong convergence results under additional conditions (see Theorem~\ref{Theorem:Strong-Convergence}).  We obtain an ergodic convergence rate of $O(t^{-(p+1)/2})$ in terms of a gap function and a pointwise convergence rate of $O(t^{-p/2})$ in terms of a residue function (see Theorem~\ref{Theorem:Trajectory-Convergence-Rate}). Local linear convergence guarantee is established under an error-bound condition (see Theorem~\ref{Theorem:Trajectory-Convergence-Linear}).  Moreover, we provide an algorithmic framework based on the implicit discretization of our closed-looped control system and remark that it generalizes the large-step HPE framework of~\citet{Monteiro-2012-Iteration}. Our iteration complexity analysis, which is largely motivated by our continuous-time analysis, can be viewed as a simplification and generalization of the analysis in~\citet{Monteiro-2012-Iteration} for bounded domains (see Theorem~\ref{Theorem:CAF-main}). Finally, we combine our algorithmic framework with an approximate tensor subroutine, yielding a suite of accelerated $p^\textnormal{th}$-order tensor algorithms for monotone inclusion problems with $A = F + H$, where $F$ has Lipschitz $(p-1)^\textnormal{th}$-order derivative and $H$ is simple and maximal monotone. A highlight of our analysis is a set of new theoretical results concerning the convergence rate of $p^\textnormal{th}$-order tensor algorithms for monotone inclusion problems, complementing the previous analysis in~\citet{Bullins-2022-Higher}. 

\paragraph{Organization.} In Section~\ref{sec:control-system}, we study the closed-loop control system in Eq.~\eqref{sys:general} and~\eqref{sys:choice-feedback} and prove the global existence and uniqueness results. In Section~\ref{sec:convergence}, we establish the weak convergence of trajectories as well as strong convergence results under additional conditions. We also prove the ergodic and pointwise convergence rates of trajectories in terms of a gap function and a residue function. In Section~\ref{sec:algorithm}, we propose an algorithmic framework based on the implicit discretization of our closed-loop control system in Euclidean setting, and then derive specific accelerated $p^\textnormal{th}$-order tensor algorithms for smooth and monotone inclusion problems. In Section~\ref{sec:conclusions}, we conclude our work with a brief discussion of future directions. All the proofs of lemmas are deferred to the appendix. 

\paragraph{Notation.} We use bold lower-case letters such as $x$ to denote vectors, and upper-case letters such as $X$ to denote tensors.  We let $\HCal$ be a real Hilbert space that is endowed with the scalar product $\langle \cdot, \cdot\rangle$. For a vector $x \in \HCal$, we let $\|x\|$ denote its norm induced by $\langle \cdot, \cdot\rangle$ and let $\BB_\delta(x) = \{x' \in \HCal \mid \|x'-x\| \leq \delta\}$ denote its $\delta$-neighborhood.  For the operator $A: \HCal \rightrightarrows \HCal$, we let $\dom(A) = \{x \in \HCal: Ax \neq \emptyset\}$. If $\HCal = \br^d$ is a real Euclidean space,  $\|x\|$ refers to the $\ell_2$-norm of $x$.  For a tensor $X \in \br^{d_1 \times d_2 \times \ldots \times d_p}$, we define 
\begin{equation*}
X[z^1, \cdots, z^p] = \sum_{1 \leq i_j \leq d_j, 1 \leq j \leq p} \left[X_{i_1, \cdots, i_p}\right]z_{i_1}^1 \cdots z_{i_p}^p, 
\end{equation*}
and denote by $\|X\|_\op = \max_{\|z^i\|=1, 1 \leq j \leq p} X[z^1, \cdots, z^p]$ its operator norm induced by $\|\cdot\|$.  Fix $p \geq 1$, we let $\GCal_L^p(\br^d)$ be a class of maximal monotone single-valued operators $F: \br^d \rightarrow \br^d$ where the $(p-1)^\textnormal{th}$-order Jacobian are $L$-Lipschitz. In other words, $F \in \GCal_L^p(\br^d)$ if $F$ is maximal monotone and $\|D^{(p-1)} F(x') - D^{(p-1)} F(x)\| \leq L\|x' - x\|$ for all $x, x' \in \br^d$ where $D^{(p-1)} F(x)$ is the $(p-1)^\textnormal{th}$-order Jacobian of $F$ at $x \in \br^d$ and $D^{(0)} F = F$ for all $F \in \GCal_L^1(\br^d)$. To be more specific, for $\{z_1, z_2, \ldots, z_p\} \subseteq \br^d$, we have
\begin{equation*}
D^{(p-1)} F(x)[z^1, \cdots, z^p] = \sum_{1 \leq i_1, \ldots, i_p \leq d} \left[\tfrac{\partial F_{i_1}}{\partial x_{i_2} \cdots \partial x_{i_p}}(x)\right] z_{i_1}^1 \cdots z_{i_p}^p. 
\end{equation*}
Given an iteration count $k \geq 1$, the notation $a = O(b(k))$ stands for $a \leq C \cdot b(k)$ where the constant $C > 0$ is independent of $k$. 

\section{The Closed-Loop Control System}\label{sec:control-system}
In this section, we study the closed-loop control system in Eq.~\eqref{sys:general} and Eq.~\eqref{sys:choice-feedback}. We start by analyzing the algebraic equation $\lambda(t)\|(I + \lambda(t)A)^{-1}x(t) - x(t)\|^{p-1} = \theta$ for $\theta \in (0, 1)$. We prove the existence and uniqueness of a local solution by appeal to the Cauchy-Lipschitz theorem and extend the local solution to a global solution using properties of the closed-loop control law $\lambda(\cdot)$. We conclude by discussing other systems in the literature that exemplify our general framework.

\subsection{Algebraic equation}\label{subsec:AE}
We study the algebraic equation,
\begin{equation}\label{Eq:AE-main}
\lambda(t)\|(I + \lambda(t)A)^{-1}x(t) - x(t)\|^{p-1} = \theta \in (0, 1),  
\end{equation}
which links the feedback control law $\lambda(\cdot)$ and the solution trajectory $x(\cdot)$. To streamline the presentation, for the case of $p \geq 2$ we define a function $\varphi: [0, +\infty) \times \HCal \mapsto [0, +\infty)$, such that 
\begin{equation*}
\varphi(\lambda, x) = \lambda^{\frac{1}{p-1}}\|x - (I + \lambda A)^{-1}x\|, \quad \varphi(0, x) = 0. 
\end{equation*}
By the definition of $\varphi$, Eq.~\eqref{Eq:AE-main} is equivalent to $\varphi(\lambda(t), x(t)) = \theta^{1/(p-1)}$. Our first lemma shows that the mapping $x \mapsto \varphi(\lambda, x)$ is Lipschitz continuous for fixed $\lambda > 0$.  We have:
\begin{lemma}\label{Lemma:AE-mapping-Lipschitz}
For $p \geq 2$, we have $|\varphi(\lambda, x_1) - \varphi(\lambda, x_2)| \leq \lambda^{\frac{1}{p-1}}\|x_1 - x_2\|$ for $\forall x_1, x_2 \in \HCal$ and $\forall \lambda > 0$. 
\end{lemma}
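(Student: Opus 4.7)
The plan is to strip off the common factor $\lambda^{1/(p-1)}$ and reduce the claim to a statement purely about the reflected resolvent $I - J_\lambda$, where $J_\lambda := (I + \lambda A)^{-1}$. Concretely, the inequality to prove is
\begin{equation*}
\bigl|\lambda^{1/(p-1)}\|x_1 - J_\lambda x_1\| - \lambda^{1/(p-1)}\|x_2 - J_\lambda x_2\|\bigr| \;\leq\; \lambda^{1/(p-1)}\|x_1 - x_2\|,
\end{equation*}
so dividing through by $\lambda^{1/(p-1)} > 0$ it suffices to show
\begin{equation*}
\bigl|\|x_1 - J_\lambda x_1\| - \|x_2 - J_\lambda x_2\|\bigr| \;\leq\; \|x_1 - x_2\|.
\end{equation*}

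Next I would apply the reverse triangle inequality to the two vectors $u := x_1 - J_\lambda x_1$ and $v := x_2 - J_\lambda x_2$, obtaining
\begin{equation*}
\bigl|\|u\| - \|v\|\bigr| \;\leq\; \|u - v\| \;=\; \bigl\|(x_1 - x_2) - (J_\lambda x_1 - J_\lambda x_2)\bigr\|.
\end{equation*}
The task is then reduced to bounding $\|(I - J_\lambda) x_1 - (I - J_\lambda) x_2\|$ by $\|x_1 - x_2\|$.

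The key fact I would invoke is the standard Moreau--Minty result that, since $A$ is maximal monotone, the resolvent $J_\lambda$ is firmly nonexpansive on $\HCal$, i.e.
\begin{equation*}
\|J_\lambda x_1 - J_\lambda x_2\|^2 \;\leq\; \langle J_\lambda x_1 - J_\lambda x_2,\, x_1 - x_2\rangle \quad \text{for all } x_1, x_2 \in \HCal.
\end{equation*}
It is a classical consequence that $I - J_\lambda$ is also firmly nonexpansive, and in particular $1$-Lipschitz:
\begin{equation*}
\bigl\|(I - J_\lambda) x_1 - (I - J_\lambda) x_2\bigr\| \;\leq\; \|x_1 - x_2\|.
\end{equation*}
Chaining this with the reverse triangle inequality above yields the desired bound, and multiplying by $\lambda^{1/(p-1)}$ gives the lemma.

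There is no real obstacle here; the only mild subtlety is recalling that firm nonexpansiveness of $J_\lambda$ is equivalent to firm nonexpansiveness of $I - J_\lambda$, which delivers the $1$-Lipschitz property of the reflected resolvent that the proof hinges on. All other steps are the reverse triangle inequality and a cancellation of $\lambda^{1/(p-1)}$.
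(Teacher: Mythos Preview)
Your proposal is correct and follows essentially the same route as the paper: factor out $\lambda^{1/(p-1)}$, apply the reverse triangle inequality, and use that $I-(I+\lambda A)^{-1}$ is $1$-Lipschitz (the paper simply cites this as a known fact from convex analysis, while you justify it via firm nonexpansiveness of the resolvent).
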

The next lemma presents a key property of the mapping $\lambda \mapsto \varphi(\lambda, x)$ for a fixed $x \in \HCal$.  It can be interpreted as a generalization of~\citet[Lemma~4.3]{Monteiro-2012-Iteration} and~\citet[Lemma~1.3]{Attouch-2016-Dynamic} from $p = 2$ to $p \geq 2$.  
\begin{lemma}\label{Lemma:AE-mapping-monotone}
For $p \geq 2$, we have
\begin{equation*}
\left(\tfrac{\lambda_2}{\lambda_1}\right)^{\frac{1}{p-1}}\varphi(\lambda_1, x) \leq \varphi(\lambda_2, x) \leq \left(\tfrac{\lambda_2}{\lambda_1}\right)^{\frac{p}{p-1}}\varphi(\lambda_1, x), 
\end{equation*}
for all $x \in \HCal$ and $0 < \lambda_1 \leq \lambda_2$. In addition, $\varphi(\lambda, x) = 0$ if and only if $0 \in Ax$ for any fixed $\lambda > 0$. 
\end{lemma}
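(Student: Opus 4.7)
The plan is to reduce both inequalities to two classical monotonicity facts about the resolvent $J_\lambda := (I+\lambda A)^{-1}$ and the Yosida regularization $A_\lambda x := (x - J_\lambda x)/\lambda$ of a maximal monotone operator in a Hilbert space. Writing $\varphi(\lambda,x) = \lambda^{1/(p-1)}\|x - J_\lambda x\|$, the target inequalities are algebraically equivalent to
\[
\|x - J_{\lambda_1} x\| \leq \|x - J_{\lambda_2} x\| \quad \text{and} \quad \frac{\|x - J_{\lambda_2} x\|}{\lambda_2} \leq \frac{\|x - J_{\lambda_1} x\|}{\lambda_1}
\]
whenever $0 < \lambda_1 \leq \lambda_2$, i.e., $\lambda \mapsto \|x - J_\lambda x\|$ is nondecreasing and $\lambda \mapsto \|A_\lambda x\|$ is nonincreasing.

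To establish these two monotonicity properties, I would set $y_i := J_{\lambda_i} x$ and $u_i := (x - y_i)/\lambda_i \in A y_i$. The monotonicity of $A$ gives $\langle u_1 - u_2,\, y_1 - y_2\rangle \geq 0$, and since $y_1 - y_2 = \lambda_2 u_2 - \lambda_1 u_1$, expanding yields $(\lambda_1 + \lambda_2)\langle u_1, u_2\rangle \geq \lambda_1 \|u_1\|^2 + \lambda_2 \|u_2\|^2$. Applying Cauchy--Schwarz on the left, rearranging gives $(\|u_1\| - \|u_2\|)(\lambda_1\|u_1\| - \lambda_2\|u_2\|) \leq 0$. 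Under the assumption $\lambda_1 \leq \lambda_2$ (and treating the trivial case $\|u_2\| = 0$ separately), a short case analysis forces $\|u_1\| \geq \|u_2\|$ together with $\lambda_1 \|u_1\| \leq \lambda_2 \|u_2\|$, which are precisely the two claimed monotonicity statements after unwinding the definition $\lambda_i \|u_i\| = \|x - J_{\lambda_i} x\|$.

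Plugging back into $\varphi$: multiplying $\|x - J_{\lambda_1} x\| \leq \|x - J_{\lambda_2} x\|$ by $\lambda_2^{1/(p-1)}$ and using $\lambda_2^{1/(p-1)} \geq \lambda_2^{1/(p-1)}$ on the left handles the lower bound after factoring out $\lambda_1^{1/(p-1)}$; multiplying the Yosida inequality by $\lambda_2 \cdot \lambda_2^{1/(p-1)} / \lambda_1^{1/(p-1)}$ gives the upper bound once one checks the exponent $1 + 1/(p-1) = p/(p-1)$. For the equivalence $\varphi(\lambda,x) = 0 \iff 0 \in Ax$, since $\lambda > 0$ the condition $\varphi(\lambda,x) = 0$ is equivalent to $x = J_\lambda x$, i.e., $x \in (I + \lambda A)^{-1} x$, which by definition of the resolvent is $x \in x + \lambda A x$, hence $0 \in A x$.

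The only point of real care is the case analysis following the Cauchy--Schwarz step, which could a priori allow $\|u_1\| \leq \|u_2\|$ with $\lambda_1 \|u_1\| \geq \lambda_2 \|u_2\|$; but this combination forces $\lambda_1 \geq \lambda_2$ and contradicts the assumption (unless $u_2 = 0$, which is handled directly since then $0 \in A x$ and one shows $u_1 = 0$ too via the uniqueness of the resolvent). No other step requires more than direct algebraic manipulation of $\varphi$ and the defining properties of $J_\lambda$.
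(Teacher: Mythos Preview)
Your proof is correct and follows essentially the same strategy as the paper: both reduce the two-sided estimate on $\varphi$ to the classical facts that $\lambda \mapsto \|x - J_\lambda x\|$ is nondecreasing and $\lambda \mapsto \|A_\lambda x\|$ is nonincreasing, and both extract these from the monotonicity inequality $\langle u_1 - u_2, y_1 - y_2\rangle \ge 0$ with $y_i = J_{\lambda_i}x$, $u_i = (x-y_i)/\lambda_i$.

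The only difference is in how that monotonicity inequality is exploited. You substitute $y_1 - y_2 = \lambda_2 u_2 - \lambda_1 u_1$, apply Cauchy--Schwarz, and obtain the product inequality $(\|u_1\|-\|u_2\|)(\lambda_1\|u_1\|-\lambda_2\|u_2\|)\le 0$, which you then resolve by a short case analysis. The paper instead performs two separate algebraic rearrangements: from $\lambda_1(v_1-v_2)+(z_1-z_2)=(\lambda_2-\lambda_1)v_2$ it reads off $\langle v_1-v_2,v_2\rangle\ge 0$ (hence $\|v_1\|\ge\|v_2\|$ directly), and from $v_1-v_2+\tfrac{1}{\lambda_2}(z_1-z_2)=(\tfrac{1}{\lambda_2}-\tfrac{1}{\lambda_1})(z_1-x)$ it reads off $\langle z_1-x,z_1-z_2\rangle\le 0$ (hence $\|x-z_2\|\ge\|x-z_1\|$). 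The paper's route is slightly cleaner in that each inequality falls out without a case split, but your argument is equally valid and the case analysis is handled correctly.
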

The following proposition provides a property of the mapping $\lambda \mapsto \varphi(\lambda, x)$, for any fixed $x \in \HCal$ satisfying $x \notin A^{-1}(0) = \{x' \in \HCal: 0 \in Ax'\}$. We have:
\begin{proposition}\label{Prop:AE-mapping-all}
Suppose that $p \geq 2$ and $x \notin A^{-1}(0)$ is fixed, the mapping $\varphi(\cdot, x)$ is continuous and strictly increasing. Further, we have $\varphi(0, x) = 0$ and $\varphi(\lambda, x) \rightarrow +\infty$ as $\lambda \rightarrow +\infty$.
\end{proposition}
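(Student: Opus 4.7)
The plan is to deduce all four conclusions directly from Lemma~\ref{Lemma:AE-mapping-monotone}, which already does the heavy analytic lifting. Since $x \notin A^{-1}(0)$ is fixed, the second part of Lemma~\ref{Lemma:AE-mapping-monotone} gives $\varphi(\lambda, x) > 0$ for every $\lambda > 0$, and this positivity is what lets the monotone sandwich inequalities be exploited for all four statements.

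First, I would establish strict monotonicity. For $0 < \lambda_1 < \lambda_2$, the lower bound in Lemma~\ref{Lemma:AE-mapping-monotone} gives
\begin{equation*}
\varphi(\lambda_2, x) \,\geq\, \left(\tfrac{\lambda_2}{\lambda_1}\right)^{\frac{1}{p-1}} \varphi(\lambda_1, x) \,>\, \varphi(\lambda_1, x),
\end{equation*}
where the strict inequality uses $\varphi(\lambda_1, x) > 0$ and $\lambda_2/\lambda_1 > 1$. Combined with $\varphi(0, x) = 0 < \varphi(\lambda, x)$ for $\lambda > 0$, this yields strict monotonicity on all of $[0, +\infty)$.

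Next I would handle continuity. For continuity at $\lambda_0 = 0$, applying Lemma~\ref{Lemma:AE-mapping-monotone} with $\lambda_1 = \lambda \leq \lambda_2 = \lambda_0'$ for some fixed reference $\lambda_0' > 0$ yields $\varphi(\lambda, x) \leq (\lambda/\lambda_0')^{1/(p-1)} \varphi(\lambda_0', x) \to 0 = \varphi(0, x)$ as $\lambda \to 0^+$. For continuity at any $\lambda_0 > 0$, I would sandwich $\varphi(\lambda, x)$ between two expressions that both tend to $\varphi(\lambda_0, x)$: for $\lambda \to \lambda_0^+$, Lemma~\ref{Lemma:AE-mapping-monotone} with $\lambda_1 = \lambda_0$, $\lambda_2 = \lambda$ gives $\varphi(\lambda_0, x) \leq \varphi(\lambda, x) \leq (\lambda/\lambda_0)^{p/(p-1)}\varphi(\lambda_0, x)$; for $\lambda \to \lambda_0^-$, the analogous choice $\lambda_1 = \lambda$, $\lambda_2 = \lambda_0$ gives $(\lambda/\lambda_0)^{p/(p-1)}\varphi(\lambda_0, x) \leq \varphi(\lambda, x) \leq (\lambda/\lambda_0)^{1/(p-1)} \varphi(\lambda_0, x)$. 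In each case both bounding factors tend to $1$, so squeezing delivers continuity.

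Finally, for the divergence as $\lambda \to +\infty$, I would fix any $\lambda_1 > 0$ and apply the lower bound of Lemma~\ref{Lemma:AE-mapping-monotone} with $\lambda_2 = \lambda$:
\begin{equation*}
\varphi(\lambda, x) \,\geq\, \left(\tfrac{\lambda}{\lambda_1}\right)^{\frac{1}{p-1}} \varphi(\lambda_1, x) \,\longrightarrow\, +\infty,
\end{equation*}
where again the key ingredient is $\varphi(\lambda_1, x) > 0$ from the second half of Lemma~\ref{Lemma:AE-mapping-monotone}. There is no real obstacle here; the one subtlety worth double-checking is simply that the hypothesis $x \notin A^{-1}(0)$ is used in every step (via positivity of $\varphi(\lambda, x)$ for $\lambda > 0$), so the proposition genuinely fails without it, and continuity at $\lambda_0 = 0$ is the only place where the \emph{upper} bound of Lemma~\ref{Lemma:AE-mapping-monotone} is essential.
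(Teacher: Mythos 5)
Your proof is correct and follows essentially the same route as the paper: all four claims are read off from the sandwich inequalities in Lemma~\ref{Lemma:AE-mapping-monotone}, with the hypothesis $x \notin A^{-1}(0)$ entering only through the positivity $\varphi(\lambda,x) > 0$; if anything, your explicit left/right squeeze at $\lambda_0 > 0$ is slightly more detailed than the paper's one-line remark. One tiny inaccuracy in your closing comment: your own continuity-at-$0$ bound $\varphi(\lambda,x) \leq (\lambda/\lambda_0')^{1/(p-1)}\varphi(\lambda_0',x)$ comes from the \emph{lower}-bound inequality of the lemma (rearranged), and the upper-bound inequality is also needed for left continuity at $\lambda_0 > 0$, but this does not affect the validity of the argument.
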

\begin{proof}
By definition of $\varphi$, we have $\varphi(0, x) = 0$ for any fixed $x \notin A^{-1}(0)$. Since $x \notin A^{-1}(0)$,  Lemma~\ref{Lemma:AE-mapping-monotone} guarantees that $\varphi(\lambda, x) > 0$ for all $\lambda > 0$ and $\varphi(\lambda_1, x) < \varphi(\lambda_2, x)$ for all $0 < \lambda_1 < \lambda_2$.  That is to say,  the mapping $\varphi(\cdot, x)$ is strictly increasing. In addition, we fix $\lambda_1 > 0$ and let $\lambda_2 \rightarrow +\infty$ in Lemma~\ref{Lemma:AE-mapping-monotone}, yielding that $\varphi(\lambda, x) \rightarrow +\infty$ as $\lambda \rightarrow +\infty$ for any fixed $x \notin A^{-1}(0)$. Finally,  we prove the continuity of the mapping $\lambda \mapsto \varphi(\lambda, x)$.  In particular, Lemma~\ref{Lemma:AE-mapping-monotone} implies that $\varphi(\lambda, x) \leq \lambda^{p/(p-1)} \varphi(1, x)$ for any fixed $\lambda \in (0, 1]$.  This together with the definition of $\varphi$ implies that 
\begin{equation*}
0 \leq \limsup_{\lambda \rightarrow 0^+} \varphi(\lambda, x) \leq \lim_{\lambda \rightarrow 0^+} \lambda^{\frac{p}{p-1}} \varphi(1, x) = 0, 
\end{equation*}
which implies the continuity of the mapping $\lambda \mapsto \varphi(\lambda, x)$ at $\lambda = 0$. Left continuity and right continuity of $\lambda \mapsto \varphi(\lambda, x)$ at $\lambda > 0$ follow from the first and the second inequality in Lemma~\ref{Lemma:AE-mapping-monotone}. 
\end{proof}
In view of Proposition~\ref{Prop:AE-mapping-all}, for any fixed $x \notin A^{-1}(0)$, there exists a unique $\lambda > 0$ so that $\varphi(\lambda, x) = \theta^{1/(p-1)}$ for some $\theta \in (0, 1)$. We accordingly define $\Omega \subseteq \HCal$ and the mapping $\Lambda_\theta: \Omega \mapsto (0, \infty)$ as follows:
\begin{equation}\label{def:AE-mapping}
\Omega = \HCal \setminus A^{-1}(0) \doteq \{x \in \HCal: 0 \notin Ax\}, \qquad \Lambda_\theta(x) = (\varphi(\cdot, x))^{-1}(\theta^{1/(p-1)}). 
\end{equation}
Since $A$ is maximal monotone, we have that $A^{-1}(0)$ is closed and thus $\Omega$ is open. Note that this simple fact is crucial to the subsequent analysis of the existence and uniqueness of a local solution. 

\subsection{Existence and uniqueness of a local solution}\label{subsec:LEU}
We prove the existence and uniqueness of a local solution of the closed-loop control system in Eq.~\eqref{sys:general} and Eq.~\eqref{sys:choice-feedback} by appeal to the Cauchy-Lipschitz theorem. The system considered in this paper can be written in the following form:
\begin{equation*}
\left\{\begin{array}{ll}
& \dot{x}(t) + x(t) - (I + \lambda(t)A)^{-1}x(t) = 0, \\
& \lambda(t)\|(I + \lambda(t)A)^{-1}x(t) - x(t)\|^{p-1} = \theta, \\ 
& x(0) = x_0 \in \Omega. 
\end{array}\right.  
\end{equation*}
Using the mapping $\Lambda_\theta: \Omega \mapsto (0, \infty)$ (see Eq.~\eqref{def:AE-mapping}), this system can be expressed as an autonomous system. Indeed, we have
\begin{equation*}
\lambda(t) = \Lambda_\theta(x(t)) \Longleftrightarrow \lambda(t)\|(I + \lambda(t)A)^{-1}x(t) - x(t)\|^{p-1} = \theta.  
\end{equation*}
Putting these pieces together, we arrive at an autonomous system in the compact form of 
\begin{equation}\label{sys:autonomous}
\dot{x}(t) = F(x(t)), \quad x(0) = x_0 \in \Omega,
\end{equation}
where the vector field $F: \Omega \mapsto \HCal$ is given by
\begin{equation}\label{sys:vector-field}
F(x) = (I + \Lambda_\theta(x)A)^{-1}x - x. 
\end{equation}
Our main theorem on the existence and uniqueness of a local solution is summarized as follows. 
\begin{theorem}\label{Theorem:Local-Existence-Uniquess} 
There exists $t_0 > 0$ such that the autonomous system in Eq.~\eqref{sys:autonomous} and Eq.~\eqref{sys:vector-field} has a unique solution $x: [0, t_0] \mapsto \HCal$. Equivalently,  the closed-loop control system in Eq.~\eqref{sys:general} and Eq.~\eqref{sys:choice-feedback} has a unique solution, $(x, \lambda): [0, t_0] \mapsto \HCal \times (0, +\infty)$.  In addition, $x(\cdot)$ is continuously differentiable and $\lambda(\cdot)$ is locally Lipschitz continuous. 
\end{theorem}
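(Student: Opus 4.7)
The plan is to invoke the Cauchy-Lipschitz (Picard-Lindelöf) theorem applied to the autonomous system in Eq.~\eqref{sys:autonomous}. Since $\Omega$ is open (as noted after Eq.~\eqref{def:AE-mapping}) and $x_0 \in \Omega$, it suffices to show that the vector field $F(x) = (I+\Lambda_\theta(x)A)^{-1}x - x$ is locally Lipschitz on $\Omega$; then the theorem yields a unique $C^1$ solution $x:[0,t_0]\to \HCal$ for some $t_0 > 0$ that remains in $\Omega$ on that interval. The regularity statements of the theorem then come for free: $x(\cdot)$ is $C^1$ because $\dot x = F(x)$ is continuous, and $\lambda(\cdot) = \Lambda_\theta(x(\cdot))$ inherits local Lipschitz continuity from the composition of a locally Lipschitz $\Lambda_\theta$ with a Lipschitz $x$.

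The core of the argument is a local Lipschitz estimate on the closed-loop law $\Lambda_\theta$. Fix any $\bar x \in \Omega$ and let $\bar\lambda = \Lambda_\theta(\bar x) > 0$. Because $\bar x \notin A^{-1}(0)$ and $A^{-1}(0)$ is closed, I can choose $\delta > 0$ and a compact interval $[\lambda_-,\lambda_+] \subset (0,\infty)$ such that $\BB_\delta(\bar x) \subset \Omega$ and $\Lambda_\theta(\BB_\delta(\bar x)) \subset [\lambda_-,\lambda_+]$; the latter will follow from the quantitative monotonicity below. For $x_1,x_2 \in \BB_\delta(\bar x)$ with $\lambda_i = \Lambda_\theta(x_i)$ and (WLOG) $\lambda_1 \leq \lambda_2$, the identity $\varphi(\lambda_1,x_1) = \varphi(\lambda_2,x_2) = \theta^{1/(p-1)}$ combined with Lemma~\ref{Lemma:AE-mapping-Lipschitz} gives
\begin{equation*}
\varphi(\lambda_2,x_2) - \varphi(\lambda_1,x_2) = \varphi(\lambda_1,x_1) - \varphi(\lambda_1,x_2) \leq \lambda_1^{1/(p-1)}\|x_1-x_2\|,
\end{equation*}
while Lemma~\ref{Lemma:AE-mapping-monotone} bounds the left-hand side from below by $\bigl[(\lambda_2/\lambda_1)^{1/(p-1)}-1\bigr]\varphi(\lambda_1,x_2)$. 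On $\BB_\delta(\bar x)$, the quantity $\varphi(\lambda_1,x_2)$ stays uniformly positive (again by Lemma~\ref{Lemma:AE-mapping-monotone} and continuity), so rearranging yields $|\lambda_1-\lambda_2| \leq C\|x_1-x_2\|$ for a constant $C$ depending only on $\bar x$. This gives local Lipschitz continuity of $\Lambda_\theta$ on $\Omega$, and incidentally confirms the boundedness assertion used to pick $[\lambda_-,\lambda_+]$.

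To pass from Lipschitzness of $\Lambda_\theta$ to that of $F$, I split
\begin{equation*}
F(x_1)-F(x_2) = \bigl[(I+\Lambda_\theta(x_1)A)^{-1}x_1 - (I+\Lambda_\theta(x_1)A)^{-1}x_2\bigr] + \bigl[(I+\Lambda_\theta(x_1)A)^{-1}x_2 - (I+\Lambda_\theta(x_2)A)^{-1}x_2\bigr] - (x_1-x_2).
\end{equation*}
The first bracket is bounded by $\|x_1-x_2\|$ since resolvents of maximal monotone operators are nonexpansive. For the second bracket I apply the standard resolvent identity, which implies Lipschitz dependence of $\lambda \mapsto (I+\lambda A)^{-1}x$ on any compact subinterval of $(0,\infty)$ for fixed $x$ (with constant proportional to $\|x - (I+\lambda A)^{-1}x\|/\lambda$, uniformly bounded on $\BB_\delta(\bar x) \times [\lambda_-,\lambda_+]$ via Eq.~\eqref{Eq:AE-main}); combined with the previous paragraph, this gives a bound of the form $C'|\Lambda_\theta(x_1)-\Lambda_\theta(x_2)| \leq C''\|x_1-x_2\|$. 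Summing the three contributions yields local Lipschitzness of $F$, and Cauchy-Lipschitz finishes the proof.

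The main obstacle is the derivation of the local Lipschitz bound on $\Lambda_\theta$, since the law is defined only implicitly through the algebraic equation~\eqref{Eq:AE-main}; getting a quantitative modulus requires the \emph{two-sided} sandwich of Lemma~\ref{Lemma:AE-mapping-monotone} (strict monotonicity alone is insufficient) together with the uniform positive lower bound on $\varphi(\lambda,x)$ over a neighborhood of $\bar x$, and it is here that the hypothesis $x_0 \in \Omega$ is critically used.
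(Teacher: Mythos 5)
Your proposal is correct and follows the same architecture as the paper: rewrite the system autonomously via $\Lambda_\theta$, show the vector field $F$ is locally Lipschitz on the open set $\Omega$, invoke Cauchy--Lipschitz, and split $F(x_1)-F(x_2)$ into a resolvent-nonexpansiveness piece plus a piece controlled by the resolvent identity $\|A_{\lambda_1}x - A_{\lambda_2}x\| \leq |1-\lambda_1/\lambda_2|\,\|x-(I+\lambda_2 A)^{-1}x\|$. The one place you genuinely diverge is the regularity of the feedback law itself. The paper proves (Lemma~\ref{Lemma:control-Lipschitz}) that the reciprocal quantity $\Gamma_\theta(x)=\Lambda_\theta(x)^{-1/(p-1)}$, extended by $0$ off $\Omega$, is \emph{globally} Lipschitz on all of $\HCal$ with the explicit constant $\theta^{-1/(p-1)}$, and then converts this into a local Lipschitz bound on $F$ using the a priori two-sided bounds on $\Gamma_\theta$ over a small ball. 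You instead extract a \emph{local} Lipschitz modulus for $\Lambda_\theta$ directly from the sandwich $\varphi(\lambda_2,x)\geq(\lambda_2/\lambda_1)^{1/(p-1)}\varphi(\lambda_1,x)$ of Lemma~\ref{Lemma:AE-mapping-monotone} combined with the $x$-Lipschitzness of Lemma~\ref{Lemma:AE-mapping-Lipschitz}. Both routes are valid; the paper's buys a clean global statement (reused later, e.g.\ in Lemma~\ref{Lemma:control-first}) at the cost of a separate appendix lemma, while yours is self-contained from the two algebraic-equation lemmas. One caution about your version: the uniform positivity of $\varphi(\lambda_1,x_2)$ and the mean-value step turning $(\lambda_2/\lambda_1)^{1/(p-1)}-1$ into $c\,|\lambda_2-\lambda_1|$ both presuppose the inclusion $\Lambda_\theta(\BB_\delta(\bar x))\subseteq[\lambda_-,\lambda_+]$, so you should establish that inclusion \emph{first} (it follows from the same displayed inequality specialized to $x_1=\bar x$, together with $\varphi(\bar\lambda,x)\geq\theta^{1/(p-1)}-\bar\lambda^{1/(p-1)}\delta$), rather than deriving it ``incidentally'' afterward. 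Finally, your argument is written entirely in terms of exponents $1/(p-1)$ and so silently assumes $p\geq 2$; the theorem also covers $p=1$, where $\varphi$ and $\Lambda_\theta$ are not defined, $\lambda(t)\equiv\theta$, and $F$ is trivially globally Lipschitz --- a one-line case the paper handles separately and you should too.
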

A standard approach for proving the existence and uniqueness of a local solution is via appeal to the Cauchy-Lipschitz theorem~\citep[Theorem I.3.1]{Coddington-1955-Theory}. This theorem requires that the vector field $F(\cdot)$ is Lipschitz continuous, which is not immediate in our case due to the appearance of $(I + \Lambda_\theta(x)A)^{-1}$. In order to avail ourselves directly of the Cauchy-Lipschitz theorem, the first step is to study the properties of the function $\Lambda_\theta(x)$.  We have the following lemma. 
\begin{lemma}\label{Lemma:control-Lipschitz}
Suppose that $p \geq 2$ and the function $\Gamma_\theta: \HCal \mapsto (0, +\infty)$ is given by 
\begin{equation*}
\Gamma_\theta(x) = \left(\inf\{\alpha > 0: \left\|x - (I + \alpha^{-1}A)^{-1}x\right\| \leq \alpha^{\frac{1}{p-1}} \theta^{\frac{1}{p-1}}\}\right)^{\frac{1}{p-1}}. 
\end{equation*}
Then, we have
\begin{equation*}
\Gamma_\theta(x) = \left\{
\begin{array}{ll}
\left(\tfrac{1}{\Lambda_\theta(x)}\right)^{\frac{1}{p-1}}, & \textnormal{if } x \in \Omega, \\
0, & \textnormal{otherwise}, 
\end{array}
\right. 
\end{equation*}
and $\Gamma_\theta$ itself is Lipschitz continuous with a constant $\theta^{-1/(p-1)} > 0$. 
\end{lemma}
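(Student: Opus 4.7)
The plan is to first identify $\Gamma_\theta$ on $\Omega$ with $(1/\Lambda_\theta)^{1/(p-1)}$ via a substitution that converts the defining inequality into a condition on $\varphi$, handle the degenerate case $x \notin \Omega$ separately, and then prove Lipschitz continuity through monotonicity and sensitivity properties of an auxiliary function, leveraging Lemmas~\ref{Lemma:AE-mapping-Lipschitz} and~\ref{Lemma:AE-mapping-monotone}.

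First I would rewrite the defining inequality. Substituting $\lambda = \alpha^{-1}$, the condition $\|x - (I+\alpha^{-1}A)^{-1}x\| \leq \alpha^{1/(p-1)}\theta^{1/(p-1)}$ is equivalent to $\varphi(\lambda, x) \leq \theta^{1/(p-1)}$. For $x \in \Omega$, Proposition~\ref{Prop:AE-mapping-all} guarantees that $\varphi(\cdot, x)$ is continuous, strictly increasing and surjective onto $[0, +\infty)$, so the admissible $\lambda$'s form $(0, \Lambda_\theta(x)]$. Translating back, $\alpha \geq 1/\Lambda_\theta(x)$ and the infimum is attained, which delivers $\Gamma_\theta(x) = (1/\Lambda_\theta(x))^{1/(p-1)}$. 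For $x \notin \Omega$, Lemma~\ref{Lemma:AE-mapping-monotone} gives $\varphi(\lambda, x) = 0$ for every $\lambda > 0$, equivalently $x = (I+\lambda A)^{-1}x$ for all $\lambda > 0$, so the defining inequality holds for every $\alpha > 0$ and $\Gamma_\theta(x) = 0$.

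For the Lipschitz claim, I would introduce the auxiliary function
\[
g(\beta, x) = \beta \theta^{1/(p-1)} - \|x - (I+\beta^{-(p-1)}A)^{-1}x\|,
\]
so that under the reparametrization $\beta = \alpha^{1/(p-1)}$ one has $\Gamma_\theta(x) = \inf\{\beta > 0: g(\beta, x) \geq 0\}$. Two properties of $g$ drive the estimate. First, $\beta \mapsto g(\beta, x)$ is non-decreasing, because the first summand is linear and the map $\lambda \mapsto \|x - (I+\lambda A)^{-1}x\|$ is non-decreasing (an immediate consequence of the left inequality in Lemma~\ref{Lemma:AE-mapping-monotone}, noting that $\lambda = \beta^{-(p-1)}$ shrinks as $\beta$ grows). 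Second, $x \mapsto g(\beta, x)$ is $1$-Lipschitz uniformly in $\beta$, since Lemma~\ref{Lemma:AE-mapping-Lipschitz} with $\lambda = \beta^{-(p-1)}$ gives $\big|\|x_1 - (I+\lambda A)^{-1}x_1\| - \|x_2 - (I+\lambda A)^{-1}x_2\|\big| \leq \|x_1 - x_2\|$.

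The bound then follows from a single tangent-line estimate. Fix $x_1, x_2 \in \HCal$ and set $\beta^* = \Gamma_\theta(x_2) + \theta^{-1/(p-1)}\|x_1 - x_2\|$. Since $g(\Gamma_\theta(x_2), x_2) \geq 0$ (with equality when $x_2 \in \Omega$ by continuity of $\varphi(\cdot, x_2)$, and trivially otherwise because $\|x_2 - (I+\lambda A)^{-1}x_2\| = 0$), the monotonicity property gives $g(\beta^*, x_2) \geq (\beta^* - \Gamma_\theta(x_2))\theta^{1/(p-1)} = \|x_1 - x_2\|$, after which the $1$-Lipschitz property yields $g(\beta^*, x_1) \geq g(\beta^*, x_2) - \|x_1 - x_2\| \geq 0$. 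Thus $\Gamma_\theta(x_1) \leq \beta^* = \Gamma_\theta(x_2) + \theta^{-1/(p-1)}\|x_1 - x_2\|$, and swapping $x_1$ with $x_2$ completes the Lipschitz bound. The delicate point is calibrating this tangent argument: the slope of the linear part of $g$ is exactly $\theta^{1/(p-1)}$, and translating by $\theta^{-1/(p-1)}\|x_1 - x_2\|$ purchases exactly the margin needed to absorb the $1$-Lipschitz perturbation in $x$, which is what produces the stated constant $\theta^{-1/(p-1)}$.
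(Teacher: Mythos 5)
Your proof is correct and follows essentially the same route as the paper's: the identification of $\Gamma_\theta$ with $(1/\Lambda_\theta)^{1/(p-1)}$ on $\Omega$ (and $0$ off $\Omega$) via the monotonicity of $\varphi(\cdot,x)$, and the Lipschitz bound via the same shift-by-$\theta^{-1/(p-1)}\|x_1-x_2\|$ argument that combines the $1$-Lipschitzness of $x \mapsto \|x-(I+\lambda A)^{-1}x\|$ (Lemma~\ref{Lemma:AE-mapping-Lipschitz}) with the monotonicity in the parameter (Lemma~\ref{Lemma:AE-mapping-monotone}). The only cosmetic difference is that you package the estimate through the auxiliary function $g$ evaluated at the attained infimum $\beta=\Gamma_\theta(x_2)$, whereas the paper runs the same estimate at an arbitrary admissible $\bar\alpha$ and takes the infimum at the end.
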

\paragraph{Proof of Theorem~\ref{Theorem:Local-Existence-Uniquess}:} For simplicity,  we define $A_\theta = I - (I + \theta A)^{-1}$.  In what follows,  we first prove that $F: \Omega \mapsto \HCal$ defined in Eq.~\eqref{sys:vector-field} is locally Lipschitz continuous case by case. 

\textit{Case of $p=1$:} The algebraic equation in Eq.~\eqref{sys:choice-feedback} implies that $\lambda(\cdot)$ is a constant function such that $\lambda(t) \equiv \theta$ for all $t \geq 0$.  Then, by the definition of $F$ in Eq.~\eqref{sys:vector-field}, we have
\begin{equation*}
F(x) = (I + \theta A)^{-1}x - x.  
\end{equation*}
By the definition of $A_\lambda$, we have $\|F(x_1) - F(x_2)\| = \|A_\theta x_1 - A_\theta x_2\|$. It is straightforward to derive that $A_\theta$ is 1-Lipschitz continuous (see the proof of Lemma~\ref{Lemma:AE-mapping-Lipschitz}). Putting these pieces together yields the desired result. 

\textit{Case of $p \geq 2$:} Taking $x_0 \in \Omega$ and $0 < \delta < (\frac{\theta}{\Lambda_\theta(x_0)})^{1/(p-1)}$, we have $B_\delta(x_0, \delta) \subseteq \Omega$ since $\Omega$ is open. For any $x \in B_\delta(x_0, \delta)$,  Lemma~\ref{Lemma:control-Lipschitz} implies
\begin{equation*}
\left|\left(\tfrac{1}{\Lambda_\theta(x)}\right)^{\frac{1}{p-1}} - \left(\tfrac{1}{\Lambda_\theta(x_0)}\right)^{\frac{1}{p-1}}\right| = |\Gamma_\theta(x) - \Gamma_\theta(x_0)| \leq \theta^{-\frac{1}{p-1}}\|x - x_0\| \leq \delta\theta^{-\frac{1}{p-1}}. 
\end{equation*}
In view of the choice of $\delta > 0$ and the definition of $\lambda_0$,  we have
\begin{equation}\label{def:control_bound}
0 < \left(\tfrac{1}{\Lambda_\theta(x_0)}\right)^{\frac{1}{p-1}} - \left(\tfrac{\delta^{p-1}}{\theta}\right)^{\frac{1}{p-1}} \leq \left(\tfrac{1}{\Lambda_\theta(x)}\right)^{\frac{1}{p-1}} = \Gamma_\theta(x) \leq \left(\tfrac{1}{\Lambda_\theta(x_0)}\right)^{\frac{1}{p-1}} + \left(\tfrac{\delta^{p-1}}{\theta}\right)^{\frac{1}{p-1}}. 
\end{equation}
Taking $x_1, x_2 \in B_\delta(x_0, \delta) \subseteq \Omega$, we let $\lambda_1 = \Lambda_\theta(x_1)$ and $\lambda_2 = \Lambda_\theta(x_2)$.  Then, we have
\begin{equation}\label{inequality:local-existence-first}
\|F(x_1) - F(x_2)\| = \|A_{\lambda_1} x_1 - A_{\lambda_2} x_2\| \leq \|A_{\lambda_1} x_1 - A_{\lambda_1} x_2\| + \|A_{\lambda_1} x_2 - A_{\lambda_2} x_2\|. 
\end{equation}
By the definition of $\lambda_1$, we obtain that $\lambda_1 > 0$ and $A_{\lambda_1}$ is 1-Lipschitz continuous. This implies
\begin{equation}\label{inequality:local-existence-second}
\|A_{\lambda_1} x_1 - A_{\lambda_1} x_2\| \leq \|x_1 - x_2\|. 
\end{equation}
Further, we obtain from the proof of~\citet[Lemma~A.4]{Attouch-2019-Convergence} that 
\begin{equation*}
\|A_{\lambda_1} x_2 - A_{\lambda_2} x_2\| \leq \left|1 - \tfrac{\lambda_1}{\lambda_2}\right|\|x_2 - (I + \lambda_2 A)^{-1}x_2)\|. 
\end{equation*}
This together with the definition of $\lambda_2$ and $\Lambda_\theta(\cdot)$ yields
\begin{equation}\label{inequality:local-existence-third}
\|A_{\lambda_1} x_2 - A_{\lambda_2} x_2\| \leq \left|1 - \tfrac{\lambda_1}{\lambda_2}\right| \left(\tfrac{\theta}{\lambda_2}\right)^{\frac{1}{p-1}}. 
\end{equation}
Plugging Eq.~\eqref{inequality:local-existence-second} and Eq.~\eqref{inequality:local-existence-third} into Eq.~\eqref{inequality:local-existence-first} and using the definition of $\lambda_1$ and $\lambda_2$, we have
\begin{equation}\label{inequality:local-existence-fourth}
\|F(x_1) - F(x_2)\| \leq \|x_1 - x_2\| + \left|1 - \tfrac{\Lambda_\theta(x_1)}{\Lambda_\theta(x_2)}\right| \left(\tfrac{\theta}{\Lambda_\theta(x_2)}\right)^{\frac{1}{p-1}}. 
\end{equation}
Using $\Gamma_\theta(x) = (\frac{1}{\Lambda_\theta(x)})^{\frac{1}{p-1}}$ for all $x \in \Omega$ and the Lipschitz continuity of $\Gamma_\theta$ (cf. Lemma~\ref{Lemma:control-Lipschitz}), we have
\begin{eqnarray*}
\lefteqn{\left|1 - \tfrac{\Lambda_\theta(x_1)}{\Lambda_\theta(x_2)}\right| \left(\tfrac{1}{\Lambda_\theta(x_2)}\right)^{\frac{1}{p-1}} = \tfrac{\Gamma_\theta(x_2)}{(\Gamma_\theta(x_1))^{p-1}}\left|(\Gamma_\theta(x_1))^{p-1} - (\Gamma_\theta(x_2))^{p-1}\right|} \\ 
& = & \tfrac{\Gamma_\theta(x_2)}{(\Gamma_\theta(x_1))^{p-1}}\left|\Gamma_\theta(x_1) - \Gamma_\theta(x_2)\right|\left(\sum_{i=1}^{p-1} (\Gamma_\theta(x_1))^{p-1-i}(\Gamma_\theta(x_2))^{i-1}\right) \\ 
& \leq & \tfrac{\Gamma_\theta(x_2)}{(\Gamma_\theta(x_1))^{p-1}}\left(\sum_{i=1}^{p-1} (\Gamma_\theta(x_1))^{p-1-i}(\Gamma_\theta(x_2))^{i-1}\right)\theta^{-\frac{1}{p-1}}\|x_1 - x_2\|. 
\end{eqnarray*}
Plugging this inequality into Eq.~\eqref{inequality:local-existence-fourth} yields 
\begin{equation*}
\|F(x_1) - F(x_2)\| \leq \left(1 + \tfrac{\Gamma_\theta(x_2)}{(\Gamma_\theta(x_1))^{p-1}}\left(\sum_{i=1}^{p-1} (\Gamma_\theta(x_1))^{p-1-i}(\Gamma_\theta(x_2))^{i-1}\right)\right)\|x_1 - x_2\|. 
\end{equation*}
Since $x_1, x_2 \in B_\delta(x_0, \delta)$, Eq.~\eqref{def:control_bound} implies
\begin{equation*}
0 < \left(\tfrac{1}{\Lambda_\theta(x_0)}\right)^{\frac{1}{p-1}} - \left(\tfrac{\delta^{p-1}}{\theta}\right)^{\frac{1}{p-1}} \leq \Gamma_\theta(x_i) \leq \left(\tfrac{1}{\Lambda_\theta(x_0)}\right)^{\frac{1}{p-1}} + \left(\tfrac{\delta^{p-1}}{\theta}\right)^{\frac{1}{p-1}}, \quad \textnormal{for all } i = 1, 2. 
\end{equation*}
Therefore, we conclude that 
\begin{equation*}
\|F(x_1) - F(x_2)\| \leq C\|x_1 - x_2\|, 
\end{equation*}
where $C > 0$ is a constant that is independent of the choice of $x_1$ and $x_2$ but only depends on the value of $\delta$, $\theta$, $p$ and $\Lambda_\theta(x_0)$.  This proves the claim. 

We are ready to prove main results.  Indeed, by the Cauchy-Lipschitz theorem (local version), for any $x_0 \in \Omega$, there exists a unique local solution $x: [0, t_1] \mapsto \HCal$ of the autonomous system in Eq.~\eqref{sys:autonomous} and Eq.~\eqref{sys:vector-field} for some $t_1 > 0$.  Thus, there exists a unique local solution, $(x, \lambda): [0, t_1] \mapsto \HCal \times (0, +\infty)$, of the closed-loop control system in Eq.~\eqref{sys:general} and Eq.~\eqref{sys:choice-feedback} with $\lambda(t) = \Lambda_\theta(x(t))$.  By Cauchy-Lipschitz theorem, we have that $x(\cdot)$ is continuously differentiable and $x(t) \in \Omega$ for all $t \in [0, t_1]$. For the case of $p=1$, Eq.~\eqref{sys:choice-feedback} implies that $\lambda(\cdot)$ is a constant function and thus locally Lipschitz continuous.  For the case of $p \geq 2$, Lemma~\ref{Lemma:control-Lipschitz} together with $x(t) \in \Omega$ for all $t \in [0, t_1]$ implies
\begin{equation*}
\lambda(t) = \Lambda_\theta(x(t)) = \left(\tfrac{1}{\Gamma_\theta(x(t))}\right)^{\frac{1}{p-1}} \quad \textnormal{for all } t \in [0, t_1]. 
\end{equation*}
Since $\Gamma_\theta(x)$ is Lipschitz continuous in $x$, we obtain that $\lambda(\cdot)$ is Lipschitz continuous on $[0, t_2]$ for some sufficiently small $t_2 > 0$.  Then, by taking $t_0 = \min\{t_1, t_2\} > 0$, we achieve the desired results.  This completes the proof. 

\subsection{Existence and uniqueness of a global solution}
Our main theorem on the existence and uniqueness of a global solution is summarized as follows. 
\begin{theorem}\label{Theorem:Global-Existence-Uniquess} 
The closed-loop control system in Eq.~\eqref{sys:general} and Eq.~\eqref{sys:choice-feedback} has a unique global solution, $(x, \lambda): [0, +\infty) \mapsto \HCal \times (0, +\infty)$. Moreover, $x(\cdot)$ is continuously differentiable and $\lambda(\cdot)$ is locally Lipschitz continuous. If $p \geq 2$, we have 
\begin{equation*}
\|x(t) - (I + \lambda(t)A)^{-1}x(t)\| \geq \|x(0) - (I + \lambda(0) A)^{-1}x(0)\|e^{-t}, \quad \textnormal{for all } t \geq 0. 
\end{equation*}
\end{theorem}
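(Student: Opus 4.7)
The plan is to bootstrap the local solution produced by Theorem~\ref{Theorem:Local-Existence-Uniquess} into a global one by ruling out finite-time escape from $\Omega$, with $p=1$ and $p\geq 2$ handled separately. For $p=1$, Eq.~\eqref{sys:choice-feedback} forces $\lambda(t)\equiv\theta$ and the autonomous system becomes $\dot x=(I+\theta A)^{-1}x-x$, whose right-hand side is globally Lipschitz on $\HCal$ (since $I-(I+\theta A)^{-1}$ is firmly nonexpansive); the global Cauchy-Lipschitz theorem then delivers a unique global solution, and the lower-bound estimate is vacuous.

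For $p\geq 2$, I let $[0,T_{\max})$ be the maximal interval of existence furnished by Theorem~\ref{Theorem:Local-Existence-Uniquess} and argue $T_{\max}=+\infty$ by contradiction. Two a-priori bounds suffice. First, $x(\cdot)$ is bounded: picking any $x^\ast\in A^{-1}(0)$, the inclusion $-\dot x(t)/\lambda(t)\in A(x(t)+\dot x(t))$ combined with monotonicity of $A$ against $0\in Ax^\ast$ gives
\begin{equation*}
\tfrac{d}{dt}\|x(t)-x^\ast\|^2 \;=\; 2\langle\dot x(t),x(t)-x^\ast\rangle \;\leq\; -2\|\dot x(t)\|^2 \;\leq\; 0.
\end{equation*}
Second, and crucially, the claimed exponential lower bound $\|\dot x(t)\|\geq\|\dot x(0)\|e^{-t}$ forces $\lambda(t)=\theta/\|\dot x(t)\|^{p-1}$ to remain uniformly bounded on $[0,T_{\max})$ and keeps $x(T_{\max})$ inside $\Omega$; feeding $x(T_{\max})$ back into Theorem~\ref{Theorem:Local-Existence-Uniquess} then extends the solution past $T_{\max}$, contradicting maximality. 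The asserted regularity transfers from the local statement.

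The heart of the argument is the derivation of the exponential lower bound. Set $u=x+\dot x=(I+\lambda A)^{-1}x$, so that $-\dot x/\lambda\in Au$. Since $\dot x=F(x)$ with $F$ locally Lipschitz, $\dot x$ is itself locally Lipschitz in $t$, hence $\ddot x$ exists almost everywhere. Applying monotonicity of $A$ at two times $s<t$ and dividing by $(t-s)^2$ before passing $s\to t$ yields, for a.e.\ $t$,
\begin{equation*}
\big\langle \tfrac{d}{dt}(\dot x/\lambda),\,\dot x+\ddot x\big\rangle\;\leq\; 0.
\end{equation*}
Expanding, multiplying by $\lambda$, and substituting $\dot\lambda/\lambda=-(p-1)\langle\dot x,\ddot x\rangle/\|\dot x\|^2$ (obtained by differentiating the algebraic constraint $\lambda\|\dot x\|^{p-1}=\theta$) produces
\begin{equation*}
p\,\langle\dot x,\ddot x\rangle+\|\ddot x\|^2+(p-1)\tfrac{\langle\dot x,\ddot x\rangle^2}{\|\dot x\|^2}\;\leq\; 0.
\end{equation*}
Setting $\sigma=\langle\dot x,\ddot x\rangle/\|\dot x\|^2$ and applying the Cauchy-Schwarz bound $\|\ddot x\|^2/\|\dot x\|^2\geq\sigma^2$, this collapses to $\sigma(1+\sigma)\leq 0$, i.e., $\tfrac{d}{dt}\|\dot x\|^2\geq -2\|\dot x\|^2$ a.e., and Gronwall's lemma delivers the claim.

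The main obstacle is making this last computation rigorous: $\ddot x$ only exists almost everywhere, so one must justify the $s\to t$ limit of the monotonicity inequality at a.e.\ $t$, verify the algebraic manipulations in the a.e.\ sense (in particular legality of dividing by $\|\dot x\|^2$, which holds because $x(t)\in\Omega$ throughout the local interval), and apply Gronwall to $\|\dot x\|^2$ which is absolutely continuous on compact subintervals of $[0,T_{\max})$. Once the exponential lower bound is in place, the contradiction argument extending past $T_{\max}$ and the transfer of regularity are routine.
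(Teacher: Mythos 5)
Your proposal is correct, but it reaches the key differential inequality by a genuinely different route than the paper. The paper's engine is the pair of first-order lemmas $|\dot{\lambda}(t)| \leq (p-1)\lambda(t)$ (Lemma~\ref{Lemma:control-first}, proved from the $\theta^{-1/(p-1)}$-Lipschitz continuity of $\Gamma_\theta$) and $\dot{\lambda}(t) \geq 0$ (Lemma~\ref{Lemma:control-second}, proved by a difference-quotient argument that combines the resolvent identity $y(t) = (I + (1-h)\lambda(t)A)^{-1}(x(t)+h\dot{x}(t))$ with the monotonicity bounds on $\varphi(\cdot,x)$ from Lemma~\ref{Lemma:AE-mapping-monotone}); these yield $\lambda(0) \leq \lambda(t) \leq \lambda(0)e^{(p-1)t}$, and the exponential lower bound on $\|\dot{x}\|$ is then read off from $\|\dot{x}(t)\| = (\theta/\lambda(t))^{1/(p-1)}$. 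You instead obtain both facts in a single second-order computation: applying monotonicity of $A$ to $-\dot{x}/\lambda \in A(x+\dot{x})$ at two nearby times and passing to the limit gives, with $\sigma = \langle\dot{x},\ddot{x}\rangle/\|\dot{x}\|^2$, the constraint $\sigma(1+\sigma) \leq 0$, i.e.\ $-1 \leq \sigma \leq 0$, which via $\dot{\lambda}/\lambda = -(p-1)\sigma$ is exactly equivalent to $0 \leq \dot{\lambda} \leq (p-1)\lambda$ and via Gr\"{o}nwall gives $\|\dot{x}(t)\| \geq \|\dot{x}(0)\|e^{-t}$ directly. The trade-off is that your argument needs $\ddot{x}$ to exist a.e.; this does hold — $F$ is locally Lipschitz, so $\dot{x} = F(x)$ is locally Lipschitz in $t$, and Lipschitz curves into a Hilbert space are a.e.\ differentiable by the Radon--Nikodym property — but the paper's Lemma~\ref{Lemma:control-second} deliberately avoids second derivatives by working only with $\liminf_{h\to 0^+}(\lambda(t+h)-\lambda(t))/h$ and the first-order expansion $x(t+h) = x(t) + h\dot{x}(t) + o(h)$. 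Your unified computation is arguably cleaner and makes the geometric content ($\sigma \in [-1,0]$) transparent; the paper's is more elementary in its regularity requirements. The remaining steps — the maximal-solution setup, the conclusion that $\bar{x} = \lim_{t\to T_{\max}} x(t)$ exists and lies in $\Omega$ because $\|\bar{x} - (I+\bar{\lambda}A)^{-1}\bar{x}\| = (\theta/\bar{\lambda})^{1/(p-1)} > 0$, and the extension contradiction — coincide with the paper's, though you should spell out the continuity-of-the-resolvent step that places $\bar{x}$ in $\Omega$ rather than asserting it.
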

\begin{remark}
Theorem~\ref{Theorem:Global-Existence-Uniquess} demonstrates that $x(t) - (I + \lambda(t)A)^{-1}x(t) \neq 0$ for all $t \geq 0$.  After some straightforward calculations, it is clear that the aforementioned argument is equivalent to the assertion that the orbit $x(\cdot)$ stays in $\Omega$. In other words, if $x_0 \in \Omega$, our closed-loop control system in Eq.~\eqref{sys:general} and Eq.~\eqref{sys:choice-feedback} is not stabilized in finite time, which helps clarify the asymptotic convergence behavior of many discrete-time algorithms to a solution of monotone inclusion problems (see~\citet{Monteiro-2010-Complexity, Monteiro-2012-Iteration} for examples). 
\end{remark}
\begin{remark}
The feedback law $\lambda(\cdot)$, which we will show satisfies $\lambda(t) \rightarrow +\infty$ as $t \rightarrow +\infty$, links to $\|\dot{x}(\cdot)\| = \|x(\cdot) - (I + \lambda(\cdot)A)^{-1}x(\cdot)\|$ via Eq.~\eqref{sys:choice-feedback}. Intuitively, if $\lambda(\cdot)$ changes dramatically, we can not globalize a local solution using classical arguments.  In the Levenberg-Marquardt regularized systems,~\citet{Attouch-2011-Continuous} resolved this issue by assuming that $\lambda(\cdot)$ is absolutely continuous on any finite bounded interval and proving that $\lambda(t) \leq \lambda(0)e^{ct}$ holds true for some constant $c > 0$. However, $\lambda(\cdot)$ is not a given datum in our closed-loop control system but an emergent component of the evolution dynamics. As such, it is preferable to prove that $\lambda(t) \leq \lambda(0)e^{ct}$ hold true without imposing any condition, as done in the works~\citep{Attouch-2013-Global, Attouch-2016-Dynamic}. Recently,~\citet{Lin-2022-Control} have studied a closed-loop control system which characterized accelerated $p^\textnormal{th}$-order tensor algorithms for convex optimization and established the global existence and uniqueness results under the condition used in~\citet{Attouch-2011-Continuous}. They also clarified why this condition is necessary and considered it an open problem to remove it.  In the subsequent analysis,  we prove that $|\dot{\lambda}(t)| \leq (p-1)\lambda(t)$ holds for our system in Eq.~\eqref{sys:general} and Eq.~\eqref{sys:choice-feedback} without imposing any condition, demonstrating that acceleration in monotone inclusion problems is  intrinsically different from that in convex optimization.  
\end{remark}

We provide two lemmas that characterize some further properties of the feedback law $\lambda(\cdot)$. 
\begin{lemma}\label{Lemma:control-first}
Suppose that $(x, \lambda): [0, t_0] \mapsto \HCal \times (0, +\infty)$ is a solution of the closed-loop control system in Eq.~\eqref{sys:general} and Eq.~\eqref{sys:choice-feedback}. Then,  we have $|\dot{\lambda}(t)| \leq (p-1)\lambda(t)$ for almost all $t \in [0, t_0]$. 
\end{lemma}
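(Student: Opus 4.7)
The plan is to reduce the inequality to a chain-rule computation based on Lemma~\ref{Lemma:control-Lipschitz}. For $p=1$, the control law $\lambda(t)\|\dot x(t)\|^{p-1}=\theta$ forces $\lambda(\cdot) \equiv \theta$, so $\dot\lambda(t) = 0$ and the claim is trivial. I will therefore focus on $p \geq 2$.

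For $p\ge 2$, the local solution from Theorem~\ref{Theorem:Local-Existence-Uniquess} stays in $\Omega$, so Lemma~\ref{Lemma:control-Lipschitz} yields the identity
\begin{equation*}
\lambda(t) \;=\; \Lambda_\theta(x(t)) \;=\; \Gamma_\theta(x(t))^{-(p-1)}, \qquad t \in [0,t_0].
\end{equation*}
Since $\Gamma_\theta$ is $\theta^{-1/(p-1)}$-Lipschitz on $\HCal$ (Lemma~\ref{Lemma:control-Lipschitz}) and $x(\cdot)$ is continuously differentiable on $[0,t_0]$ (Theorem~\ref{Theorem:Local-Existence-Uniquess}), the composition $t\mapsto\Gamma_\theta(x(t))$ is locally Lipschitz, hence differentiable almost everywhere with
\begin{equation*}
\left|\tfrac{d}{dt}\Gamma_\theta(x(t))\right| \;\le\; \theta^{-\frac{1}{p-1}}\|\dot x(t)\|, \quad \text{a.e. } t \in [0,t_0].
\end{equation*}
Consequently $\lambda(\cdot)$ is also locally Lipschitz, and applying the chain rule to $\lambda(t) = \Gamma_\theta(x(t))^{-(p-1)}$ gives, for a.e. $t$,
\begin{equation*}
\dot\lambda(t) \;=\; -(p-1)\,\Gamma_\theta(x(t))^{-p}\,\tfrac{d}{dt}\Gamma_\theta(x(t)).
\end{equation*}

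The final step is a substitution. Using $\Gamma_\theta(x(t)) = \lambda(t)^{-1/(p-1)}$, so that $\Gamma_\theta(x(t))^{-p}=\lambda(t)^{p/(p-1)}$, together with the feedback law $\|\dot x(t)\|=(\theta/\lambda(t))^{1/(p-1)}$, I get
\begin{equation*}
|\dot\lambda(t)| \;\le\; (p-1)\,\lambda(t)^{\frac{p}{p-1}}\,\theta^{-\frac{1}{p-1}}\,\|\dot x(t)\|
\;=\; (p-1)\,\lambda(t)^{\frac{p}{p-1}}\,\theta^{-\frac{1}{p-1}}\cdot\theta^{\frac{1}{p-1}}\lambda(t)^{-\frac{1}{p-1}}
\;=\; (p-1)\,\lambda(t),
\end{equation*}
which is precisely the claimed bound. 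The only delicate point in this argument is the chain rule: $\Gamma_\theta$ is merely Lipschitz rather than $C^1$, so $\Gamma_\theta\circ x$ need not be classically differentiable. However, as it is locally Lipschitz on $[0,t_0]$ it is differentiable almost everywhere (Rademacher's theorem on the real line), and the Lipschitz estimate for $\Gamma_\theta$ controls its derivative by $\theta^{-1/(p-1)}\|\dot x(t)\|$ wherever the latter exists, which is exactly what the ``almost all $t$'' conclusion in the lemma permits.
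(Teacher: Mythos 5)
Your proposal is correct and takes essentially the same route as the paper: both arguments rest on the Lipschitz bound for $\Gamma_\theta$ from Lemma~\ref{Lemma:control-Lipschitz} together with the identity $\|\dot{x}(t)\| = (\theta/\lambda(t))^{1/(p-1)}$ coming from Eq.~\eqref{sys:general} and Eq.~\eqref{sys:choice-feedback}. The only difference is presentational — you invoke Rademacher and the chain rule on $\lambda = (\Gamma_\theta\circ x)^{-(p-1)}$, while the paper bounds the $\limsup$ of difference quotients of $\lambda^{1/(p-1)}$ directly — and the delicate point you flag is handled correctly either way.
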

\begin{lemma}\label{Lemma:control-second}
Suppose that $(x, \lambda): [0, t_0] \mapsto \HCal \times (0, +\infty)$ is a solution of the closed-loop control system in Eq.~\eqref{sys:general} and Eq.~\eqref{sys:choice-feedback}. Then,  we have that $\lambda(\cdot)$ is nondecreasing. 
\end{lemma}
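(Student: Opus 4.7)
The plan is to prove $\dot\lambda(t)\ge 0$ almost everywhere; since Lemma~\ref{Lemma:control-first} tells us $\lambda$ is locally Lipschitz (hence absolutely continuous), this implies $\lambda$ is nondecreasing on $[0,t_0]$. The case $p=1$ is immediate: Eq.~\eqref{sys:choice-feedback} forces $\lambda(t)\equiv\theta$. For $p\ge 2$, I work at a generic time $t$ where both $\ddot x(t)$ and $\dot\lambda(t)$ exist; such $t$ form a set of full measure, because $\dot x(t)=F(x(t))$ with $F$ locally Lipschitz (Theorem~\ref{Theorem:Local-Existence-Uniquess}) makes $\dot x$ locally Lipschitz in $t$, while $\lambda$ is locally Lipschitz by Lemma~\ref{Lemma:control-first}.

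Introduce $y(t):=(I+\lambda(t)A)^{-1}x(t)$ and $z(t):=(x(t)-y(t))/\lambda(t)\in Ay(t)$. By Eq.~\eqref{sys:general}, $y(t)=x(t)+\dot x(t)$ and $z(t)=-\dot x(t)/\lambda(t)$, so at a point of second-order regularity,
\begin{equation*}
\dot y(t)=\dot x(t)+\ddot x(t),\qquad \dot z(t)=-\frac{\ddot x(t)}{\lambda(t)}+\frac{\dot\lambda(t)}{\lambda(t)^2}\dot x(t).
\end{equation*}
Maximal monotonicity of $A$ applied to the pairs $(y(t),z(t))$ and $(y(t+h),z(t+h))$ gives $\langle z(t+h)-z(t),\,y(t+h)-y(t)\rangle\ge 0$ for all small $h>0$. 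A first-order Taylor expansion of $y$ and $z$ at $t$, division by $h^2$, and letting $h\to 0^+$ yield $\langle\dot z(t),\dot y(t)\rangle\ge 0$. Substituting the formulas above and multiplying by $\lambda(t)^2$ rewrites this as
\begin{equation*}
(\dot\lambda-\lambda)\langle\dot x,\ddot x\rangle+\dot\lambda\|\dot x\|^2-\lambda\|\ddot x\|^2\ \ge\ 0.
\end{equation*}

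To eliminate the cross term, I differentiate the algebraic law $\lambda(t)\|\dot x(t)\|^{p-1}=\theta$ in time (valid a.e., since $\lambda$ and $\|\dot x\|$ are locally Lipschitz and $\|\dot x(t)\|>0$ by Theorem~\ref{Theorem:Global-Existence-Uniquess}), obtaining
\begin{equation*}
\langle\dot x(t),\ddot x(t)\rangle=-\frac{\dot\lambda(t)}{(p-1)\lambda(t)}\|\dot x(t)\|^2.
\end{equation*}
Plugging this into the previous inequality and collecting terms reduces it to
\begin{equation*}
\dot\lambda(t)\bigl(p\lambda(t)-\dot\lambda(t)\bigr)\|\dot x(t)\|^2\ \ge\ (p-1)\lambda(t)^2\|\ddot x(t)\|^2\ \ge\ 0.
\end{equation*}
Since $\|\dot x(t)\|>0$, this forces $\dot\lambda(p\lambda-\dot\lambda)\ge 0$; but if $\dot\lambda(t)<0$ then $p\lambda(t)-\dot\lambda(t)>p\lambda(t)>0$, making the product strictly negative, a contradiction. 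Hence $\dot\lambda(t)\ge 0$ at every $t$ of second-order regularity, and the conclusion follows.

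The step I expect to require the most care is the passage from the discrete monotonicity inequality to its pointwise form $\langle\dot z,\dot y\rangle\ge 0$: this is routine at times where $\dot z$ and $\dot y$ both exist, and rests squarely on the a.e.\ existence of $\ddot x$ and $\dot\lambda$ that Theorem~\ref{Theorem:Local-Existence-Uniquess} and Lemma~\ref{Lemma:control-first} supply. Once this regularity is secured, the remainder is algebraic and the sign analysis is elementary, using only the strict positivity of $\lambda$ (no appeal to the $|\dot\lambda|\le(p-1)\lambda$ bound is needed at the final step).
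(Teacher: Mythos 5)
Your proof is correct, but it follows a genuinely different route from the paper's. You differentiate: you secure a.e.\ existence of $\ddot{x}$ and $\dot{\lambda}$ (legitimately, since $\dot{x}=F(x(\cdot))$ is locally Lipschitz by the local Lipschitz continuity of $F$ and $\lambda$ is locally Lipschitz, and Hilbert spaces have the Radon--Nikodym property), pass the monotonicity inequality for the pairs $(y,z)\in\mathrm{gph}(A)$ to its infinitesimal form $\langle\dot{z},\dot{y}\rangle\geq 0$, differentiate the algebraic law to eliminate $\langle\dot{x},\ddot{x}\rangle$, and read off the sign of $\dot{\lambda}$ from $\dot{\lambda}(p\lambda-\dot{\lambda})\|\dot{x}\|^2\geq(p-1)\lambda^2\|\ddot{x}\|^2\geq 0$ (your algebra here checks out, and as a byproduct you also recover $\dot{\lambda}\leq p\lambda$). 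The paper instead avoids second derivatives entirely: it observes that $x(t)+h\dot{x}(t)-y(t)=(1-h)(x(t)-y(t))$, so that $y(t)=(I+(1-h)\lambda(t)A)^{-1}(x(t)+h\dot{x}(t))$, computes $\varphi((1-h)\lambda(t),x(t)+h\dot{x}(t))=(1-h)^{p/(p-1)}\theta^{1/(p-1)}$ exactly, and then uses the Lipschitz continuity of $\varphi(\lambda,\cdot)$ (Lemma~\ref{Lemma:AE-mapping-Lipschitz}) and the two-sided monotonicity bounds (Lemma~\ref{Lemma:AE-mapping-monotone}) to get $\lambda(t+h)\geq\lambda(t)/(1+\omega(h))^{p-1}$ with $\omega(h)=o(h)$, hence $\liminf_{h\to 0^+}(\lambda(t+h)-\lambda(t))/h\geq 0$ at \emph{every} $t$. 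The paper's argument buys first-order regularity requirements only and a pointwise (not merely a.e.) lower Dini derivative bound; yours buys a shorter, more mechanical computation once the second-order regularity is in place, at the cost of invoking that regularity. Two small citation points: the local Lipschitz continuity of $\lambda$ is established in Theorem~\ref{Theorem:Local-Existence-Uniquess} rather than in Lemma~\ref{Lemma:control-first} (which presupposes it), and plain monotonicity of $A$, not maximality, is what your inequality uses.
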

\paragraph{Proof of Theorem~\ref{Theorem:Global-Existence-Uniquess}:} We are ready to prove our main result on the existence and uniqueness of a global solution. In particular,  for the case of $p=1$, it is clear that $\lambda(t) = \theta$ is a constant function and the vector field $F: \Omega \mapsto \HCal$ is in fact global Lipschitz continuous (see the proof of Theorem~\ref{Theorem:Local-Existence-Uniquess}). Thus,  by the Cauchy-Lipschitz theorem (global version), we achieve the desired result. 

For the case of $p \geq 2$, let us consider a maximal solution of the closed-loop control system in Eq.~\eqref{sys:general} and Eq.~\eqref{sys:choice-feedback} as follows,  
\begin{equation*}
(x, \lambda): [0, T_{\max}) \mapsto \Omega \times (0, +\infty).  
\end{equation*}
Using the existence and uniqueness of a local solution (see Theorem~\ref{Theorem:Local-Existence-Uniquess}) and a classical argument,  we obtain that the aforementioned maximal solution must exist.  Further, by using Lemma~\ref{Lemma:control-first} and~\ref{Lemma:control-second}, we obtain that $\lambda(\cdot)$ is nondecreasing with $0 \leq \dot{\lambda}(t) \leq (p-1)\lambda(t)$ for almost all $t \in [0, T_{\max})$. 

It remains to show that the maximal solution is a global solution; that is, $T_{\max} = +\infty$. Indeed, the property of $\lambda$ guarantees
\begin{equation}\label{inequality:global-existence-first}
0 < \lambda(0) \leq \lambda(t) \leq \lambda(0)e^{(p-1)t}. 
\end{equation}
If $T_{\max} < +\infty$,  this inequality implies that $\lambda(t) \leq \lambda(0)e^{(p-1)T_{\max}}$ for all $t \in [0, T_{\max}]$. This together with the fact that $\lambda(\cdot)$ is nondecreasing on $[0, T_{\max})$ implies that $\bar{\lambda} = \lim_{t \rightarrow T_{\max}} \lambda(t)$ exists and is finite and strictly positive.  Using Eq.~\eqref{sys:general} and Eq.~\eqref{sys:choice-feedback}, we have
\begin{equation}\label{inequality:global-existence-second}
\|\dot{x}(t)\| = \|(I + \lambda(t)A)^{-1}x(t) - x(t)\| = \left(\tfrac{\theta}{\lambda(t)}\right)^{\frac{1}{p-1}}. 
\end{equation}
Combining Eq.~\eqref{inequality:global-existence-first} and Eq.~\eqref{inequality:global-existence-second} implies that $\|\dot{x}(\cdot)\|$ is bounded on $[0, T_{\max})$. Thus, $x(\cdot)$ is Lipschitz continuous on $[0, T_{\max})$ and this implies that $\bar{x} = \lim_{t \rightarrow T_{\max}} x(t)$ exists. We claim that $\bar{x} \in \Omega$. Indeed, the function $g(\lambda, x) = \|(I + \lambda A)^{-1}x - x\|$ is continuous in $(\lambda, x)$. Since $\lambda(\cdot)$ and $x(\cdot)$ are continuous on $[0,T_{\max}]$, we have $\|(I + \lambda(t)A)^{-1}x(t) - x(t)\| \rightarrow \|(I + \bar{\lambda}A)^{-1}\bar{x} - \bar{x}\|$ as $t \rightarrow T_{\max}$. Then, Eq.~\eqref{inequality:global-existence-second} implies
\begin{equation*}
\|(I + \bar{\lambda}A)^{-1}\bar{x} - \bar{x}\| = \lim_{t \rightarrow T_{\max}} \|(I + \lambda(t)A)^{-1}x(t) - x(t)\| = \lim_{t \rightarrow T_{\max}} \left(\tfrac{\theta}{\lambda(t)}\right)^{\frac{1}{p-1}} = \left(\tfrac{\theta}{\bar{\lambda}}\right)^{\frac{1}{p-1}} > 0. 
\end{equation*}
By appeal to Theorem~\ref{Theorem:Local-Existence-Uniquess} with an initial point $\bar{x}$, we can then extend the solution to a strictly larger interval which contradicts the maximality of the aforementioned solution. 

Using Eq.~\eqref{inequality:global-existence-second} again, we have
\begin{equation*}
\|x(t) - (I + \lambda(t)A)^{-1}x(t)\| = \left(\tfrac{\lambda(0)}{\lambda(t)}\right)^{\frac{1}{p-1}}\|x(0) - (I + \lambda(0)A)^{-1}x(0)\|.
\end{equation*}
Further, it is clear that Eq.~\eqref{inequality:global-existence-first} holds true for all $t \in [0, +\infty)$. That is to say, we have $\frac{\lambda(0)}{\lambda(t)} \geq e^{-(p-1)t}$.  Putting these pieces together yields
\begin{equation*}
\|x(t) - (I + \lambda(t)A)^{-1}x(t)\| \geq e^{-t}\|x(0) - (I + \lambda(0)A)^{-1}x(0)\|,
\end{equation*}
which completes the proof. 

\subsection{Discussion}\label{subsec:Discussion}
We compare the system in Eq.~\eqref{sys:general} and Eq.~\eqref{sys:choice-feedback} to other systems for convex optimization and monotone inclusion. We also give an overview of the closed-loop control approach and the continuous-time interpretation of high-order tensor algorithms. 

\paragraph{Existing systems for optimization and inclusion problems.} In the context of optimization with a convex potential function $\Phi: \HCal \mapsto \br$, ~\citet{Polyak-1964-Some} was the first to use inertial dynamics to accelerate gradient methods.  However, the convergence rate of $O(1/t)$ he obtained is not better than the steepest descent method. A decisive step to obtain a faster convergence rate was taken by~\citet{Su-2016-Differential} who considered using \textit{asymptotically vanishing damping} for modeling Nesterov's acceleration~\citep{Nesterov-1983-Method, Guler-1992-New}, triggering a productive line of research on the dynamical systems foundations of accelerated first-order algorithms~\citep{Attouch-2016-Rate,Attouch-2017-Asymptotic,Attouch-2018-Fast, Diakonikolas-2019-Approximate,Apidopoulos-2020-Convergence, Muehlebach-2021-Optimization}. Another important ingredient for obtaining acceleration is so-called Hessian-driven damping~\citep{Alvarez-2002-Second, Attouch-2016-Fast, Lin-2022-Control, Attouch-2022-Fast, Attouch-2022-First} which originated from a variational characterization of general regularization optimization algorithms~\citep{Alvarez-1998-Dynamical}. This involved the study of Newton and Levenberg-Marquardt regularized systems as follows: 
\begin{align*}
\textbf{(Newton)} \qquad & \ddot{x}(t) + \nabla^2\Phi(x(t))\dot{x}(t) + \nabla\Phi(x(t)) = 0, \\
\textbf{(Levenberg-Marquardt)} \qquad & \lambda(t)\dot{x}(t) + \nabla^2\Phi(x(t))\dot{x}(t) + \nabla\Phi(x(t)) = 0. 
\end{align*}
These systems are well defined and admit robust asymptotic behavior~\citep{Attouch-2001-Second,Attouch-2011-Continuous}. Based on this work, ~\citet{Alvarez-2002-Second} distinguished Hessian-driven damping from continuous-time Newton dynamics and~\citet{Attouch-2016-Fast} interpreted Nesterov's acceleration in the forward-backward algorithms by combining Hessian-driven damping with asymptotically vanishing damping. The resulting dynamics takes the following general form:
\begin{equation}\label{sys:general-potential}
\ddot{x}(t) + \alpha(t)\dot{x}(t) + \beta(t)\nabla^2\Phi(x(t))\dot{x}(t) + b(t)\nabla\Phi(x(t)) = 0. 
\end{equation}    
Further work in this vein appeared in~\citet{Shi-2022-Understanding}, where Nesterov's acceleration was interpreted via multiscale limits that distinguish it from heavy ball method, and~\citet{Attouch-2022-Fast}, where time scaling was introduced with Hessian-driven damping. Unfortunately, none of the above approaches are suitable for deriving optimal accelerated versions of high-order tensor algorithms in convex smooth optimization~\citep{Monteiro-2013-Accelerated, Gasnikov-2019-Near}. Recently, \citet{Lin-2022-Control} provided an initial foray into analyzing the continuous-time dynamics of high-order tensor algorithms using the system in Eq.~\eqref{sys:general-potential} in which the tuning of $(\alpha(\cdot), \beta(\cdot), b(\cdot))$ is done in a closed loop by resolution of the algebraic equation. Their approach gives a systematic way to derive discrete-time optimal high-order tensor algorithms, further simplifying and generalizing the existing analysis in~\citet{Monteiro-2013-Accelerated} via appeal to the construction of a unified discrete-time Lyapunov function. 

The extension of the continuous-time dynamics and Lyapunov analysis from convex optimization to monotone inclusion problems has been pursued during the last two decades~\citep{Alvarez-2001-Inertial,Attouch-2011-Asymptotic,Attouch-2011-Continuous,Mainge-2013-First,Attouch-2013-Global,Attouch-2016-Dynamic,Abbas-2014-Newton,Bot-2016-Second,Attouch-2019-Convergence,Attouch-2018-Convergence,Attouch-2020-Convergence,Attouch-2020-Newton,Attouch-2021-Continuous}. In particular,~\citet{Attouch-2011-Continuous} considered a generalization of Levenberg-Marquardt regularized systems for monotone inclusion problems as follows, 
\begin{equation*}
\left\{\begin{array}{ll}
& v(t) \in Ax(t), \\
& \lambda(t)\dot{x}(t) + \dot{v}(t) + v(t) = 0. 
\end{array}\right.  
\end{equation*}
This system yields weak convergence to $A^{-1}(0)$ under a certain condition on $\lambda(\cdot)$. Subsequent work has obtained convergence rates for various first-order algorithms obtained by the implicit discretization of this system or its variants~\citep{Attouch-2013-Global,Abbas-2014-Newton,Attouch-2016-Dynamic}. 

Under the assumption that $A$ is point-to-point and \textit{cocoercive}, inertial systems taking the following form have been considered in the literature~\citep{Alvarez-2001-Inertial,Attouch-2011-Asymptotic,Mainge-2013-First,Bot-2016-Second}: 
\begin{equation*}
\ddot{x}(t) + \alpha\dot{x}(t) + A(x(t)) = 0. 
\end{equation*}
It is worth mentioning that cocoercivity is necessary for guaranteeing weak asymptotic stabilization, and a fast convergence rate. For $\lambda > 0$, the operator $A_\lambda = \frac{1}{\lambda}(I - (I + \lambda A)^{-1})$ is $\lambda$-cocoercive and $A^{-1}(0) = A_\lambda^{-1}(0)$. This motivates us to study the following inertial system: 
\begin{equation*}
\ddot{x}(t) + \alpha\dot{x}(t) + A_\lambda(x(t)) = 0.
\end{equation*}
In the quest for faster convergence,~\citet{Attouch-2019-Convergence} combined this system with asymptotically vanishing damping and a time-dependent regularizing parameter $\lambda(\cdot)$: 
\begin{equation*}
\ddot{x}(t) + \alpha(t)\dot{x}(t) + A_{\lambda(t)}(x(t)) = 0. 
\end{equation*}
The discretization of these dynamics gives the relaxed inertial proximal algorithm~\citep{Attouch-2018-Convergence,Attouch-2019-Time,Attouch-2019-Convergence,Attouch-2020-Convergence}.  Recently,~\citet{Attouch-2020-Newton,Attouch-2021-Continuous} have proposed to study Newton-like inertial dynamics which generalizes the system in Eq.~\eqref{sys:general-potential} from convex optimization to monotone inclusion problems. The resulting dynamics takes the following general form:
\begin{equation}\label{sys:general-inclusion}
\ddot{x}(t) + \alpha(t)\dot{x}(t) + \beta(t)\tfrac{d}{dt}(A_{\lambda(t)}x(t)) + b(t)A_{\lambda(t)}x(t) = 0. 
\end{equation}    
The introduction of Newton-like correction term $\frac{d}{dt}(A_{\lambda(t)}x(t))$---the generalization of the Hessian-driven damping -- provides a well-posed system for which we can derive the weak convergence of trajectories to $A^{-1}(0)$. The convergence rates have also been obtained in both continuous-time and discrete-time cases using the metric $\|A_{\lambda(t)}x(t)\|$(see~\citet{Attouch-2019-Convergence} and~\citet{Attouch-2020-Newton,Attouch-2021-Continuous}). In contrast, we investigate different dynamical systems and derive different rates in terms of two more intuitive metrics---a gap function and a residue function.

All of the aforementioned dynamical systems study first-order algorithms for monotone inclusion problems and do not aim to capture the acceleration that may be obtainable from high-order smoothness structures. The only exception that we are aware of is~\citet{Attouch-2016-Dynamic} who proposed a proximal Newton method for solving monotone inclusion problems but conducted the convergence rate estimation when an operator is the subdifferential of a convex function. Meanwhile, \citet{Monteiro-2012-Iteration} and~\citet{Bullins-2022-Higher} have demonstrated that high-order tensor algorithms can achieve faster convergence rate than first-order algorithms, but their derivations depend heavily on case-specific algebra.  As such, there remains a gap in our understanding; in particular, we are missing a continuous-time perspective on acceleration in monotone inclusion. 

\paragraph{Closed-loop control systems.}  Closed-loop control systems have been studied in the context of convex optimization~\citep{Lin-2022-Control,Attouch-2022-Loop} and monotone inclusion~\citep{Attouch-2013-Global,Attouch-2016-Dynamic}. Even though~\citet{Attouch-2013-Global,Attouch-2016-Dynamic} closely resembles our work, some differences exist. In particular, their convergence analysis of Newton-type methods targets the solution of convex optimization rather than monotone inclusion problems.  Our focus, on the other hand, is to link closed-loop control with acceleration in monotone inclusion, especially when $p > 2$.  From a technical viewpoint, the construction of the gap function and the convergence rate estimation that we provide do not appear in these earlier works.

\paragraph{Continuous-time perspective on high-order tensor algorithms.} To the best of our knowledge, all the existing work on continuous-time interpretations of high-order tensor algorithms focus on convex optimization~\citep{Wibisono-2016-Variational,Song-2021-Unified,Lin-2022-Control}. In particular,~\citet{Wibisono-2016-Variational} studied the following inertial gradient system with asymptotically vanishing damping:
\begin{equation*}
\ddot{x}(t) + \tfrac{p+2}{t}\dot{x}(t) + C(p+1)^2t^{p-1}\nabla\Phi(x(t)) = 0,
\end{equation*}
which is an open-loop system without Hessian-driven damping. They derived a class of $p^\textnormal{th}$-order tensor algorithms by implicit discretization and established a convergence rate of $O(k^{-(p+1)})$ in terms of the objective function gap.~\citet{Song-2021-Unified} proposed another form of open-loop dynamics (we consider the simplified form in a Euclidean setting):
\begin{equation*}
\ddot{x}(t) + \left(\tfrac{2\dot{a}(t)}{a(t)} - \tfrac{\ddot{a}(t)}{\dot{a}(t)}\right)\dot{x}(t) + \left(\tfrac{(\dot{a}(t))^2}{a(t)}\right)\nabla\Phi(x(t)) = 0, 
\end{equation*}
which is also open-loop and lacks Hessian-driven damping.  Recently,~\citet{Lin-2022-Control} provided a control-theoretic perspective on optimal acceleration for high-order tensor algorithms.  They considered the following closed-loop control system with Hessian-driven damping: 
\begin{equation*}
\ddot{x}(t) + \alpha(t)\dot{x}(t) + \beta(t)\nabla^2\Phi(x(t))\dot{x}(t) + b(t)\nabla\Phi(x(t)) = 0,
\end{equation*}
where $(\alpha, \beta, b)$ are defined by 
\begin{equation*}
\begin{array}{ll}
& \alpha(t) = \tfrac{2\dot{a}(t)}{a(t)} - \tfrac{\ddot{a}(t)}{\dot{a}(t)}, \quad \beta(t) = \tfrac{(\dot{a}(t))^2}{a(t)}, \quad b(t) = \tfrac{\dot{a}(t)(\dot{a}(t) + \ddot{a}(t))}{a(t)}, \\
& a(t) = \tfrac{1}{4}(\int_0^t \sqrt{\lambda(s)} ds)^2, \quad (\lambda(t))^p\|\nabla\Phi(x(t))\|^{p-1} = \theta,   
\end{array} 
\end{equation*}
and recovered a class of optimal high-order tensor algorithms~\citep{Monteiro-2013-Accelerated,Gasnikov-2019-Near} from implicit discretization of the above system.  Here the rate is $O(t^{-(3p+1)/2})$ in terms of the objective function gap. 

There is comparatively little work on the development of high-order tensor algorithms for monotone inclusion problems; indeed, we are only aware of the high-order mirror-prox method~\citep{Bullins-2022-Higher}.  However, the derivation of this algorithm does not flow from a single underlying principle but again involves case-specific algebra. It has been an open challenge to extend earlier work on open-loop and closed-loop systems from convex optimization to monotone inclusion problems.

\section{Convergence Properties of Trajectories}\label{sec:convergence}
In this section, we give a Lyapunov function for analyzing the convergence properties of solution trajectories of our system in Eq.~\eqref{sys:general} and Eq.~\eqref{sys:choice-feedback}.  We prove the weak convergence of trajectories to equilibrium by appeal to the Opial lemma as well as strong convergence results under additional conditions. We also derive new global convergence rates by estimating the rate of decrease of Lyapunov function.  Finally, we use another Lyapunov function to establish local linear convergence under an error bound condition. 

\subsection{Weak convergence}
We present our results on the weak convergence of trajectories. 
\begin{theorem}\label{Theorem:Weak-Convergence} 
Suppose that $(x, \lambda): [0, +\infty) \mapsto \HCal \times (0, +\infty)$ is a global solution of the closed-loop control system in Eq.~\eqref{sys:general} and Eq.~\eqref{sys:choice-feedback}. Then, there exists some $\bar{x} \in A^{-1}(0)$ such that the trajectory $x(t)$ weakly converges to $\bar{x}$ as $t \rightarrow +\infty$. 
\end{theorem}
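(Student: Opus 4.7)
The plan is to invoke the Opial lemma with the target set $S = A^{-1}(0)$. For this I need (i) that $\lim_{t \to \infty}\|x(t) - z\|$ exists for every $z \in A^{-1}(0)$, and (ii) that every weak cluster point of $x(\cdot)$ lies in $A^{-1}(0)$. The Lyapunov candidate is the obvious one, $V_z(t) = \tfrac{1}{2}\|x(t) - z\|^2$, and the whole argument rests on rewriting the dynamics in Eq.~\eqref{sys:general} as a monotone inclusion on the trajectory. Writing $y(t) = (I + \lambda(t) A)^{-1} x(t)$, the definition of the resolvent gives $\tfrac{x(t) - y(t)}{\lambda(t)} \in A y(t)$, while Eq.~\eqref{sys:general} gives $\dot x(t) = y(t) - x(t)$, so we obtain the clean identity
\begin{equation*}
-\tfrac{\dot x(t)}{\lambda(t)} \in A\bigl(x(t) + \dot x(t)\bigr),
\end{equation*}
valid for all $t \geq 0$.

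First I would establish (i). Pairing the displayed inclusion against $y(t) - z = x(t) + \dot x(t) - z$ and using $0 \in Az$ together with monotonicity of $A$ gives $\langle -\dot x(t), x(t) + \dot x(t) - z\rangle \geq 0$, which rearranges to
\begin{equation*}
\tfrac{d}{dt} V_z(t) + \|\dot x(t)\|^2 \leq 0.
\end{equation*}
Thus $V_z(\cdot)$ is nonincreasing (so the limit exists), and integrating yields $\int_0^{+\infty}\|\dot x(s)\|^2\, ds \leq V_z(0) < +\infty$. In particular there exists a sequence $t_k \to +\infty$ along which $\|\dot x(t_k)\| \to 0$; the feedback law in Eq.~\eqref{sys:choice-feedback} then implies $\lambda(t_k) = \theta\|\dot x(t_k)\|^{-(p-1)} \to +\infty$ (for $p \geq 2$; for $p=1$ it is already constant), and in either case the element $v_k := -\dot x(t_k)/\lambda(t_k)$ of $A y(t_k)$ satisfies $\|v_k\| = \|\dot x(t_k)\|^p/\theta \to 0$.

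Next I would establish (ii). Fix any weak cluster point $x^\star$ of $x(\cdot)$, say $x(s_j) \rightharpoonup x^\star$ along $s_j \to +\infty$. A standard diagonal/interpolation argument using both $\{t_k\}$ from the preceding step and the fact that $\|x(t) - x^\star\|$ converges (take $z = x^\star$ once we verify $x^\star \in A^{-1}(0)$; at this point use instead the monotone convergence of $V_{z}$ for a fixed $z \in A^{-1}(0)$ to extract a common subsequence $\tau_j$ with $x(\tau_j) \rightharpoonup x^\star$ and $\|\dot x(\tau_j)\| \to 0$) yields $y(\tau_j) = x(\tau_j) + \dot x(\tau_j) \rightharpoonup x^\star$ and $v_j \in A y(\tau_j)$ with $v_j \to 0$ strongly. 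Since the graph of a maximal monotone operator is sequentially closed in the weak-strong topology of $\HCal \times \HCal$, we conclude $0 \in A x^\star$, i.e.\ $x^\star \in A^{-1}(0)$.

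Both hypotheses of the Opial lemma now hold, so $x(t)$ converges weakly to a point of $A^{-1}(0)$. The step I expect to be the most delicate is the extraction in the previous paragraph: the $L^2$ bound on $\dot x$ only directly forces $\liminf\|\dot x(t)\| = 0$, so I have to argue that a subsequence realizing this $\liminf$ can be aligned with an arbitrary weakly convergent subsequence of $x$, which is where the monotonicity of $V_z$ (already proved) does the bookkeeping. Everything else is routine: the monotonicity estimate is a one-line computation, the feedback law translates the $L^2$ bound into strong convergence of the residual $v_k$, and demiclosedness of the maximal monotone graph closes the loop.
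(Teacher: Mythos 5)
Your overall skeleton matches the paper's: Opial's lemma with $S = A^{-1}(0)$, the Lyapunov function $\tfrac{1}{2}\|x(t)-z\|^2$, the descent inequality $\tfrac{d}{dt}V_z(t) \le -\|\dot x(t)\|^2$, and weak--strong closedness of the graph of $A$ to identify cluster points. The descent computation, condition (i) of Opial, and the finiteness of $\int_0^\infty \|\dot x(s)\|^2\,ds$ are all correct.

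The gap is in condition (ii). From the $L^2$ bound you only get $\liminf_{t\to\infty}\|\dot x(t)\| = 0$, i.e., \emph{some} sequence $t_k$ along which the residual vanishes, and you then assert that this can be aligned with an arbitrary weakly convergent subsequence $x(s_j)\rightharpoonup x^\star$ by invoking ``the monotone convergence of $V_z$ for a fixed $z\in A^{-1}(0)$.'' That tool does not do the job: convergence of $\|x(t)-z\|$ says nothing about where $\|\dot x\|$ is small, and the points $x(t_k)$ need not cluster weakly at $x^\star$. As written, the extraction of a common subsequence $\tau_j$ with both $x(\tau_j)\rightharpoonup x^\star$ and $\|\dot x(\tau_j)\|\to 0$ is unjustified. (It could be repaired by an averaging step you did not give: choose $\tau_j \in [s_j, s_j+1]$ with $\|\dot x(\tau_j)\|^2 \le \int_{s_j}^{s_j+1}\|\dot x(s)\|^2\,ds \to 0$, and note $\|x(\tau_j)-x(s_j)\| \le \bigl(\int_{s_j}^{s_j+1}\|\dot x(s)\|^2\,ds\bigr)^{1/2} \to 0$, so $x(\tau_j)\rightharpoonup x^\star$ as well.)

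The paper avoids subsequences entirely by exploiting a structural feature of the closed-loop law that your argument never uses: $\lambda(\cdot)$ is nondecreasing (Lemma~\ref{Lemma:control-second}), so by Eq.~\eqref{sys:choice-feedback} the map $t \mapsto \|\dot x(t)\| = (\theta/\lambda(t))^{1/(p-1)}$ is nonincreasing. Combined with $\int_0^t \|\dot x(s)\|^2\,ds \le \ECal(0)$ this yields the pointwise bound $\|\dot x(t)\|^2 \le \ECal(0)/t$, hence $\|\dot x(t)\|\to 0$ and $\lambda(t)\to+\infty$ along the \emph{whole} trajectory; every weak cluster point of $x$ is then automatically one of $y = x + \dot x$, and the elements $\tfrac{1}{\lambda(t)}(x(t)-y(t)) \in Ay(t)$ tend to zero in norm, after which demiclosedness finishes as you describe. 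Besides closing the gap, this quantitative decay rate is exactly what is reused later in the proof of Theorem~\ref{Theorem:Trajectory-Convergence-Rate}, so you should route your argument through it.
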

\begin{remark}
In the Hilbert-space setting (possibly infinite dimensional),  the weak convergence of $x(\cdot)$ to some $\bar{x} \in A^{-1}(0)$ in Theorem~\ref{Theorem:Weak-Convergence} is the best we can expect without additional conditions.  The same result was established for other systems for convex optimization, including an open-loop inertial system for first-order algorithms~\citep{Attouch-2016-Fast} and a closed-loop control system (which is the special instance of our system for $p=2$), and for second-order algorithms~\citep{Attouch-2016-Dynamic}.  
\end{remark}

We define the following simple Lyapunov function for the system in Eq.~\eqref{sys:general} and Eq.~\eqref{sys:choice-feedback}: 
\begin{equation}\label{def:Lyapunov}
\ECal(t) = \tfrac{1}{2}\|x(t) - z\|^2,  
\end{equation}
where $z \in \HCal$ is a point in the Hilbert space. Note that this function measures the distance between $x(t)$ and any fixed point $z \in \HCal$. It is simpler than that used for analyzing the convergence of Newton-like inertial dynamics for monotone inclusion problems and different from the ones developed for the systems with asymptotically vanishing damping. The closest Lyapunov function to ours is the one employed by~\citet{Attouch-2016-Dynamic}, which is defined as the distance between $x(t)$ and $z \in A^{-1}(0)$.  We note that the seemingly minor modification to the form in Eq.~\eqref{def:Lyapunov} is key to deriving new results on the ergodic convergence of trajectories in terms of a gap function (see Theorem~\ref{Theorem:Trajectory-Convergence-Rate} and its proof). 

The following proposition gives the Opial lemma in its continuous form~\citep{Opial-1967-Weak}. It has become a basic analytical tool to study the weak convergence of trajectories of dynamical systems associated with discrete-time algorithms for convex optimization~\citep{Alvarez-2000-Minimizing, Attouch-2000-Heavy} and monotone inclusion~\citep{Attouch-2001-Second, Attouch-2016-Dynamic}. 
\begin{proposition}[Opial Lemma]\label{Prop:Opial}
Suppose that $S \subseteq \HCal$ is a nonempty subset and $x: [0, +\infty) \mapsto \HCal$ is a mapping.  Then, there exists some $\bar{x} \in S$ such that $x(t)$ weakly converges to $\bar{x}$ as $t \rightarrow +\infty$ if both of the following assumptions hold true:
\begin{enumerate}
\item For every $z \in S$, we have that $\lim_{t \rightarrow +\infty} \|x(t) - z\|$ exists. 
\item Every weak sequential cluster point of $x(\cdot)$ belongs to $S$. 
\end{enumerate}
\end{proposition}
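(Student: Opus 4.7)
The plan is to deduce convergence in three stages: boundedness, extraction of at least one weak sequential cluster point (which lives in $S$ by hypothesis), and uniqueness of that cluster point, from which weak convergence of the entire net $x(t)$ follows. First, fixing any $z_0 \in S$, assumption~(1) ensures that $\lim_{t \to +\infty}\|x(t)-z_0\|$ exists; in particular $\{x(t) : t \geq 0\}$ is norm-bounded. Since $\HCal$ is a Hilbert space and hence reflexive, the Banach-Alaoglu theorem implies that for any sequence $t_n \to +\infty$ the sequence $\{x(t_n)\}$ has a weakly convergent subsequence, so the set of weak sequential cluster points of $x(\cdot)$ is nonempty, and by assumption~(2) each such cluster point lies in $S$.

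Next, I would prove that the weak sequential cluster point is unique. Suppose $\bar{x}_1, \bar{x}_2 \in S$ are weak cluster points, realized by sequences $t_n \to +\infty$ and $s_n \to +\infty$ with $x(t_n) \rightharpoonup \bar{x}_1$ and $x(s_n) \rightharpoonup \bar{x}_2$. The key ingredient is the Hilbert-space identity
\begin{equation*}
\|x(t) - \bar{x}_1\|^2 - \|x(t) - \bar{x}_2\|^2 = \|\bar{x}_1\|^2 - \|\bar{x}_2\|^2 + 2\langle x(t),\, \bar{x}_2 - \bar{x}_1 \rangle.
\end{equation*}
By assumption~(1) applied at $z = \bar{x}_1$ and $z = \bar{x}_2$, the left-hand side has a finite limit as $t \to +\infty$, so $\langle x(t), \bar{x}_2 - \bar{x}_1\rangle$ also converges. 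Evaluating this limit along $\{t_n\}$ yields $\langle \bar{x}_1, \bar{x}_2 - \bar{x}_1\rangle$, while evaluating it along $\{s_n\}$ yields $\langle \bar{x}_2, \bar{x}_2 - \bar{x}_1\rangle$. Equating the two expressions gives $\|\bar{x}_2 - \bar{x}_1\|^2 = 0$, hence $\bar{x}_1 = \bar{x}_2$.

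Finally, I would promote uniqueness of cluster points to weak convergence of the entire trajectory. Let $\bar{x}$ denote the unique weak sequential cluster point. If $x(t)$ did not weakly converge to $\bar{x}$, there would exist $\xi \in \HCal$, $\varepsilon > 0$, and a sequence $t_n \to +\infty$ with $|\langle x(t_n) - \bar{x}, \xi\rangle| \geq \varepsilon$. Boundedness of $\{x(t_n)\}$ permits extraction of a weakly convergent subsequence, whose weak limit is a cluster point of $x(\cdot)$ and hence equals $\bar{x}$ by the uniqueness step; passing to that subsequence in the inequality gives $0 \geq \varepsilon$, a contradiction. Therefore $x(t) \rightharpoonup \bar{x}$.

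The only place where the Hilbert-space structure is genuinely used is the uniqueness step, and that is what I expect to be the delicate point of the argument: the polarization-style identity lets us convert the existence of two norm-limits from assumption~(1) into the existence of a limit for the linear functional $t \mapsto \langle x(t), \bar{x}_2 - \bar{x}_1\rangle$, which the two cluster points must simultaneously realize. Everything else is routine reflexivity and subsequence extraction.
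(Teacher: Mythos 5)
Your proof is correct and is the standard argument for Opial's lemma; the paper itself does not prove this proposition but simply cites \citet{Opial-1967-Weak}, so there is nothing to compare against beyond noting that your three steps (boundedness, nonemptiness of the set of weak sequential cluster points via weak sequential compactness of bounded sets, and uniqueness via the polarization identity combined with assumption~(1)) constitute exactly the classical proof. The only cosmetic remark is that the extraction of weakly convergent subsequences from bounded sequences is more precisely attributed to weak sequential compactness of bounded sets in a Hilbert space (Eberlein--\v{S}mulian, or metrizability of the weak topology on bounded sets of the separable subspace spanned by the sequence) rather than to Banach--Alaoglu alone, but this does not affect the validity of the argument.
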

To prove Theorem~\ref{Theorem:Weak-Convergence}, we provide one technical lemma that characterizes a descent property of $\ECal(\cdot)$ which is crucial to our subsequent analysis in this paper.  
\begin{lemma}\label{Lemma:Lyapunov-descent}
Suppose that $(x, \lambda): [0, +\infty) \mapsto \HCal \times (0, +\infty)$ is a global solution of the closed-loop control system in Eq.~\eqref{sys:general} and Eq.~\eqref{sys:choice-feedback} and let $z \in A^{-1}(0)$ in Eq.~\eqref{def:Lyapunov}. Then, we have
\begin{equation*}
\tfrac{d\ECal(t)}{dt} \leq - \|x(t) - (I + \lambda(t)A)^{-1}x(t)\|^2. 
\end{equation*}
\end{lemma}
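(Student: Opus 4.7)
The plan is to differentiate $\ECal(t) = \tfrac{1}{2}\|x(t) - z\|^2$ and reduce the required inequality to a monotonicity statement about the resolvent. Writing $x_\lambda(t) = (I+\lambda(t)A)^{-1}x(t)$, the system~\eqref{sys:general} gives $\dot x(t) = x_\lambda(t) - x(t)$, which is continuous in $t$ (since $x(\cdot)$ is continuously differentiable by Theorem~\ref{Theorem:Global-Existence-Uniquess}). Hence
\[
\tfrac{d\ECal(t)}{dt} = \langle \dot x(t), x(t) - z\rangle = \langle x_\lambda(t) - x(t),\, x(t) - z\rangle.
\]
First I would perform the algebraic identity
\[
\langle x_\lambda(t) - x(t),\, x(t) - z\rangle + \|x_\lambda(t) - x(t)\|^2 = \langle x_\lambda(t) - x(t),\, x_\lambda(t) - z\rangle,
\]
which reduces the claim to proving $\langle x_\lambda(t) - x(t),\, x_\lambda(t) - z\rangle \leq 0$, i.e.\ $\langle x(t) - x_\lambda(t),\, x_\lambda(t) - z\rangle \geq 0$.

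Next I would invoke the defining property of the resolvent: $x_\lambda(t) = (I+\lambda(t)A)^{-1}x(t)$ is equivalent to $\tfrac{1}{\lambda(t)}(x(t) - x_\lambda(t)) \in A x_\lambda(t)$. Since $z \in A^{-1}(0)$, we have $0 \in Az$, and the monotonicity of $A$ applied to the pair $(x_\lambda(t), z)$ gives
\[
\Bigl\langle \tfrac{1}{\lambda(t)}(x(t) - x_\lambda(t)) - 0,\; x_\lambda(t) - z \Bigr\rangle \geq 0.
\]
Multiplying by $\lambda(t) > 0$ (which is guaranteed by Theorem~\ref{Theorem:Global-Existence-Uniquess}) yields exactly the inequality we need, which upon substitution back into the derivative of $\ECal$ proves the lemma.

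There is no real obstacle here: the whole argument is a two-line manipulation combining the resolvent identity with maximal monotonicity. The only thing worth noting is that although the feedback law $\lambda(\cdot)$ is only locally Lipschitz (hence differentiable a.e.), it appears \emph{inside} the resolvent and does not need to be differentiated, so the derivative of $\ECal$ is well-defined everywhere along the trajectory via the continuously differentiable map $x(\cdot)$, and the stated inequality holds pointwise for all $t \geq 0$.
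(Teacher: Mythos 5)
Your proof is correct and follows essentially the same route as the paper's: differentiate $\ECal$, use the system to substitute $\dot{x}(t) = (I+\lambda(t)A)^{-1}x(t) - x(t)$, split off the $-\|x(t)-(I+\lambda(t)A)^{-1}x(t)\|^2$ term, and kill the cross term by applying monotonicity of $A$ to the resolvent inclusion $\tfrac{1}{\lambda(t)}(x(t)-x_\lambda(t)) \in Ax_\lambda(t)$ together with $0 \in Az$. Your closing remark about not needing to differentiate $\lambda(\cdot)$ is a sensible (if implicit in the paper) observation.
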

\paragraph{Proof of Theorem~\ref{Theorem:Weak-Convergence}:} By Proposition~\ref{Prop:Opial}, it suffices to prove that (i) $\lim_{t \rightarrow +\infty} \|x(t) - z\|$ exists for every $z \in A^{-1}(0)$ and (ii) every weak sequential cluster point of $x(\cdot)$ belongs to $A^{-1}(0)$. 

By Lemma~\ref{Lemma:Lyapunov-descent}, we have $\ECal(t) = \tfrac{1}{2}\|x(t) - z\|^2$ is nonincreasing for any fixed $z \in A^{-1}(0)$.  This implies that (i) holds true.  Further,  let $\bar{x} \in \HCal$ be a weak sequential cluster point of $x(\cdot)$, we claim that it is also a weak sequential cluster point of $y(\cdot) = (I + \lambda(\cdot)A)^{-1}x(\cdot)$. Indeed, Lemma~\ref{Lemma:Lyapunov-descent} implies
\begin{equation*}
\ECal(0) - \ECal(t) \geq \int_0^t \|x(s) - y(s)\|^2 \; ds, \quad \textnormal{for all } t \geq 0. 
\end{equation*}
Since $\ECal(t) \geq 0$, we have $\ECal(0) - \ECal(t) \leq \ECal(0)$.  As a direct consequence of Lemma~\ref{Lemma:control-second} and Theorem~\ref{Theorem:Global-Existence-Uniquess}, we have that $\lambda: [0, +\infty)$ is nondecreasing.  This together with Eq.~\eqref{sys:choice-feedback} implies that $t \mapsto \|x(t) - y(t)\|$ is nonincreasing.  Putting these pieces together yields 
\begin{equation*}
\|x(t) - y(t)\|^2 \leq \tfrac{\ECal(0)}{t} \rightarrow 0, \quad \textnormal{as } t \rightarrow +\infty. 
\end{equation*}
This implies the desired result that $\bar{x}$ is also a weak sequential cluster point of $y(\cdot)$. In addition, we have $\frac{1}{\lambda(t)}(x(t) - y(t)) \in Ay(t)$.  Combining $\|x(t) - y(t)\| \rightarrow 0$ and Eq.~\eqref{sys:choice-feedback} implies that $\lambda(t) \rightarrow +\infty$. Therefore, we have that $\frac{1}{\lambda(t)}\|x(t) - y(t)\| \rightarrow 0$ as $t \rightarrow +\infty$. Since $\bar{x}$ is a weak sequential cluster point of $y(\cdot)$ and the graph of $A$ is demi-closed~\citep[Chapter 10]{Goebel-1990-Topics}, we have $0 \in A\bar{x}$ and hence $\bar{x} \in A^{-1}(0)$. 

\subsection{Strong convergence}
We further establish strong convergence of a global solution of the closed-loop control system in Eq.~\eqref{sys:general} and Eq.~\eqref{sys:choice-feedback} under additional conditions.  
\begin{theorem}\label{Theorem:Strong-Convergence} 
Suppose that $(x, \lambda): [0, +\infty) \mapsto \HCal \times (0, +\infty)$ is a global solution of the closed-loop control system in Eq.~\eqref{sys:general} and Eq.~\eqref{sys:choice-feedback}. Then, there exists some $\bar{x} \in A^{-1}(0)$ such that the trajectory $x(t)$  converges strongly to $\bar{x}$ as $t \rightarrow +\infty$ if either of the following conditions holds true:
\begin{enumerate}
\item $A = \nabla \Phi$ where $\Phi: \HCal \mapsto \br \cup \{+\infty\}$ is convex, differentiable and inf-compact.\footnote{A function $\Phi$ is \emph{inf-compact} if for any $r > 0$ and $\kappa \in \br$, the set $\{x \in \HCal: \|x\| \leq r, \Phi(x) \leq \kappa\}$ is a relatively compact set in $\HCal$, i.e., the set whose closure is compact.}  
\item $A^{-1}(0)$ has a nonempty interior. 
\end{enumerate}
\end{theorem}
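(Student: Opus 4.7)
The plan is to bootstrap the weak convergence of Theorem~\ref{Theorem:Weak-Convergence} (which already yields a weak limit $\bar{x} \in A^{-1}(0)$) to strong convergence under each alternative hypothesis, exploiting the Lyapunov descent of Lemma~\ref{Lemma:Lyapunov-descent} in two quite different ways. Under both conditions the strong limit is forced to coincide with $\bar{x}$, since agreement of any strong subsequential limit with the weak limit is automatic.

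\emph{Condition (1).} My first step is to show that $t \mapsto \Phi(x(t))$ is non-increasing. Writing $y(t) = (I + \lambda(t) A)^{-1} x(t)$, the system gives $\dot{x}(t) = y(t) - x(t)$, and the defining relation of the resolvent yields $\nabla\Phi(y(t)) = \lambda(t)^{-1}(x(t)-y(t)) = -\lambda(t)^{-1}\dot{x}(t)$. The chain rule then decomposes
\begin{equation*}
\tfrac{d}{dt}\Phi(x(t)) \;=\; \langle \nabla\Phi(x(t)) - \nabla\Phi(y(t)),\, y(t) - x(t)\rangle \;-\; \tfrac{1}{\lambda(t)}\|\dot{x}(t)\|^2,
\end{equation*}
whose first summand is non-positive by monotonicity of $\nabla\Phi$ and whose second is manifestly non-positive, so $\Phi(x(t)) \leq \Phi(x(0))$ for all $t \geq 0$. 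Combining this with the norm bound $\|x(t)\| \leq \|x(0)\| + 2\|z\|$ obtained from Lemma~\ref{Lemma:Lyapunov-descent} for any fixed $z \in A^{-1}(0)$, the entire trajectory lies in $\{x \in \HCal : \|x\| \leq R,\; \Phi(x) \leq \Phi(x(0))\}$, which is relatively compact by inf-compactness of $\Phi$. Hence every sequence $\{x(t_n)\}_{n\geq 1}$ admits a strongly convergent subsequence; the weak convergence of $x(\cdot)$ to $\bar{x}$ identifies each such strong limit with $\bar{x}$, forcing strong convergence of the full trajectory.

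\emph{Condition (2).} Pick $x^\star \in \mathrm{int}(A^{-1}(0))$ and $\rho > 0$ with $\BB_\rho(x^\star) \subseteq A^{-1}(0)$. For every unit vector $v \in \HCal$, Lemma~\ref{Lemma:Lyapunov-descent} applied to $z = x^\star + \rho v$ shows that $t \mapsto \|x(t) - x^\star - \rho v\|^2$ is non-increasing. Expanding the square and dropping the $t$-independent constant, for all $0 \leq s \leq t$
\begin{equation*}
\|x(s) - x^\star\|^2 - \|x(t) - x^\star\|^2 \;\geq\; 2\rho\,\langle v,\, x(s) - x(t)\rangle.
\end{equation*}
Taking the supremum over $\|v\| \leq 1$ on the right-hand side yields
\begin{equation*}
\|x(s) - x^\star\|^2 - \|x(t) - x^\star\|^2 \;\geq\; 2\rho\,\|x(s) - x(t)\|.
\end{equation*}
Since $\|x(t) - x^\star\|^2$ is non-increasing and bounded below, it converges, so the left-hand side tends to $0$ as $s,t \to +\infty$. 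Thus $x(\cdot)$ is Cauchy in $\HCal$ and converges strongly, necessarily to $\bar{x}$.

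The main technical obstacle is the descent computation for $\Phi(x(t))$ in condition~(1): it depends on differentiability of $\Phi$ at both $x(t)$ and $y(t)$ and on $x(\cdot)$ being $C^1$. The former is automatic, since the hypothesis that $\Phi$ is differentiable on $\HCal$ forces $\mathrm{dom}(\Phi) = \HCal$ and $A = \nabla\Phi$ to be single-valued everywhere; the latter is supplied by Theorem~\ref{Theorem:Global-Existence-Uniquess}. Condition~(2) is comparatively routine once one recognizes the sup-over-$v$ trick on the translated Lyapunov functions.
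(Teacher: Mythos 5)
Your proof is correct, and for condition~(1) it is essentially the paper's argument: the same monotonicity-based computation showing $t \mapsto \Phi(x(t))$ is non-increasing (you split off the term $-\lambda(t)^{-1}\|\dot{x}(t)\|^2$ explicitly, the paper bounds $\langle y-x,\nabla\Phi(x)\rangle$ by $\langle y-x,\nabla\Phi(y)\rangle$; same content), followed by the same relative-compactness-plus-weak-convergence conclusion. For condition~(2) your mechanism differs from the paper's even though you land on the identical Cauchy estimate $\|x(s)-x(t)\| \leq \tfrac{1}{2\rho}\bigl(\|x(s)-x^\star\|^2 - \|x(t)-x^\star\|^2\bigr)$. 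The paper works at the level of the derivative: it uses monotonicity of $A_\lambda$ together with $\BB_\delta(x^\star) \subseteq A_\lambda^{-1}(0)$ to bound $\|\dot{x}(t)\| \leq -\tfrac{1}{\delta}\langle \dot{x}(t), x(t)-x^\star\rangle$ pointwise and then integrates $\int \|\dot{x}\|$. You instead apply Lemma~\ref{Lemma:Lyapunov-descent} directly at every point $z = x^\star + \rho v$ of the ball (all legitimately in $A^{-1}(0)$), so that each translated Lyapunov function is non-increasing, and recover $\|x(s)-x(t)\|$ by taking the supremum over unit directions $v$ of the cross term. This is a derivative-free variant that buys you a slightly cleaner argument (no appeal to the monotonicity of the Yosida-type operator $A_\lambda$, no integration of $\|\dot{x}\|$) at no loss of generality; both routes exploit the same underlying fact that an entire ball sits inside the zero set. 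Your identification of the strong limit with the weak limit $\bar{x}$ from Theorem~\ref{Theorem:Weak-Convergence} is also handled correctly in both cases.
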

\begin{remark}
In the Hilbert-space setting, the strong convergence is desirable since it guarantees that $\|x(t) - \bar{x}\|$ eventually becomes arbitrarily small~\citep{Bauschke-2001-Weak}. It has been studied for various discrete-time algorithms in convex optimization~\citep{Solodov-2000-Forcing} and the realization of the importance of strong convergence dates to~\citet{Guler-1991-Convergence} who showed that the convergence rate of the sequence of objectives $\{\Phi(x_k)\}_{k \geq 0}$ is better when $\{x_k\}_{k \geq 0}$ with strong convergence than weak convergence.  In addition, the conditions assumed in Theorem~\ref{Theorem:Strong-Convergence} can be verifiable by hand and the similar results can be obtained using the generalized chain rule for the subdifferential in the case $A = \partial \Phi$. 
\end{remark}
\paragraph{Proof of Theorem~\ref{Theorem:Strong-Convergence}:} For the first case, we claim that $t \mapsto \Phi(x(t))$ is nonincreasing.  Indeed, we let $y(t) = (I + \lambda(t)\nabla \Phi)^{-1}x(t)$ and deduce from the convexity of $\Phi$ that 
\begin{equation*}
\tfrac{d \Phi(x(t))}{dt} = \langle\dot{x}(t), \nabla \Phi(x(t))\rangle \overset{\textnormal{Eq.~\eqref{sys:general}}}{=} \langle y(t) - x(t), \nabla \Phi(x(t))\rangle \leq \langle y(t) - x(t), \nabla \Phi(y(t))\rangle. 
\end{equation*}
By the definition of $y(t)$, we have $\lambda(t)\nabla \Phi(y(t)) + y(t) - x(t) = 0$.  This implies
\begin{equation*}
\langle y(t) - x(t), \nabla \Phi(y(t))\rangle = -\lambda(t)\|\nabla \Phi(y(t))\|^2 \leq 0. 
\end{equation*}
Putting these pieces together yields the desired result. As such, it is immediate to see that $x(\cdot)$ is contained in $\{x \in \HCal: \Phi(x) \leq \Phi(x_0)\}$.  By Lemma~\ref{Lemma:Lyapunov-descent}, we have $\ECal(t) = \tfrac{1}{2}\|x(t) - z\|^2$ is nonincreasing for any fixed $z \in A^{-1}(0)$.  Thus, letting $x^\star \in A^{-1}(0)$ with $\|x^\star\|$ finite, we have that $\ECal(0)$ is finite and 
\begin{equation*}
x(t) \in S_0 \doteq \left\{x \in \HCal: \Phi(x) \leq \Phi(x_0), \|x\| \leq \|x^\star\| + \sqrt{2\ECal(0)}\right\}, \quad \textnormal{for all } t \in [0, +\infty). 
\end{equation*}
Since $\Phi: \HCal \mapsto \br \cup \{+\infty\}$ is inf-compact on any bounded set, we have that $S_0$ is relatively compact.  This implies that the trajectory $x(\cdot)$ is relatively compact.  By Theorem~\ref{Theorem:Weak-Convergence}, there exists some $\bar{x} \in A^{-1}(0)$ such that $x(t)$ converges weakly to $\bar{x}$ as $t \rightarrow +\infty$.  As such, we conclude the desired result. 

For the second case,  letting $x^\star \in \HCal$ be a point in the interior of $A^{-1}(0)$, there exists $\delta > 0$ such that $\BB_\delta(x^\star) \subseteq A^{-1}(0)$.  Denoting $A_\lambda = I - (I + \lambda A)^{-1}$, we have
\begin{equation*}
x \in A^{-1}(0) \Longleftrightarrow 0 \in Ax \Longleftrightarrow x = (I + \lambda A)^{-1}x \Longleftrightarrow 0 = A_\lambda x \Longleftrightarrow x \in A_\lambda^{-1}(0),
\end{equation*}
which implies that $A^{-1}(0) = A_\lambda^{-1}(0)$ and $\BB_\delta(x^\star) \subseteq A_\lambda^{-1}(0)$ for any $\lambda > 0$. It is also well known that $A_\lambda$ is monotone~\citep{Rockafellar-1970-Convex} Since $\BB_\delta(x^\star) \subseteq A_\lambda^{-1}(0)$, we have $x^\star + \delta h \in A_\lambda^{-1}(0)$ for any $h \in \HCal$ with $\|h\| \leq 1$. This together with the monotonicity of $A_\lambda$ yields
\begin{equation*}
\langle A_\lambda x(t), x(t) - (x^\star + \delta h)\rangle \geq 0, 
\end{equation*}
which implies
\begin{equation}\label{inequality:Strong-convergence-first}
\delta \langle A_\lambda x(t), h\rangle \leq \langle A_\lambda x(t), x(t) - x^\star\rangle. 
\end{equation}
Combining the above inequality with Eq.~\eqref{sys:general} yields
\begin{equation*}
\|\dot{x}(t)\| = \|A_{\lambda(t)}x(t)\| = \sup_{\|h\| \leq 1} \langle A_{\lambda(t)} x(t), h\rangle \overset{\textnormal{Eq.~\eqref{inequality:Strong-convergence-first}}}{\leq} \tfrac{1}{\delta}\langle A_{\lambda(t)} x(t), x(t) - x^\star\rangle = -\tfrac{1}{\delta}\langle \dot{x}(t), x(t) - x^\star\rangle. 
\end{equation*}
Then, we let $0 \leq t_1 \leq t_2$ and deduce from the above inequality that 
\begin{equation*}
\|x(t_2) - x(t_1)\| \leq \int_{t_1}^{t_2} \|\dot{x}(s)\| \; ds \leq - \tfrac{1}{\delta}\left(\int_{t_1}^{t_2} \langle \dot{x}(s), x(s) - x^\star\rangle \right) \leq \tfrac{1}{2\delta}\left(\|x(t_1) - x^\star\|^2 - \|x(t_2) - x^\star\|^2\right). 
\end{equation*}
Since $x^\star \in A^{-1}(0)$, we deduce from Lemma~\ref{Lemma:Lyapunov-descent} that $\|x(t) - x^\star\|$ is nonincreasing and convergent. Thus, the trajectory $x(\cdot)$ has the Cauchy property which implies the desired result. 

\subsection{Rate of convergence}
We prove the ergodic convergence rate of $O(t^{-(p+1)/2})$ for a global solution of the closed-loop control system in Eq.~\eqref{sys:general} and Eq.~\eqref{sys:choice-feedback} in terms of a gap function.  We also prove a pointwise convergence rate of $O(t^{-p/2})$ in terms of a residue function, and then establish local linear convergence for a global solution in terms of a distance function. 

Before stating our results, we provide the gap function and the residue function for monotone inclusion problems. Indeed, the following gap function originates from the Fitzpatrick function~\citep{Borwein-2010-Convex} and is also defined in the concurrent work of~\citet{Cui-2022-Stochastic}. Formally, we have
\begin{equation}\label{def:gap-function}
\textsc{gap}(x) = \sup_{z \in \dom(A)} \sup_{\xi \in Az} \ \langle \xi, x - z\rangle. 
\end{equation}
Clearly, $\textsc{gap}(\cdot)$ is closed\footnote{A function $\Phi: \HCal \mapsto \br$ is closed if the sublevel set $\{x \in \HCal: \Phi(x) \leq \alpha\}$ is closed for any $\alpha \in \br$; see~\citet{Rockafellar-1970-Convex}. } and convex.  Moreover, if $A$ is maximal monotone, we have that $\textsc{gap}(x) \geq 0$ for all $x \in \HCal$ with equality if and only if $x \in A^{-1}(0)$ holds. The residue function is derived from the monotone inclusion problem as follows, 
\begin{equation}\label{def:residue-function}
\textsc{res}(x) = \inf_{\xi \in Ax} \|\xi\|. 
\end{equation}
We are now ready to present our main results on the global convergence rate estimation in terms of the gap function in Eq.~\eqref{def:gap-function} and the residue function in Eq.~\eqref{def:residue-function}. 
\begin{theorem}\label{Theorem:Trajectory-Convergence-Rate}
Suppose that $(x, \lambda): [0, +\infty) \mapsto \HCal \times (0, +\infty)$ is a global solution of the closed-loop control system in Eq.~\eqref{sys:general} and Eq.~\eqref{sys:choice-feedback} and let $\dom(A)$ be closed and bounded. Then,  we have
\begin{equation*}
\textsc{gap}(\tilde{z}(t)) = O(t^{-\frac{p+1}{2}}),  
\end{equation*}
and 
\begin{equation*}
\textsc{res}(z(t)) = O(t^{-\frac{p}{2}}),  
\end{equation*}
where $\tilde{z}(\cdot)$ and $z(\cdot)$ are uniquely determined by $\lambda(\cdot)$ and $x(\cdot)$ as follows, 
\begin{align*}
\textnormal{\bf (Ergodic Iterate)} \qquad & \tilde{z}(t) = \tfrac{1}{\int_0^t \lambda(s) \; ds}\left(\int_0^t \lambda(s)(I + \lambda(s)A)^{-1}x(s) \; ds\right),\\
\textnormal{\bf (Pointwise Iterate)} \qquad & z(t) = (I + \lambda(t)A)^{-1}x(t). 
\end{align*}
\end{theorem}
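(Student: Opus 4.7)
} The plan is to exploit the identification $\dot{x}(s) = y(s) - x(s)$, where $y(s) = (I+\lambda(s)A)^{-1}x(s) = z(s)$, together with the operator inclusion $\xi(s) := \tfrac{1}{\lambda(s)}(x(s)-y(s)) \in Ay(s)$ obtained from the definition of the resolvent. Monotonicity of $A$ then gives $\langle \xi - \xi(s), z - y(s)\rangle \geq 0$ for every $(z,\xi)$ with $\xi \in Az$, which rearranges as
\begin{equation*}
\lambda(s)\langle \xi, y(s)-z\rangle \;\leq\; \langle x(s) - y(s), y(s)-z\rangle \;=\; -\langle \dot{x}(s), x(s) - z\rangle + \|\dot{x}(s)\|^2,
\end{equation*}
where the last equality uses $y(s) - z = (x(s)-z) + \dot{x}(s)$. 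Integrating on $[0,t]$ and recognizing $-\int_0^t \langle \dot{x}(s), x(s)-z\rangle\, ds = \tfrac{1}{2}\|x(0)-z\|^2 - \tfrac{1}{2}\|x(t)-z\|^2$ produces
\begin{equation*}
\Bigl(\int_0^t \lambda(s)\, ds\Bigr) \langle \xi, \tilde{z}(t) - z\rangle \;\leq\; \tfrac{1}{2}\|x(0)-z\|^2 + \int_0^t \|\dot{x}(s)\|^2\, ds.
\end{equation*}
The integral on the right is controlled by Lemma~\ref{Lemma:Lyapunov-descent} applied with any fixed $x^\star \in A^{-1}(0)$, so it is bounded by $\ECal(0) = \tfrac{1}{2}\|x(0)-x^\star\|^2$; the first term is bounded uniformly in $z$ because $\dom(A)$ is closed and bounded. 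Taking the supremum over $(z,\xi)$ with $z\in\dom(A)$ and $\xi \in Az$ yields
\begin{equation*}
\textsc{gap}(\tilde{z}(t)) \;\leq\; \frac{C}{\int_0^t \lambda(s)\, ds},
\end{equation*}
for a constant $C$ depending only on $\diam(\dom(A))$, $x(0)$ and $x^\star$.

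The remaining task, and the crux of the whole argument, is to show $\int_0^t \lambda(s)\, ds \gtrsim t^{(p+1)/2}$. For this I combine three facts established earlier: (i) $\|\dot{x}(\cdot)\|$ is nonincreasing (shown in the proof of Theorem~\ref{Theorem:Weak-Convergence} from the closed-loop relation $\lambda(s)\|\dot{x}(s)\|^{p-1} = \theta$ together with Lemma~\ref{Lemma:control-second}); (ii) $\int_0^\infty \|\dot{x}(s)\|^2\, ds \leq \ECal(0)$ from Lemma~\ref{Lemma:Lyapunov-descent}; (iii) the closed-loop law itself. From (i) and (ii), the monotonicity argument $t\|\dot{x}(t)\|^2 \leq \int_0^t \|\dot{x}(s)\|^2\,ds$ gives $\|\dot{x}(t)\| \leq C_1 t^{-1/2}$; then (iii) inverts this into $\lambda(t) = \theta\|\dot{x}(t)\|^{-(p-1)} \geq C_2\, t^{(p-1)/2}$ for $p\geq 2$, whence $\int_0^t \lambda(s)\, ds \geq \tfrac{2C_2}{p+1}\, t^{(p+1)/2}$. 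Combined with the display above, this gives the ergodic rate $\textsc{gap}(\tilde{z}(t)) = O(t^{-(p+1)/2})$. (For $p=1$, $\lambda(t)\equiv \theta$ and $\int_0^t\lambda = \theta t$, so the bound reads $O(t^{-1}) = O(t^{-(p+1)/2})$, consistent.)

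For the pointwise rate, I simply note that $\xi(t) = \tfrac{1}{\lambda(t)}(x(t)-y(t)) \in Ay(t) = Az(t)$, so
\begin{equation*}
\textsc{res}(z(t)) \;\leq\; \|\xi(t)\| \;=\; \frac{\|\dot{x}(t)\|}{\lambda(t)} \;=\; \frac{\|\dot{x}(t)\|^p}{\theta},
\end{equation*}
using $\lambda(t)\|\dot{x}(t)\|^{p-1} = \theta$. The bound $\|\dot{x}(t)\| \leq C_1 t^{-1/2}$ derived above then immediately yields $\textsc{res}(z(t)) = O(t^{-p/2})$. The main obstacle in the whole proof is step (ii)--(iii): turning the integrability of $\|\dot{x}\|^2$ into a concrete polynomial growth rate for $\lambda$, which is what drives the exponent $(p+1)/2$. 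Everything else is a clean application of monotonicity, the resolvent identity, and the Lyapunov descent lemma.
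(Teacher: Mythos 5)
Your proposal is correct and follows essentially the same route as the paper: monotonicity of $A$ applied to $\tfrac{1}{\lambda(s)}(x(s)-y(s))\in Ay(s)$ plus the Lyapunov descent to get $\textsc{gap}(\tilde z(t))=O(1/\int_0^t\lambda)$, then the bound $\|\dot x(t)\|^2\leq \ECal(0)/t$ (from integrability and monotonicity of $\|\dot x\|$) inverted through the algebraic equation to get $\lambda(t)\gtrsim t^{(p-1)/2}$ and the pointwise residue bound $\|\dot x(t)\|^p/\theta$. The only blemish is a harmless sign slip: since $x(s)-y(s)=-\dot x(s)$, the identity is $\langle x(s)-y(s),y(s)-z\rangle=-\langle\dot x(s),x(s)-z\rangle-\|\dot x(s)\|^2$ (not $+\|\dot x(s)\|^2$), so your displayed bound is valid but looser than necessary — the paper simply discards the nonnegative term $\|x(s)-y(s)\|^2$ and needs no extra $\int_0^t\|\dot x\|^2$ on the right-hand side.
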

\begin{remark}
Theorem~\ref{Theorem:Trajectory-Convergence-Rate} is new to the best of our knowledge and extends several classical results concerning discrete-time algorithms for monotone inclusion problems. Indeed, the discrete-time version of our results have been obtained by the extragradient method for $p=1$~\citep{Nemirovski-2004-Prox,Monteiro-2010-Complexity,Monteiro-2011-Complexity} and the Newton proximal extragradient method for $p=2$~\citep{Monteiro-2012-Iteration}. A similar ergodic convergence result was achieved by high-order mirror-prox method~\citep{Bullins-2022-Higher} for saddle point and VI problems for $p \geq 3$. Notably, our theorem demonstrates the importance of averaging for monotone inclusion problems by showing that the convergence rate can be faster in the ergodic sense for all $p \geq 1$.  The idea of averaging for convex optimization and monotone VIs goes back to at least the mid-seventies~\citep{Bruck-1977-Weak,Lions-1978-Methode,Nemirovski-1978-Cesari,Nemirovski-1981-Effective}. Its advantage was also recently justified for saddle point problems and VIs by establishing lower bounds~\citep{Golowich-2020-Tight, Ouyang-2021-Lower}. Our theorem provides another way to understand averaging from a continuous-time point of view. 
\end{remark}

We define the so-called error bound condition as an inequality that bounds the distance between $x \in \HCal$ and $A^{-1}(0)$ by a residual function at $x$. This condition has been proven to be useful in proving the linear convergence of discrete-time algorithms for solving convex optimization and monotone VI problems~\citep{Lewis-1998-Error,Drusvyatskiy-2018-Error, Drusvyatskiy-2021-Nonsmooth}. We adapt this condition for monotone inclusion problems as follows.  We assume that there exists $\delta > 0$ and $\kappa > 0$ such that  
\begin{equation}\label{def:EB}
\textsc{dist}(0, Ax) \leq \delta \Longrightarrow \textsc{dist}(x, A^{-1}(0)) \leq \kappa \cdot \textsc{dist}(0, Ax), 
\end{equation}
where $\textsc{dist}(x, S) = \inf_{z \in S} \|x - z\|$ is a distance function. The corresponding Lyapunov function used for analyzing a global solution under the error bound condition is as follows:
\begin{equation}\label{def:EB-Lyapunov}
\tilde{\ECal}(t) = \frac{1}{2} (\textsc{dist}(x(t), A^{-1}(0)))^2 \doteq \inf_{z \in A^{-1}(0)} \left\{\frac{1}{2}\|x(t) - z\|^2\right\}. 
\end{equation}
The Lyapunov function in Eq.~\eqref{def:EB-Lyapunov} can be interpreted as a continuous version of a function used by various authors; see e.g.,~\citet{Tseng-1995-Linear}. The convergence rate estimation intuitively depends on the descent inequality. This requires the differentiation of $\tilde{\ECal}(\cdot)$ which is not immediate since $A^{-1}(0)$ is not a singleton set and the projection of $x(t)$ onto $A^{-1}(0)$ will change as $t$ varies. We instead upper bound the difference $\tilde{\ECal}(t') - \ECal(t)$ for any $t' \geq t$ given a fixed $t$. We have the following theorem.
\begin{theorem}\label{Theorem:Trajectory-Convergence-Linear}
Suppose that $(x, \lambda): [0, +\infty) \mapsto \HCal \times (0, +\infty)$ is a global solution of the closed-loop control system in Eq.~\eqref{sys:general} and Eq.~\eqref{sys:choice-feedback} and let the error bound condition in Eq.~\eqref{def:EB} hold true.  Then, there exists a sufficiently large $t_0 > 0$ such that 
\begin{equation*}
\textsc{dist}(x(t), A^{-1}(0)) = O(e^{-ct/2}), \quad \textnormal{for all } t > t_0. 
\end{equation*}
where $c > 0$ is a constant and upper bounded by $c \leq 2\left(1 + \frac{\kappa}{\lambda(0)}\right)^{-2} $. 
\end{theorem}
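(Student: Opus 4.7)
The plan is to work with the Lyapunov function $\tilde{\ECal}(t)=\tfrac{1}{2}(\dist(x(t),A^{-1}(0)))^2$ from Eq.~\eqref{def:EB-Lyapunov}, but to sidestep its lack of classical differentiability (caused by the fact that the projection of $x(t)$ onto $A^{-1}(0)$ need not be unique and varies with $t$) via a ``freeze-and-compare'' device. For any fixed $t$, pick $z_t\in A^{-1}(0)$ attaining $\dist(x(t),A^{-1}(0))$ and define the auxiliary function $\ECal_t(s)=\tfrac{1}{2}\|x(s)-z_t\|^2$ for $s\geq t$. Since $z_t\in A^{-1}(0)$, Lemma~\ref{Lemma:Lyapunov-descent} applies to $\ECal_t$ and gives $\tfrac{d\ECal_t(s)}{ds}\leq -\|x(s)-y(s)\|^2$, where I abbreviate $y(s)=(I+\lambda(s)A)^{-1}x(s)$. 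Because $\tilde{\ECal}(t')\leq \ECal_t(t')$ by definition of the infimum while $\tilde{\ECal}(t)=\ECal_t(t)$, integrating this descent property from $t$ to $t'$ yields the one-sided comparison
\begin{equation*}
\tilde{\ECal}(t')-\tilde{\ECal}(t)\leq -\int_t^{t'}\|x(s)-y(s)\|^2\,ds, \qquad \textnormal{for every } t'\geq t \geq 0.
\end{equation*}

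The second step is to convert the dissipation $\|x(s)-y(s)\|^2$ into a multiple of $\tilde{\ECal}(s)$ using the error bound. Since $\tfrac{1}{\lambda(s)}(x(s)-y(s))\in Ay(s)$, we have $\dist(0,Ay(s))\leq \tfrac{1}{\lambda(s)}\|x(s)-y(s)\|$, which tends to zero thanks to $\|x(s)-y(s)\|\to 0$ (a consequence of Lemma~\ref{Lemma:Lyapunov-descent}, combined with the divergence $\lambda(s)\to+\infty$ already exploited in the proof of Theorem~\ref{Theorem:Weak-Convergence} for $p\geq 2$). Hence there exists a sufficiently large $t_0>0$ with $\dist(0,Ay(s))\leq \delta$ for every $s\geq t_0$, so the error bound in Eq.~\eqref{def:EB} yields $\dist(y(s),A^{-1}(0))\leq \tfrac{\kappa}{\lambda(s)}\|x(s)-y(s)\|$. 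The triangle inequality together with the monotonicity $\lambda(s)\geq \lambda(0)$ from Lemma~\ref{Lemma:control-second} then gives
\begin{equation*}
\dist(x(s),A^{-1}(0))\leq \|x(s)-y(s)\|+\dist(y(s),A^{-1}(0))\leq \Bigl(1+\tfrac{\kappa}{\lambda(0)}\Bigr)\|x(s)-y(s)\|,
\end{equation*}
equivalently $\|x(s)-y(s)\|^2\geq c\,\tilde{\ECal}(s)$ with $c=2(1+\kappa/\lambda(0))^{-2}$.

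Combining the two estimates produces the integral inequality $\tilde{\ECal}(t')\leq \tilde{\ECal}(t)-c\int_t^{t'}\tilde{\ECal}(s)\,ds$ for all $t'\geq t\geq t_0$. Setting $G(t)=\int_{t_0}^{t}\tilde{\ECal}(s)\,ds$ and specializing to $t=t_0$ turns this into $G'(t)+cG(t)\leq \tilde{\ECal}(t_0)$; multiplying by the integrating factor $e^{ct}$ and integrating from $t_0$ (where $G(t_0)=0$) yields $G(t)\leq \tfrac{\tilde{\ECal}(t_0)}{c}(1-e^{-c(t-t_0)})$, and plugging back gives $\tilde{\ECal}(t)\leq \tilde{\ECal}(t_0)\,e^{-c(t-t_0)}$. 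Taking square roots delivers $\dist(x(t),A^{-1}(0))=O(e^{-ct/2})$ with $c=2(1+\kappa/\lambda(0))^{-2}$, matching the bound claimed in the theorem.

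The main obstacle is precisely the non-differentiability of $\tilde{\ECal}$ along the orbit, which is why the theorem's preamble advertises an estimate on $\tilde{\ECal}(t')-\tilde{\ECal}(t)$ rather than on $\tfrac{d\tilde{\ECal}}{dt}$; the ``freeze-and-compare'' trick with $z_t$ is what makes Lemma~\ref{Lemma:Lyapunov-descent} directly usable. A secondary but necessary subtlety is certifying that the error bound actually activates on the trajectory, which reduces to showing $\dist(0,Ay(t))\to 0$; this is where the quantitative decay $\|x(t)-y(t)\|\to 0$ (for $p=1$) and the divergence $\lambda(t)\to+\infty$ (for $p\geq 2$) established earlier in the paper enter the argument.
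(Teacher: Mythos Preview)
Your overall strategy matches the paper's proof: freeze the nearest point in $A^{-1}(0)$, invoke the descent estimate of Lemma~\ref{Lemma:Lyapunov-descent}, activate the error bound once $\tfrac{1}{\lambda(t)}\|x(t)-y(t)\|$ has dropped below $\delta$, bound $\tilde{\ECal}$ by a multiple of $\|x-y\|^2$, and finish with a Gr\"onwall-type argument. Your route to the integral inequality is in fact cleaner than the paper's: the paper computes the upper Dini derivative of $\tilde{\ECal}$ via a difference-quotient estimate and then converts it to an integral inequality through a partition argument, whereas you obtain the integral inequality in one stroke by integrating $\tfrac{d}{ds}\ECal_t(s)\le -\|x(s)-y(s)\|^2$ and using $\tilde{\ECal}(t')\le \ECal_t(t')$, $\tilde{\ECal}(t)=\ECal_t(t)$.

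There is, however, a genuine slip in your final step. From $G'(t)+cG(t)\le \tilde{\ECal}(t_0)$ you correctly deduce the \emph{upper} bound $G(t)\le \tfrac{\tilde{\ECal}(t_0)}{c}\bigl(1-e^{-c(t-t_0)}\bigr)$, but ``plugging back'' into $\tilde{\ECal}(t)\le \tilde{\ECal}(t_0)-cG(t)$ requires a \emph{lower} bound on $G(t)$; the inequality goes the wrong way. In fact the single-base-point inequality $\tilde{\ECal}(t)\le \tilde{\ECal}(t_0)-c\int_{t_0}^{t}\tilde{\ECal}$ alone does \emph{not} force exponential decay (one can keep $\tilde{\ECal}$ near $0$ on a long interval and then let it rise back close to $\tilde{\ECal}(t_0)$ without violating that inequality). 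The fix is to use the full strength of what you actually proved, namely $\tilde{\ECal}(t')-\tilde{\ECal}(t)\le -c\int_t^{t'}\tilde{\ECal}(s)\,ds$ for \emph{all} $t'\ge t\ge t_0$. Dividing by $t'-t$ and sending $t'\to t^+$ gives $D^+\tilde{\ECal}(t)\le -c\,\tilde{\ECal}(t)$; since $e^{c(t-t_0)}$ is $C^1$, the product rule for Dini derivatives yields $D^+\bigl(e^{c(t-t_0)}\tilde{\ECal}(t)\bigr)\le 0$, and a continuous function with nonpositive upper Dini derivative is nonincreasing. Hence $\tilde{\ECal}(t)\le \tilde{\ECal}(t_0)e^{-c(t-t_0)}$ as desired. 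This is the substance behind the paper's partition-plus-Gr\"onwall step as well.
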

\begin{remark}
Theorem~\ref{Theorem:Trajectory-Convergence-Linear} establishes the strong convergence of $x(\cdot)$ to some $x^\star \in A^{-1}(0)$ under the error bound condition and establishes local linear convergence in terms of a distance function.  This improves the results in Theorem~\ref{Theorem:Trajectory-Convergence-Rate} and demonstrates the value of the error bound condition. The same linear convergence guarantee is established in~\citet{Csetnek-2021-Convergence} under similar conditions. In fact, the convergence analysis of discrete-time algorithms under an error bound condition is of independent interest~\citep{Solodov-2003-Convergence} and its analysis involves different techniques.
\end{remark}
\paragraph{Proof of Theorem~\ref{Theorem:Trajectory-Convergence-Rate}:} Using the definition of $\ECal(\cdot)$ in Eq.~\eqref{def:Lyapunov} and the same argument as applied in Lemma~\ref{Lemma:Lyapunov-descent}, we have
\begin{equation*}
\tfrac{d\ECal(t)}{dt} = -\|x(t) - (I + \lambda(t)A)^{-1}x(t)\|^2 - \langle x(t) - (I + \lambda(t)A)^{-1}x(t), (I + \lambda(t)A)^{-1}x(t) - z\rangle.  
\end{equation*}
Using the definition of $z(\cdot)$ and the fact that $\|x(t) - (I + \lambda(t)A)^{-1}x(t)\|^2 \geq 0$, we have
\begin{equation*}
\tfrac{d\ECal(t)}{dt} \leq - \langle x(t) - z(t), z(t) - z\rangle. 
\end{equation*}
Since $A$ is monotone and $\frac{1}{\lambda(t)}(x(t) - z(t)) \in Az(t)$, we have
\begin{equation*}
\langle x(t) - z(t), z(t) - z\rangle \geq \lambda(t)\langle \xi, z(t) - z\rangle, \quad \textnormal{for all } \xi \in Az. 
\end{equation*}
Putting these pieces together yields that, for any $z \in \dom(A)$ and any $\xi \in Az$, we have
\begin{equation*}
\tfrac{d\ECal(t)}{dt} \leq - \lambda(t)\langle \xi, z(t) - z\rangle. 
\end{equation*}
Integrating this inequality over $[0, t]$ yields
\begin{equation*}
\int_0^t \lambda(s)\langle \xi, z(s) - z\rangle \; ds \leq \ECal(0) - \ECal(t) \leq \tfrac{1}{2}\|x_0 - z\|^2, \quad \textnormal{for all } t \geq 0.  
\end{equation*}
Equivalently,  we have
\begin{equation*}
\langle\xi, \tilde{z}(t) - z\rangle \leq \tfrac{1}{\int_0^t \lambda(s) \; ds}\left(\tfrac{1}{2}\|x_0 - z\|^2\right) \leq \tfrac{1}{\int_0^t \lambda(s) \; ds}\left(\sup_{z \in \dom(A)} \|x_0 - z\|^2\right). 
\end{equation*}
By the definition of $\textsc{gap}(\cdot)$ and using the boundedness of $\dom(A)$, we have
\begin{equation}\label{inequality:trajectory-rate-first}
\textsc{gap}(\tilde{z}(t)) = O\left(\tfrac{1}{\int_0^t \lambda(s) \; ds}\right). 
\end{equation}
Since $z(t) = (I + \lambda(t)A)^{-1}x(t)$, we can obtain from the proof of Theorem~\ref{Theorem:Weak-Convergence} that 
\begin{equation}\label{inequality:trajectory-rate-second}
\|x(t) - z(t)\|^2 \leq \tfrac{\ECal(0)}{t}, \quad \textnormal{for all } t \geq 0.  
\end{equation}
Since $\frac{1}{\lambda(t)}(x(t) - z(t)) \in Az(t)$, we have
\begin{equation}\label{inequality:trajectory-rate-third}
\textsc{res}(z(t)) \leq \tfrac{1}{\lambda(t)}\|x(t) - z(t)\| \leq \tfrac{\ECal(0)}{\lambda(t)\sqrt{t}} = O\left(\tfrac{1}{\lambda(t)\sqrt{t}}\right).
\end{equation}
It remain to estimate the lower bound for the feedback law $\lambda(\cdot)$.  Indeed, by combining Eq.~\eqref{inequality:trajectory-rate-second} and the algebraic equation in Eq.~\eqref{sys:choice-feedback}, we have
\begin{equation}\label{inequality:trajectory-rate-fourth}
\lambda(t) = \tfrac{\theta}{\|x(t) - z(t)\|^{p-1}} \geq \theta\left(\tfrac{t}{\ECal(0)}\right)^{\frac{p-1}{2}}. 
\end{equation}
Plugging Eq.~\eqref{inequality:trajectory-rate-fourth} into Eq.~\eqref{inequality:trajectory-rate-first} and Eq.~\eqref{inequality:trajectory-rate-third} yields the desired results. 

\paragraph{Proof of Theorem~\ref{Theorem:Trajectory-Convergence-Linear}:} Fixing $t \geq 0$ and using the definition of $\tilde{\ECal}$ in Eq.~\eqref{def:EB-Lyapunov}, we have
\begin{equation*}
\tilde{\ECal}(t') - \tilde{\ECal}(t) = \tfrac{1}{2}\left((\textsc{dist}(x(t'), A^{-1}(0)))^2 - (\textsc{dist}(x(t), A^{-1}(0)))^2\right), \quad \textnormal{for all } t' \geq t. 
\end{equation*}
We let $x^\star(t)$ denote the projection of $x(t)$ onto $A^{-1}(0)$ and deduce from the above inequality that 
\begin{equation*}
\tfrac{\tilde{\ECal}(t') - \tilde{\ECal}(t)}{t' - t} \leq \tfrac{\|x(t') - x^\star(t)\|^2 - \|x(t) - x^\star(t)\|^2}{2(t' - t)} = \langle \tfrac{x(t') - x(t)}{t' - t},  \tfrac{x(t') + x(t)}{2} - x^\star(t)\rangle. 
\end{equation*}
Letting $t' \rightarrow^+ t$, we have
\begin{equation}\label{inequality:trajectory-linear-first}
\limsup_{t' \rightarrow^+ t} \tfrac{\tilde{\ECal}(t') - \tilde{\ECal}(t)}{t' - t} \leq \langle \dot{x}(t),  x(t) - x^\star(t)\rangle. 
\end{equation}
For simplicity, we let $y(t) = (I + \lambda(t)A)^{-1}x(t)$ and deduce that $\frac{1}{\lambda(t)}(x(t) - y(t)) \in Ay(t)$.  In addition, $0 \in Ax^\star(t)$.  Putting these pieces together with the monotonicity of $A$ yields
\begin{eqnarray}\label{inequality:trajectory-linear-second}
\lefteqn{\langle \dot{x}(t),  x(t) - x^\star(t)\rangle \overset{\textnormal{Eq.~\eqref{sys:general}}}{=} - \langle x(t) - y(t), x(t) - x^\star(t)\rangle} \\
& = & - \|x(t) - y(t)\|^2 - \langle x(t) - y(t), y(t) - x^\star(t)\rangle \leq - \|x(t) - y(t)\|^2. \nonumber
\end{eqnarray}
Using the same argument in the proof of Theorem~\ref{Theorem:Weak-Convergence}, we have $t \mapsto \frac{1}{\lambda(t)}\|x(t) - y(t)\|$ is nonincreasing and converges to zero as $t \rightarrow +\infty$.  So there exists a sufficiently large $t_0 > 0$ such that 
\begin{equation*}
\tfrac{1}{\lambda(t)}\|x(t) - y(t)\| \leq \delta,  \quad \textnormal{for all } t \geq t_0,
\end{equation*}
where $\delta > 0$ is defined in the error bound condition (cf. Eq.~\eqref{def:EB}). Recall that $\frac{1}{\lambda(t)}(x(t) - y(t)) \in Ay(t)$, we have $\textsc{dist}(0, Ay(t)) \leq \delta$. Since the error bound condition in Eq.~\eqref{def:EB} holds true, we have
\begin{equation*}
\textsc{dist}(y(t), A^{-1}(0)) \leq \kappa \cdot \textsc{dist}(0, Ay(t)), 
\end{equation*}
We let $y^\star(t)$ denote the projection of $y(t)$ onto $A^{-1}(0)$ and deduce from the triangle inequality that 
\begin{equation*}
\textsc{dist}(x(t), A^{-1}(0)) \leq \|x(t) - y^\star(t)\| \leq \|x(t) - y(t)\| + \textsc{dist}(y(t), A^{-1}(0)). 
\end{equation*}
Putting these pieces together yields
\begin{equation*}
\textsc{dist}(x(t), A^{-1}(0)) \leq \|x(t) - y(t)\| + \kappa \cdot \textsc{dist}(0, Ay(t)) \leq \left(1 + \tfrac{\kappa}{\lambda(t)}\right)\|x(t) - y(t)\|. 
\end{equation*} 
Since $\lambda(t)$ is nondecreasing (cf. Lemma~\ref{Lemma:control-second}),  we have $\lambda(t) \geq \lambda(0)$.  By the definition of $\tilde{\ECal}$, we have
\begin{equation}\label{inequality:trajectory-linear-third}
\tilde{\ECal}(t) = \tfrac{1}{2}(\textsc{dist}(x(t), A^{-1}(0)))^2 \leq \tfrac{1}{2}\left(1 + \tfrac{\kappa}{\lambda(0)}\right)^2\|x(t) - y(t)\|^2. 
\end{equation}
Plugging Eq.~\eqref{inequality:trajectory-linear-second} and Eq.~\eqref{inequality:trajectory-linear-third} into Eq.~\eqref{inequality:trajectory-linear-first}, we have
\begin{equation}\label{inequality:trajectory-linear-fourth}
\limsup_{t' \rightarrow^+ t} \tfrac{\tilde{\ECal}(t') - \tilde{\ECal}(t)}{t' - t} \leq -2\left(1 + \tfrac{\kappa}{\lambda(0)}\right)^{-2}\tilde{\ECal}(t) \leq -c \cdot \tilde{\ECal}(t). 
\end{equation}
Fixing $t > 0$, we define a partition of an interval $[0, t)$, 
\begin{equation*}
0 =t_0 < t_1 < t_2 < \ldots < t_i < \ldots < t_n = t, 
\end{equation*}
with $\sup_{0 \leq i \leq n-1} |t_{i+1} - t_i| \leq h$. Here, $h > 0$ is sufficiently small such that Eq.~\eqref{inequality:trajectory-linear-fourth} guarantees
\begin{equation*}
\tfrac{\tilde{\ECal}(t_{i+1}) - \tilde{\ECal}(t_i)}{t_{i+1} - t_i} \leq -c \cdot \tilde{\ECal}(t_i), \quad \textnormal{for all } i \in \{0, 1, 2, \ldots, n-1\}. 
\end{equation*}
This inequality implies 
\begin{equation*}
\tilde{\ECal}(t) - \tilde{\ECal}(0) = \sum_{i=0}^{n-1} (\tilde{\ECal}(t_{i+1}) - \tilde{\ECal}(t_i)) \leq -c \cdot \left(\sum_{i=0}^{n-1} \tilde{\ECal}(t_i) (t_{i+1} - t_i) \right). 
\end{equation*}
Since $\tilde{\ECal}(\cdot): [0, +\infty) \rightarrow [0, +\infty)$ is a continuous function, it is integrable (possibly not differentiable). Letting $h \rightarrow 0$, we have 
\begin{equation*}
\sum_{i=0}^{n-1} \tilde{\ECal}(t_i) (t_{i+1} - t_i)  \rightarrow \int_0^t \tilde{\ECal}(s) \; ds. 
\end{equation*}
Putting these pieces together yields
\begin{equation*}
\tilde{\ECal}(t) - \tilde{\ECal}(0) \leq -c\left(\int_0^t \tilde{\ECal}(s) \; ds\right). 
\end{equation*}
Recall the Gr\"{o}nwall–Bellman inequality in the integral form~\citep{Gronwall-1919-Note,Bellman-1943-Stability}: if $u(\cdot)$ and $\beta(\cdot)$ are both continuous and satisfy the integral inequality: $u(t) \leq u_0 + \int_0^t \beta(s) u(s) ds$, 
we have 
\begin{equation*}
u(t)\leq u_0 \exp\left(\int_0^t \beta(s) \; ds\right).
\end{equation*}
This implies that $\tilde{\ECal}(t) \leq \tilde{\ECal}(0)e^{-ct}$.  Therefore, we conclude that there exists a sufficiently large $t_0 > 0$ such that 
\begin{equation*}
\textsc{dist}(x(t), A^{-1}(0)) = O(e^{-ct/2}), \quad \textnormal{for all } t > t_0. 
\end{equation*}
This completes the proof. 

\subsection{Discussion}\label{subsec:Lyapunov-dis}
We comment on the main techniques for analyzing the system in Eq.~\eqref{sys:general} and Eq.~\eqref{sys:choice-feedback}, including Lyapunov analysis and weak versus strong convergence.  We also compare our approach to other approaches based on time scaling and dry friction. 

\paragraph{Lyapunov analysis.} Key to the continuous-time approach is to derive inertial gradient systems as limits of discrete-time algorithms and interpret the acceleration as the effect of asymptotically vanishing damping and Hessian-driven damping. Analyzing such a dynamical system requires a more complicated Lyapunov function than Eq.~\eqref{def:Lyapunov}. In this context,~\citet{Wilson-2021-Lyapunov} have constructed a unified Lyapunov function and their analysis was shown to be equivalent to Nesterov’s estimate sequence analysis for a variety of first-order algorithms, including quasi-monotone subgradient, accelerated gradient descent and conditional gradient. In contrast, the associated dynamical systems for general monotone inclusion problems need not contain any inertial term~\citep{Attouch-2011-Continuous,Attouch-2013-Global,Abbas-2014-Newton,Attouch-2016-Dynamic}. However, this does not mean that inertia is not relevant outside optimization. Indeed, the inertial dynamical systems and their discretization~\citep{Attouch-2011-Asymptotic} give a family of accelerated first-order algorithms for monotone inclusion problems under the cocoercive condition. Nonetheless, the Lyapunov analysis in the current paper becomes quite simple since our system does not involve any inertial term. In addition, the analysis of the convergence rate estimation under an error bound condition involves a new Lyapunov function that can be of independent interest.  

\paragraph{Weak versus strong convergence.} In the Hilbert-space setting, the (generalized) steepest descent dynamical system associated to a convex potential function $\Phi$ has the following form: 
\begin{equation*}
\left\{\begin{array}{ll}
& -\dot{x}(t) \in \partial \Phi(x(t)), \\
& x(0) = x_0. 
\end{array}\right.  
\end{equation*}
It is well known that the trajectory converges to a point $\bar{x} \in \{x: f(x) = \inf_{x \in \HCal} f(x)\} \neq \emptyset$~\citep{Brezis-1973-Operateurs,Brezis-1978-Asymptotic}. However, the theoretical understanding is far from being complete. In particular, it remains open how to characterize the relationship between $\bar{x}$ and the initial point $x_0$~\citep{Lemaire-1996-Asymptotical}. There is also a famous counterexample~\citep{Baillon-1978-Exemple} which shows that the trajectories of the above system converge weakly but not strongly. Despite the progress on weak versus strong convergence of a regularized Newton dynamic for monotone inclusion problems~\citep{Attouch-2018-Weak}, we are not aware of any discussion about these properties for closed-loop control systems and consider it an interesting open problem to find a counterexample (weak versus strong convergence) for the system in Eq.~\eqref{sys:general} and Eq.~\eqref{sys:choice-feedback}. 

It is worth mentioning that the convergence results of trajectories are important aspects of the
convergence analysis, especially in an infinite-dimensional setting; indeed,  these results have been established for the trajectories of various dynamical systems for monotone inclusion problems in earlier research~\citep{Attouch-2011-Continuous,Attouch-2013-Global,Attouch-2016-Dynamic,Abbas-2014-Newton,Bot-2016-Second,Attouch-2019-Convergence,Attouch-2020-Convergence,Attouch-2020-Newton,Attouch-2021-Continuous}.  A few results are valid only for weak convergence and become true for strong convergence only under additional conditions.  Some results are only valid in the ergodic sense, e.g., the rate of $O(t^{-(p+1)/2})$ in Theorem~\ref{Theorem:Trajectory-Convergence-Rate}.

\paragraph{Time scaling and dry friction.} In the context of dissipative dynamical systems associated with convex optimization algorithms,  there have been two simple yet universally powerful techniques to strengthen the convergence properties of trajectories: time scaling~\citep{Attouch-2019-Time,Attouch-2022-Fast} and dry friction~\citep{Adly-2020-Finite,Adly-2022-First}. In particular, the effect of time scaling is revealed by the coefficient parameter $b(t)$ which comes in as a factor of $\nabla\Phi(x(t))$ in the following open-loop inertial gradient system:
\begin{equation*}
\ddot{x}(t) + \alpha(t)\dot{x}(t) + \beta(t)\nabla^2\Phi(x(t))\dot{x}(t) + b(t)\nabla\Phi(x(t)) = 0. 
\end{equation*}  
In~\citet{Attouch-2019-Time}, the authors investigated the above system without Hessian-driven damping ($\beta(t)=0$). They proved that the convergence rate of a solution trajectory is $O(1/(t^2 b(t)))$ if $\alpha(\cdot)$ and $b(\cdot)$ satisfy certain conditions. As such, a clear improvement is attained by taking $b(t) \rightarrow +\infty$. This demonstrates the power and potential of time scaling, as further evidenced by recent work on systems with Hessian damping~\citep{Attouch-2022-Fast}. Furthermore, some recent work studied another open-loop inertial gradient system in the form of
\begin{equation*}
\ddot{x}(t) + \alpha(t)\dot{x}(t) + \partial \phi(\dot{x}(t)) + \beta(t)\nabla^2\Phi(x(t))\dot{x}(t) + b(t)\nabla\Phi(x(t)) \ni 0,
\end{equation*}  
where the dry friction function $\phi$ is convex with a sharp minimum at the origin,  e.g., $\phi(x) = r\|x\|$ with $r>0$.  In~\citet{Adly-2022-First},  the authors provided a study of the convergence of this system without Hessian-driven damping ($\beta(t)=0$) and derived a class of appealing first-order algorithms that achieve a finite convergence guarantee.  Subsequently,~\citet{Adly-2020-Finite} derived similar results for systems with Hessian-driven damping.

Unfortunately, the aforementioned works on time scaling and dry friction techniques are restricted to the study of open-loop systems associated with convex optimization algorithms.  As such, it remains unknown if these methodologies can be extended to monotone inclusion problems and further capture the continuous-time interpretation of acceleration in high-order monotone inclusion~\citep{Monteiro-2012-Iteration,Bullins-2022-Higher}. In contrast, our closed-loop control system provides a rigorous justification for the large-step condition in the algorithm of~\citet{Monteiro-2012-Iteration} and~\citet{Bullins-2022-Higher} when $p \geq 2$, explaining why the closed-loop control is key to acceleration in monotone inclusion. 

\section{Implicit Discretization and Acceleration}\label{sec:algorithm}
In this section, we propose an algorithmic framework that arises via implicit discretization of our system in Eq.~\eqref{sys:general} and Eq.~\eqref{sys:choice-feedback} in a Euclidean setting. It demonstrates the importance of the large-step condition~\citep{Monteiro-2012-Iteration} for acceleration in monotone inclusion problems, interpreting it as discretization of the algebraic equation. Our framework also clarifies why this condition is unnecessary for acceleration in monotone inclusion problems when $p=1$ (the algebraic equation vanishes). With an approximate tensor subroutine for smooth operator $A$, we derive a specific class of $p^\textnormal{th}$-order tensor algorithms which generalize $p^\textnormal{th}$-order tensor algorithms for convex-concave saddle point and monotone variational inequality problems~\citep{Bullins-2022-Higher}. 

\subsection{Conceptual algorithmic frameworks}\label{subsec:framework}
We study a conceptual algorithmic framework which is derived by implicit discretization of the closed-loop control system in Eq.~\eqref{sys:general} and Eq.~\eqref{sys:choice-feedback} in a Euclidean setting. Indeed, our system takes the form of 
\begin{equation*}
\left\{\begin{array}{ll}
& \dot{x}(t) + x(t) - (I + \lambda(t)A)^{-1}x(t) = 0, \\
& \lambda(t)\|(I + \lambda(t)A)^{-1}x(t) - x(t)\|^{p-1} = \theta, \\ 
& x(0) = x_0 \in \Omega. 
\end{array}\right.  
\end{equation*}
We define the discrete-time sequence $\{(x_k, \lambda_k)\}_{k \geq 0}$ that corresponds to its continuous-time counterpart $\{(x(t), \lambda(t))\}_{t \geq 0}$. By an implicit discretization, we have
\begin{equation}\label{sys:discrete}
\left\{\begin{array}{ll}
& x_{k+1} - (I + \lambda_{k+1}A)^{-1}x_k = 0, \\ 
& \lambda_{k+1}\|x_{k+1} - x_k\|^{p-1} = \theta, \\ 
& x_0 \in \Omega.   
\end{array}\right.
\end{equation}
By introducing two new variables $y_{k+1}$ and $v_{k+1} \in Ay_{k+1}$, the first and second lines of Eq.~\eqref{sys:discrete} can be equivalently reformulated as follows: 
\begin{equation*}
\left\{\begin{array}{ll}
& \lambda_{k+1}v_{k+1} + y_{k+1} - x_k = 0, \\ 
& \lambda_{k+1}\|y_{k+1} - x_k\|^{p-1} = \theta, \\ 
& x_{k+1} = x_k - \lambda_{k+1}v_{k+1}, \\
& x_0 \in \Omega.   
\end{array}\right.
\end{equation*}
We propose to solve the above equations inexactly with an accurate approximation of $A$. Following a suggestion of~\citet{Monteiro-2010-Complexity}, we introduce the relative error tolerance condition with an $\varepsilon$-enlargement of maximal monotone operators given by 
\begin{equation}\label{def:enlargement}
A^\epsilon(x) = \{v \in \br^d \mid \langle x - \tilde{x}, v - \tilde{v}\rangle \geq - \epsilon, \ \forall \tilde{x} \in \br^d, \forall \tilde{v} \in A\tilde{x}\}. 
\end{equation}
Our subroutine is to find $\lambda_{k+1} > 0$ and a triple $(y_{k+1}, v_{k+1}, \varepsilon_{k+1})$ such that 
\begin{equation*}
\|\lambda_{k+1}v_{k+1} + y_{k+1} - x_k\|^2 + 2\lambda_{k+1}\epsilon_{k+1} \leq  \sigma^2\|y_{k+1} - x_k\|^2, \quad v_{k+1} \in A^{\epsilon_{k+1}}(y_{k+1}). 
\end{equation*}
From the above condition, we see that $v_{k+1}$ is sufficiently close to an element in $A(y_{k+1})$. In addition, we relax the discrete-time algebraic equation by using $\lambda_{k+1}\|y_{k+1} - x_k\|^{p-1} \geq \theta$.  

We present our conceptual algorithmic framework formally in Algorithm~\ref{Algorithm:CAF}. It includes the large-step HPE framework of~\citet{Monteiro-2012-Iteration} as a special instance. In fact, we can recover the large-step HPE framework if we set $p = 2$ and change the notation of $A$ to $T$ in Algorithm~\ref{Algorithm:CAF}. 
\begin{algorithm}[!t]
\begin{algorithmic}\caption{Conceptual Algorithmic Framework}\label{Algorithm:CAF}
\STATE \textbf{STEP 0:}  Let $x_0, v_0 \in \br^d$, $\sigma \in (0, 1)$ and $\theta > 0$ be given, and set $k = 0$. 
\STATE \textbf{STEP 1:} If $0 \in Ax_k$, then \textbf{stop}. 
\STATE \textbf{STEP 2:} Otherwise, compute $\lambda_{k+1} > 0$ and a triple $(y_{k+1}, v_{k+1}, \epsilon_{k+1}) \in \HCal \times \HCal \times (0, +\infty)$ such that
\begin{align*}
& v_{k+1} \in A^{\epsilon_{k+1}}(y_{k+1}), \\
& \|\lambda_{k+1}v_{k+1} + y_{k+1} - x_k\|^2 + 2\lambda_{k+1}\epsilon_{k+1} \leq \sigma^2\|y_{k+1} - x_k\|^2, \\
& \lambda_{k+1}\|y_{k+1} - x_k\|^{p-1} \geq \theta.
\end{align*}
\STATE \textbf{STEP 3:} Compute $x_{k+1} = x_k - \lambda_{k+1}v_{k+1}$. 
\STATE \textbf{STEP 4:}  Set $k \leftarrow k+1$, and go to \textbf{STEP 1}. 
\end{algorithmic}
\end{algorithm}

\subsection{Convergence rate estimation}
We present both an ergodic and a pointwise estimate of convergence rate for Algorithm~\ref{Algorithm:CAF}. Our analysis is motivated by the aforementioned continuous-time analysis, simplifying the analysis in~\citet{Monteiro-2012-Iteration} for the case of $p=2$ and generalizing it to the case of $p>2$ in a systematic manner. 

We start with the presentation of our main results for Algorithm~\ref{Algorithm:CAF}, which generalizes the results in~\citet[Theorem~2.5 and~2.7]{Monteiro-2012-Iteration} in terms of a gap function for bounded domain from $p = 2$ to $p \geq 2$. To streamline the presentation, we rewrite a gap function in Eq.~\eqref{def:gap-function}: 
\begin{equation*}
\textsc{gap}(x) = \sup_{z \in \dom(A)} \sup_{\xi \in Az} \ \langle \xi, x - z\rangle. 
\end{equation*}
It is worth mentioning that the theoretical results in~\citet[Theorem~2.5 and~2.7]{Monteiro-2012-Iteration} are presented for unbounded domains using the modified optimality criterion. Our analysis can be extended using the relationship between a gap function and a relative tolerance error criterion~\citep{Monteiro-2010-Complexity}. However, the proof becomes significantly longer and its link with continuous-time analysis becomes unclear (the continuous-time version of the modified optimality criterion is unclear). Accordingly, we focus on the bounded domain and present the results for simplicity. 
\begin{theorem}\label{Theorem:CAF-main}
Let $k \geq 1$ be an integer and let $\dom(A)$ be closed and bounded. Then,  we have
\begin{equation*}
\textsc{gap}(\tilde{y}_k) = O(k^{-\frac{p+1}{2}}), 
\end{equation*}
and 
\begin{equation*}
\inf_{1 \leq i \leq k} \|v_i\| = O(k^{-\frac{p}{2}}),  \qquad \inf_{1 \leq i \leq k} \epsilon_i = O(k^{-\frac{p+1}{2}}), 
\end{equation*}
where the ergodic iterates $\{\tilde{y}_k\}_{k \geq 1}$ are defined by 
\begin{equation*}
\tilde{y}_k = \frac{1}{\sum_{i=1}^k \lambda_k}\left(\sum_{i=1}^k \lambda_i y_i\right). 
\end{equation*}
In addition, if we let $\epsilon_k = 0$ for all $k \geq 1$ and assume that the error bound condition in Eq.~\eqref{def:EB} holds true, the iterates $\{x_k\}_{k \geq 1}$ converge to $A^{-1}(0)$ with a local linear rate. 
\end{theorem}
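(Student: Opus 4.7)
The plan is to mirror the continuous-time arguments of Theorem~\ref{Theorem:Trajectory-Convergence-Rate} and Theorem~\ref{Theorem:Trajectory-Convergence-Linear}, with the discrete Lyapunov function $\mathcal{E}_k = \tfrac{1}{2}\|x_k - z\|^2$ playing the role of $\mathcal{E}(t)$ and the large-step inequality $\lambda_{k+1}\|y_{k+1} - x_k\|^{p-1} \geq \theta$ playing the role of the algebraic constraint in Eq.~\eqref{sys:choice-feedback}. First I would derive the basic discrete descent identity: from $x_{k+1} = x_k - \lambda_{k+1}v_{k+1}$ and the residual $r_{k+1} = \lambda_{k+1}v_{k+1} + y_{k+1} - x_k$, completion of squares using the identity $\lambda_{k+1}v_{k+1} - r_{k+1} = x_k - y_{k+1}$ yields
\begin{equation*}
\|x_{k+1} - z\|^2 \leq \|x_k - z\|^2 + \|r_{k+1}\|^2 - \|y_{k+1} - x_k\|^2 - 2\lambda_{k+1}\langle v_{k+1}, y_{k+1} - z\rangle.
\end{equation*}
The enlargement property $v_{k+1} \in A^{\epsilon_{k+1}}(y_{k+1})$ gives $\langle v_{k+1}, y_{k+1} - z\rangle \geq \langle \xi, y_{k+1} - z\rangle - \epsilon_{k+1}$ for any $z \in \dom(A)$ and $\xi \in Az$, and the relative-error tolerance $\|r_{k+1}\|^2 + 2\lambda_{k+1}\epsilon_{k+1} \leq \sigma^2\|y_{k+1} - x_k\|^2$ absorbs both error terms simultaneously to produce the clean descent inequality
\begin{equation*}
\|x_{k+1} - z\|^2 \leq \|x_k - z\|^2 - (1-\sigma^2)\|y_{k+1} - x_k\|^2 - 2\lambda_{k+1}\langle \xi, y_{k+1} - z\rangle.
\end{equation*}

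Telescoping from $k = 0$ to $K-1$ yields, in parallel with the continuous case, both $(1-\sigma^2)\sum_{k=1}^{K}\|y_k - x_{k-1}\|^2 \leq \|x_0 - z\|^2$ (the discrete analog of $\int_0^t \|x(s) - y(s)\|^2\,ds \leq 2\mathcal{E}(0)$) and $\langle \xi, \tilde{y}_K - z\rangle \leq \|x_0 - z\|^2/(2\sum_{k=1}^K \lambda_k)$. Taking the supremum over $z \in \dom(A)$ and $\xi \in Az$ in the latter and invoking boundedness of $\dom(A)$ yields $\textsc{gap}(\tilde{y}_K) = O\bigl(1/\sum_{k=1}^K \lambda_k\bigr)$.

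The decisive step is to convert the large-step inequality $\lambda_k \geq \theta\|y_k - x_{k-1}\|^{-(p-1)}$ into a lower bound on $\sum_{k=1}^K \lambda_k$, replacing the continuous-time lower bound $\lambda(t) \geq \theta(t/\mathcal{E}(0))^{(p-1)/2}$ of the previous section. Writing $a_k = \|y_k - x_{k-1}\|$ and applying H\"older's inequality with exponents $\tfrac{p+1}{2}$ and $\tfrac{p+1}{p-1}$ to the decomposition $1 = \bigl(a_k^{-(p-1)}\bigr)^{2/(p+1)}\bigl(a_k^2\bigr)^{(p-1)/(p+1)}$ gives $K \leq \bigl(\sum_k \lambda_k/\theta\bigr)^{2/(p+1)}\bigl(\sum_k a_k^2\bigr)^{(p-1)/(p+1)}$, hence $\sum_{k=1}^K \lambda_k \geq c_0 K^{(p+1)/2}$ for a constant $c_0 > 0$ depending only on $\theta, \sigma, p$ and the diameter $D$ of $\dom(A)$; this delivers $\textsc{gap}(\tilde{y}_K) = O(K^{-(p+1)/2})$. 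For the pointwise bounds I would use $\|\lambda_{k+1}v_{k+1}\| \leq \|r_{k+1}\| + \|y_{k+1} - x_k\| \leq (1+\sigma)\|y_{k+1} - x_k\|$, which combined with the large-step condition gives $\|v_{k+1}\| \leq (1+\sigma)\|y_{k+1} - x_k\|^p/\theta$ and $\epsilon_{k+1} \leq \sigma^2\|y_{k+1} - x_k\|^{p+1}/(2\theta)$; selecting the index achieving $a_k^2 \leq D^2/((1-\sigma^2)K)$ then yields $\inf_{i\leq K}\|v_i\| = O(K^{-p/2})$ and $\inf_{i\leq K}\epsilon_i = O(K^{-(p+1)/2})$.

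For the linear-convergence claim, I switch to the Lyapunov function $\tilde{\mathcal{E}}_k = \tfrac{1}{2}(\textsc{dist}(x_k, A^{-1}(0)))^2$ and follow the moving-projection trick of Theorem~\ref{Theorem:Trajectory-Convergence-Linear}: choosing $z = x_k^\star$ (the projection of $x_k$ onto $A^{-1}(0)$) and $\xi = 0$ in the descent inequality gives $\tilde{\mathcal{E}}_{k+1} \leq \tfrac{1}{2}\|x_{k+1} - x_k^\star\|^2 \leq \tilde{\mathcal{E}}_k - \tfrac{1-\sigma^2}{2}\|y_{k+1} - x_k\|^2$. With $\epsilon_k \equiv 0$ we have $v_{k+1} \in Ay_{k+1}$, and the bound $\|v_{k+1}\| \leq (1+\sigma)\|y_{k+1} - x_k\|^p/\theta \to 0$ shows that $\textsc{dist}(0, Ay_{k+1}) \leq \delta$ for all $k \geq k_0$; the error bound in Eq.~\eqref{def:EB} combined with the triangle inequality then produces
\begin{equation*}
\textsc{dist}(x_k, A^{-1}(0)) \leq \bigl(1 + \kappa(1+\sigma)/\lambda_{k+1}\bigr)\|y_{k+1} - x_k\| \leq M\|y_{k+1} - x_k\|,
\end{equation*}
where the prefactor is uniformly bounded for $k \geq k_0$ since $\lambda_{k+1} \geq \theta\|y_{k+1}-x_k\|^{-(p-1)} \to +\infty$. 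Substituting back gives $\tilde{\mathcal{E}}_{k+1} \leq (1 - (1-\sigma^2)/M^2)\tilde{\mathcal{E}}_k$, i.e., local linear convergence. The main obstacle I anticipate is the H\"older step that turns the large-step condition into $\sum_k \lambda_k \geq c_0 K^{(p+1)/2}$: the exponents must be chosen precisely to match the desired $p$-dependence, and this is where the discrete argument genuinely differs from the continuous-time integration used in Theorem~\ref{Theorem:Trajectory-Convergence-Rate}; every other step is either elementary algebra or a direct translation of the continuous-time analysis.
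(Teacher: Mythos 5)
Your proposal is correct and follows the same architecture as the paper's proof: the discrete Lyapunov function $\ECal_k = \tfrac{1}{2}\|x_k - z\|^2$, the descent inequality absorbing both $\|r_{k+1}\|^2$ and $2\lambda_{k+1}\epsilon_{k+1}$ via the relative-error tolerance (this is exactly Lemma~\ref{Lemma:CAF-descent}), the H\"older-inequality conversion of the large-step condition into $\sum_{i=1}^k \lambda_i \gtrsim k^{(p+1)/2}$ (Lemma~\ref{Lemma:CAF-control}, where your exponent pairing $\tfrac{p+1}{2},\tfrac{p+1}{p-1}$ is the same computation written on the $a_k$'s instead of the $\lambda_i$'s), and the moving-projection argument with $z = x_k^\star$, $\xi = 0$ for the local linear rate. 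The one place you genuinely diverge is the pointwise bounds: the paper first establishes $\inf_i \sqrt{\lambda_i}\|v_i\| = O(k^{-(p+1)/4})$ through an auxiliary quantity $\tau_i = \max\{2\epsilon_i/\sigma^2, \lambda_i\|v_i\|^2/(1+\sigma)^2\}$ (Lemma~\ref{Lemma:CAF-error}) and then inverts the large-step relation via the identity $\lambda_i\|v_i\|^{(p-1)/p} \geq \theta^{1/p}(1-\sigma)^{(p-1)/p}$ to extract $\inf_i\|v_i\| = O(k^{-p/2})$, whereas you bound $\|v_i\| \leq (1+\sigma)\theta^{-1}\|y_i - x_{i-1}\|^p$ and $\epsilon_i \leq \tfrac{\sigma^2}{2\theta}\|y_i - x_{i-1}\|^{p+1}$ directly and then select the index minimizing $\|y_i - x_{i-1}\|$ from the summability bound. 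Both yield the stated rates; your route avoids the intermediate quantity and the exponent gymnastics, at the cost of nothing, so it is arguably cleaner. Two cosmetic remarks: the H\"older step degenerates at $p=1$ and should be replaced there by the trivial observation $\lambda_k \geq \theta$ (as the paper does), and the telescoped sum-of-squares bound requires taking $z = z^\star \in A^{-1}(0)$ with $\xi = 0$ so that the inner-product term can be dropped -- you use this implicitly and correctly.
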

Since the only difference between Algorithm~\ref{Algorithm:CAF} and the large-step HPE framework in~\citet{Monteiro-2012-Iteration} is the order in the algebraic equation, many technical results still hold for Algorithm~\ref{Algorithm:CAF} but their proofs tend to involve case-specific algebra.  Our contribution is to provide a simple proof which flows from the unified underlying continuous-time principle in Section~\ref{sec:convergence}, and also to derive local linear convergence under the error bound condition. 

We present a discrete-time Lypanunov function for Algorithm~\ref{Algorithm:CAF} as follows: 
\begin{equation}\label{def:Lyapunov-discrete}
\ECal_k = \tfrac{1}{2}\|x_k - z\|^2, 
\end{equation}
which will be used to prove technical results that pertain to Algorithm~\ref{Algorithm:CAF}.
\begin{lemma}\label{Lemma:CAF-descent}
For every integer $k \geq 1$, we have
\begin{equation}\label{inequality:CAF-descent-main}
\sum_{i=1}^k \lambda_i \langle v,  y_i - z\rangle + \tfrac{1 - \sigma^2}{2}\left(\sum_{i=1}^k \|x_{i-1} - y_i\|^2\right) \leq \ECal_0 - \ECal_k, \quad \textnormal{for all } v \in Az, 
\end{equation}
Letting $\tilde{y}_k = \frac{1}{\sum_{i=1}^k \lambda_i}(\sum_{i=1}^k \lambda_i y_i)$ be the ergodic iterates, we have $\sup_{v \in Az}\langle v, \tilde{y}_k - z\rangle \leq \frac{\ECal_0}{\sum_{i=1}^k \lambda_i}$. If we further assume that $\sigma < 1$, we have $\sum_{i=1}^k \|x_{i-1} - y_i\|^2 \leq \frac{\|x_0 - z^\star\|^2}{1 - \sigma^2}$ for any $z^\star \in A^{-1}(0)$. 
\end{lemma}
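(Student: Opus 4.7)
\textbf{Proof proposal for Lemma~\ref{Lemma:CAF-descent}.} The plan is to mimic, at the discrete-time level, the computation used to prove Lemma~\ref{Lemma:Lyapunov-descent}: expand one step of the Lyapunov function, use the $\varepsilon$-enlargement of $A$ to introduce the test vector $v \in Az$, absorb the error terms using the relative-error tolerance, and then telescope. Concretely, I would start by fixing $z \in \dom(A)$ and $v \in Az$, and computing
\begin{equation*}
\ECal_k - \ECal_{k-1} \;=\; \tfrac{1}{2}\|x_{k-1} - \lambda_k v_k - z\|^2 - \tfrac{1}{2}\|x_{k-1} - z\|^2 \;=\; -\lambda_k\langle v_k,\, x_{k-1}-z\rangle + \tfrac{\lambda_k^2}{2}\|v_k\|^2,
\end{equation*}
using the update rule $x_k = x_{k-1} - \lambda_k v_k$ from \textbf{STEP 3}. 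Splitting $x_{k-1}-z = (x_{k-1}-y_k) + (y_k-z)$ gives the desired $-\lambda_k\langle v_k, y_k-z\rangle$ plus a quadratic remainder in $x_{k-1}-y_k$ and $\lambda_k v_k$.

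Next, I would introduce the residual $r_k := \lambda_k v_k + y_k - x_{k-1}$ (whose squared norm is controlled by the relative-error condition in \textbf{STEP 2}) and rewrite $\lambda_k v_k = r_k - (y_k - x_{k-1})$. A short algebraic identity then collapses the quadratic remainder to
\begin{equation*}
\tfrac{\lambda_k^2}{2}\|v_k\|^2 - \lambda_k\langle v_k, x_{k-1}-y_k\rangle \;=\; \tfrac{1}{2}\|r_k\|^2 - \tfrac{1}{2}\|x_{k-1}-y_k\|^2.
\end{equation*}
Combining with the $\varepsilon$-enlargement inequality $\langle v_k - v,\, y_k - z\rangle \geq -\varepsilon_k$ (the defining property of $A^{\varepsilon_k}$ in Eq.~\eqref{def:enlargement}) to replace $\langle v_k, y_k-z\rangle$ by $\langle v, y_k-z\rangle - \varepsilon_k$, and then invoking the second inequality of \textbf{STEP 2}, namely $\|r_k\|^2 + 2\lambda_k\varepsilon_k \leq \sigma^2\|y_k-x_{k-1}\|^2$, I obtain the one-step descent estimate
\begin{equation*}
\lambda_k\langle v, y_k - z\rangle + \tfrac{1-\sigma^2}{2}\|x_{k-1} - y_k\|^2 \;\leq\; \ECal_{k-1} - \ECal_k.
\end{equation*}
Summing this telescoping inequality from $i=1$ to $k$ yields Eq.~\eqref{inequality:CAF-descent-main}.

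The two remaining assertions are immediate corollaries. For the ergodic bound, $\ECal_k \geq 0$ and linearity of $\langle v, \cdot - z\rangle$ give $\sum_{i=1}^{k}\lambda_i\langle v, y_i - z\rangle \leq \ECal_0$; dividing by $\sum_{i=1}^k \lambda_i$ and taking the supremum over $v \in Az$ produces the claimed bound. For the trajectory bound, taking $z = z^\star \in A^{-1}(0)$ and $v = 0 \in Az^\star$ kills the first term on the left in Eq.~\eqref{inequality:CAF-descent-main}, and using $\ECal_k \geq 0$ again gives $\sum_{i=1}^k \|x_{i-1}-y_i\|^2 \leq \|x_0 - z^\star\|^2/(1-\sigma^2)$, as required. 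The main obstacle will be the bookkeeping of the approximation slack: the simultaneous appearance of the residual $r_k$ and the enlargement parameter $\varepsilon_k$ means one must be careful to pair $\tfrac{1}{2}\|r_k\|^2$ with $\lambda_k\varepsilon_k$ in exactly the combination that the relative-error condition controls; once this identification is made, the rest of the argument is a direct discrete-time transcription of the continuous-time Lyapunov descent.
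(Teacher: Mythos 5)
Your proposal is correct and follows essentially the same route as the paper's proof: both expand the one-step Lyapunov difference, introduce the residual $\lambda_k v_k + y_k - x_{k-1}$, use the $\varepsilon$-enlargement inequality to bring in $v \in Az$, invoke the relative-error condition to absorb $\tfrac{1}{2}\|r_k\|^2 + \lambda_k\varepsilon_k$, and telescope; the corollaries are then obtained exactly as you describe. The only difference is cosmetic bookkeeping in how the quadratic remainder is grouped before it collapses to $\tfrac{1}{2}\|r_k\|^2 - \tfrac{1}{2}\|x_{k-1}-y_k\|^2$.
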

\begin{lemma}\label{Lemma:CAF-error}
For every integer $k \geq 1$ and $\sigma < 1$, there exists $1 \leq i \leq k$ such that
\begin{align*}
\inf_{1 \leq i \leq k} \sqrt{\lambda_i} \|v_i\| & \leq \sqrt{\tfrac{1+\sigma}{1-\sigma}}\left(\sum_{i=1}^k \lambda_i\right)^{-\frac{1}{2}}\left(\inf_{z^\star \in A^{-1}(0)}\|x_0 - z^\star\|\right),  \\ 
\inf_{1 \leq i \leq k} \epsilon_i & \leq \tfrac{\sigma^2}{2(1-\sigma^2)}\left(\sum_{i=1}^k \lambda_i\right)^{-1}\left(\inf_{z^\star \in A^{-1}(0)}\|x_0 - z^\star\|^2\right).  
\end{align*}
\end{lemma}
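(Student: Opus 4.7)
The plan is to derive both bounds from two elementary per-iterate consequences of the step-2 conditions in Algorithm~\ref{Algorithm:CAF}, combined with the summable-energy estimate already supplied by Lemma~\ref{Lemma:CAF-descent}, and closed out by a weighted-minimum inequality. Crucially, nothing in Lemma~\ref{Lemma:CAF-error} depends on the large-step condition $\lambda_{k+1}\|y_{k+1}-x_k\|^{p-1}\geq\theta$, so I would not try to use it.

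First, I would unpack the relative-error inequality $\|\lambda_i v_i + y_i - x_{i-1}\|^2 + 2\lambda_i\epsilon_i \leq \sigma^2\|y_i - x_{i-1}\|^2$ into two separate ingredients. Dropping the nonnegative term $2\lambda_i\epsilon_i$ and taking square roots yields $\|\lambda_i v_i + y_i - x_{i-1}\| \leq \sigma\|y_i - x_{i-1}\|$, and then the triangle inequality gives the first building block
\[
\lambda_i\|v_i\| \ \leq \ (1+\sigma)\|y_i - x_{i-1}\|.
\]
Dropping instead the squared-norm term gives directly the second building block
\[
2\lambda_i\epsilon_i \ \leq \ \sigma^2\|y_i - x_{i-1}\|^2.
\]
These two per-iterate bounds will each be coupled with the last conclusion of Lemma~\ref{Lemma:CAF-descent}, namely $\sum_{i=1}^k \|x_{i-1}-y_i\|^2 \leq \|x_0-z^\star\|^2/(1-\sigma^2)$ for any $z^\star\in A^{-1}(0)$.

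The closing step is the elementary weighted-minimum inequality $\inf_{1\leq i\leq k} f_i \cdot \sum_{j=1}^k \lambda_j \leq \sum_{j=1}^k \lambda_j f_j$. For the first claim I would pick $f_i = \lambda_i\|v_i\|^2 = (\sqrt{\lambda_i}\|v_i\|)^2$, so that $\lambda_j f_j = (\lambda_j\|v_j\|)^2$; squaring the first building block and summing, the right-hand side collapses to $(1+\sigma)^2\sum_j\|y_j-x_{j-1}\|^2 \leq \frac{(1+\sigma)^2}{1-\sigma^2}\|x_0-z^\star\|^2 = \frac{1+\sigma}{1-\sigma}\|x_0-z^\star\|^2$, and taking $\inf_{z^\star\in A^{-1}(0)}$ recovers the first bound verbatim. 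For the second claim I would pick $f_i = \epsilon_i$; the same trick with the second building block yields $\inf_i\epsilon_i \cdot \sum_j\lambda_j \leq \tfrac{\sigma^2}{2}\sum_j\|y_j-x_{j-1}\|^2 \leq \frac{\sigma^2}{2(1-\sigma^2)}\|x_0-z^\star\|^2$, again infimizing over $z^\star$ at the end.

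There is no real technical obstacle, only one small bookkeeping observation to get right: the quantity to average is not $\sqrt{\lambda_i}\|v_i\|$ itself but its square $\lambda_i\|v_i\|^2$, because then the weighting factor $\lambda_j$ in the min-average inequality promotes $\lambda_j f_j$ to $(\lambda_j\|v_j\|)^2$, which is exactly what the triangle-inequality bound controls. Once this pairing is spotted, the rest is simply arithmetic on the constants $(1+\sigma)^2$ and $(1-\sigma^2)=(1-\sigma)(1+\sigma)$, and both bounds fall out in parallel.
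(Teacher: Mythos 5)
Your proof is correct and follows essentially the same route as the paper: the paper packages your two per-iterate building blocks into a single quantity $\tau_k = \max\{\tfrac{2\epsilon_k}{\sigma^2}, \tfrac{\lambda_k\|v_k\|^2}{(1+\sigma)^2}\}$ satisfying $\lambda_k\tau_k \leq \|y_k - x_{k-1}\|^2$ and then applies the same weighted-minimum inequality against $\sum_i\|y_i - x_{i-1}\|^2$ from Lemma~\ref{Lemma:CAF-descent}. Your parallel treatment of the two bounds is only a cosmetic repackaging of the same argument, and your bookkeeping (averaging $\lambda_i\|v_i\|^2$ rather than $\sqrt{\lambda_i}\|v_i\|$) matches the paper's choice exactly.
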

We provide a lemma giving a lower bound for $\sum_{i=1}^k \lambda_i$. The analysis is motivated by continuous-time analysis for the system in Eq.~\eqref{sys:general} and Eq.~\eqref{sys:choice-feedback}. 
\begin{lemma}\label{Lemma:CAF-control}
For $p \geq 1$ and every integer $k \geq 1$, we have
\begin{equation*}
\sum_{i=1}^k \lambda_i  \geq \theta\left((1 - \sigma^2)\left(\inf_{z^\star \in A^{-1}(0)}\|x_0 - z^\star\|^2\right)\right)^{\frac{p-1}{2}} k^{\frac{p+1}{2}}. 
\end{equation*}
\end{lemma}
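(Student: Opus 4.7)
The plan is to combine the large-step condition enforced in STEP 2 of Algorithm~\ref{Algorithm:CAF}, namely $\lambda_i\|y_i - x_{i-1}\|^{p-1} \geq \theta$, with the aggregate descent estimate $\sum_{i=1}^k \|x_{i-1}-y_i\|^2 \leq \|x_0-z^\star\|^2/(1-\sigma^2)$ from Lemma~\ref{Lemma:CAF-descent}, and then to transfer the resulting control into a lower bound on $\sum_{i=1}^k \lambda_i$ via Jensen's inequality. This mirrors the derivation of Eq.~\eqref{inequality:trajectory-rate-fourth} in the proof of Theorem~\ref{Theorem:Trajectory-Convergence-Rate}, where the feedback law $\lambda(\cdot)$ was lower-bounded by invoking the continuous-time Lyapunov descent.

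First I would dispose of the trivial case $p=1$: the algebraic inequality in STEP 2 collapses to $\lambda_i \geq \theta$, whence $\sum_{i=1}^k \lambda_i \geq \theta k$, which matches the stated bound since the exponent $(p-1)/2=0$ kills the $(1-\sigma^2)\|x_0-z^\star\|^2$ factor and $(p+1)/2=1$.

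For $p \geq 2$, rearranging the large-step inequality gives $\|y_i-x_{i-1}\|^2 \geq (\theta/\lambda_i)^{2/(p-1)}$. Summing over $i=1,\dots,k$, substituting the bound from Lemma~\ref{Lemma:CAF-descent}, and taking the infimum over $z^\star \in A^{-1}(0)$, I obtain
\begin{equation*}
\theta^{\frac{2}{p-1}}\sum_{i=1}^k \lambda_i^{-\frac{2}{p-1}} \; \leq \; \sum_{i=1}^k \|y_i-x_{i-1}\|^2 \; \leq \; \frac{D^2}{1-\sigma^2}, \qquad D \doteq \inf_{z^\star \in A^{-1}(0)} \|x_0-z^\star\|.
\end{equation*}
The next step is to apply Jensen's inequality to $f(u) = u^{-(p-1)/2}$ on $(0,\infty)$, which is decreasing and convex since $f''(u) = \tfrac{(p-1)(p+1)}{4}\, u^{-(p+3)/2} \geq 0$. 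Setting $u_i \doteq \lambda_i^{-2/(p-1)}$ so that $f(u_i)=\lambda_i$, Jensen's inequality produces
\begin{equation*}
\frac{1}{k}\sum_{i=1}^k \lambda_i \; \geq \; \left(\frac{1}{k}\sum_{i=1}^k \lambda_i^{-\frac{2}{p-1}}\right)^{-\frac{p-1}{2}}.
\end{equation*}
Plugging the previous display into this estimate (using monotonicity of $f$) and collecting powers of $\theta$, $(1-\sigma^2)$, $D$ and $k$ yields the claimed $k^{(p+1)/2}$ growth.

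The main obstacle I anticipate is careful bookkeeping of exponents in the final simplification: in particular, one must verify that $(\theta^{2/(p-1)})^{-(p-1)/2} = \theta^{-1}$ so that exactly one factor of $\theta$ survives on the right-hand side after the inequality is flipped, and that the remaining powers of $(1-\sigma^2)$ and $D$ consolidate into the announced form. The two earlier ingredients (the algebraic constraint and the telescoping descent) are by now standard in the excerpt, so the conceptual work is localized entirely in the Jensen step and its subsequent algebra.
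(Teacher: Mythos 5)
Your proposal is correct and follows essentially the same route as the paper: both combine the large-step condition with the telescoped descent bound from Lemma~\ref{Lemma:CAF-descent} to control $\sum_{i=1}^k \lambda_i^{-2/(p-1)}$, and then convert this into a lower bound on $\sum_{i=1}^k\lambda_i$ via a mean inequality --- you use Jensen for the convex decreasing map $u\mapsto u^{-(p-1)/2}$ where the paper uses H\"{o}lder with exponents $\tfrac{p+1}{p-1}$ and $\tfrac{p+1}{2}$, which are equivalent here and yield the identical estimate. One bookkeeping caveat: the constant that both your argument and the paper's actually produce is $\theta\bigl((1-\sigma^2)/\inf_{z^\star\in A^{-1}(0)}\|x_0-z^\star\|^2\bigr)^{(p-1)/2}$, i.e., with the squared distance in the \emph{denominator} (the displayed lemma statement appears to carry a sign typo in that exponent), so your final ``collecting powers'' step should land on the inverse power of $D$ rather than on $D^{p-1}$.
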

\paragraph{Proof of Theorem~\ref{Theorem:CAF-main}:} For every integer $k \geq 1$, combining Lemma~\ref{Lemma:CAF-descent} and Lemma~\ref{Lemma:CAF-control} implies
\begin{equation*}
\textsc{gap}(\tilde{y}_k) = \sup_{z \in \dom(A)}\sup_{v \in Az}\langle v, \tilde{y}_k - z\rangle \leq \tfrac{1}{2(\sum_{i=1}^k \lambda_i)}\left(\sup_{z \in \dom(A)} \|z - x_0\|^2\right) = O(k^{-\frac{p+1}{2}}). 
\end{equation*}
Combining Lemma~\ref{Lemma:CAF-error} and Lemma~\ref{Lemma:CAF-control}, we have
\begin{align*}
\inf_{1 \leq i \leq k} \sqrt{\lambda_i} \|v_i\| & \leq \sqrt{\tfrac{1+\sigma}{1-\sigma}}\left(\sum_{i=1}^k \lambda_i\right)^{-\frac{1}{2}}\left(\inf_{z^\star \in A^{-1}(0)}\|x_0 - z^\star\|\right) = O(k^{-\frac{p+1}{4}}),  \\ 
\inf_{1 \leq i \leq k} \epsilon_i & \leq \tfrac{\sigma^2}{2(1-\sigma^2)}\left(\sum_{i=1}^k \lambda_i\right)^{-1}\left(\inf_{z^\star \in A^{-1}(0)}\|x_0 - z^\star\|^2\right) = O(k^{-\frac{p+1}{2}}).  
\end{align*}
From Step 2 of Algorithm 1, we have 
\begin{equation*}
\|\lambda_i v_i + y_i - x_{i-1}\|^2 + 2\lambda_i\epsilon_i \leq \sigma^2\|y_i - x_{i-1}\|^2, \quad \lambda_i\|y_i - x_{i-1}\|^{p-1} \geq \theta. 
\end{equation*}
Since $\lambda_i \geq 0$ and $\epsilon_i \geq 0$, the first inequality implies
\begin{equation*}
\|\lambda_i v_i + y_i - x_{i-1}\| \leq \sigma\|y_i - x_{i-1}\|. 
\end{equation*}
By the triangle inequality, we have 
\begin{equation*}
\sigma\|y_i - x_{i-1}\| \geq \|y_i - x_{i-1}\| - \lambda_i\|v_i\| \Longrightarrow \lambda_i\|v_i\| \geq (1-\sigma)\|y_i - x_{i-1}\|. 
\end{equation*}
This inequality together with $\lambda_i\|y_i - x_{i-1}\|^{p-1} \geq \theta$ implies
\begin{equation*}
\lambda_i\|v_i\|^{\frac{p-1}{p}} = (\lambda_i)^{\frac{1}{p}}(\lambda_i\|v_i\|)^{\frac{p-1}{p}} \geq \left(\tfrac{\theta}{\|y_i - x_{i-1}\|^{p-1}}\right)^{\frac{1}{p}}\left((1-\sigma)\|y_i - x_{i-1}\|\right)^{\frac{p-1}{p}} = \theta^{\frac{1}{p}}(1-\sigma)^{\frac{p-1}{p}}. 
\end{equation*}
Equivalently, we have 
\begin{equation*}
\sqrt{\lambda_i} \geq \theta^{\frac{1}{2p}}(1-\sigma)^{\frac{p-1}{2p}}\|v_i\|^{-\frac{p-1}{2p}}. 
\end{equation*}
This implies
\begin{equation*}
\left(\theta^{\frac{1}{2p}}(1-\sigma)^{\frac{p-1}{2p}}\right)\inf_{1 \leq i \leq k} \|v_i\|^{\frac{p+1}{2p}} \leq \inf_{1 \leq i \leq k} \sqrt{\lambda_i} \|v_i\| = O(k^{-\frac{p+1}{4}}) \Longrightarrow \inf_{1 \leq i \leq k} \|v_i\|^{\frac{p+1}{2p}} = O(k^{-\frac{p+1}{4}}). 
\end{equation*}
Therefore, we conclude that 
\begin{equation*}
\inf_{1 \leq i \leq k} \|v_i\| = \left(\inf_{1 \leq i \leq k} \|v_i\|^{\frac{p+1}{2p}}\right)^{\frac{2p}{p+1}} = O(k^{-\frac{p}{2}}). 
\end{equation*}
It remains to prove that the iterates $\{x_k\}_{k \geq 1}$ converge to $A^{-1}(0)$ with a local linear rate under the error bound condition in Eq.~\eqref{def:EB} and that $\epsilon_k = 0$ for all $k \geq 1$.  Indeed,  it follows from the proof of Lemma~\ref{Lemma:CAF-descent} that 
\begin{equation*}
\ECal_k - \ECal_{k+1} \geq \lambda_{k+1} \langle v,  y_{k+1} - z\rangle + \tfrac{1 - \sigma^2}{2}\|x_k - y_{k+1}\|^2, \quad \textnormal{for all } v \in Az. 
\end{equation*}
Recall that $\ECal_k = \frac{1}{2}\|x_k - z\|^2$. Thus, we have
\begin{equation*}
\|x_k - z\|^2 - \|x_{k+1} - z\|^2 \geq 2\lambda_{k+1} \langle v,  y_{k+1} - z\rangle + (1 - \sigma^2)\|x_k - y_{k+1}\|^2, \quad \textnormal{for all } v \in Az. 
\end{equation*}
Here $z \in \Br^d$ can be any point.  Then, we set $z = x_k^\star = \textnormal{argmin}_{x \in A^{-1}(0)} \|x - x_k\|$ and choose $v = 0 \in Az$. Plugging into the above inequality implies 
\begin{equation*}
\|x_k - x_k^\star\|^2 - \|x_{k+1} - x_k^\star\|^2 \geq 2\lambda_{k+1} \langle 0, y_{k+1} - x_k^\star\rangle + (1-\sigma^2)\|x_k - y_{k+1}\|^2 = (1-\sigma^2)\|x_k - y_{k+1}\|^2.  
\end{equation*}
By definition, we have $\|x_{k+1} - x_{k+1}^\star\| \leq \|x_{k+1} - x_k^\star\|$ and $\textsc{dist}(x_k, A^{-1}(0)) = \|x_k - x_k^\star\|$. Putting these pieces together yields that, for all $k \geq 1$, we have 
\begin{equation}\label{inequality:CAF-main-first}
(\textsc{dist}(x_k, A^{-1}(0)))^2 - (\textsc{dist}(x_{k+1}, A^{-1}(0)))^2 \geq (1-\sigma^2)\|x_k - y_{k+1}\|^2. 
\end{equation}
It is worth mentioning that Eq.~\eqref{inequality:CAF-main-first} implies that $\|x_k - y_{k+1}\| \rightarrow 0$.  Using the large step condition that $\lambda_k\|y_k - x_{k-1}\|^{p-1} \geq \theta$, we have $\{\lambda_k\}_{k \geq 1}$ is lower bounded by a constant $\underline{\lambda} > 0$.  Further, we have
\begin{equation*}
\|\lambda_k v_k\| \leq \|\lambda_k v_k + y_k - x_{k-1}\| + \|y_k - x_{k-1}\| \leq (1+\sigma)\|y_k - x_{k-1}\|, 
\end{equation*}
which implies that $\|v_k\| \rightarrow 0$ as $k \rightarrow +\infty$.  Since $\epsilon_k = 0$ for all $k \geq 1$, we have $v_k \in Ay_k$.  So there exists a sufficiently large $k_0 > 0$ such that $\textsc{dist}(0, Ay_k) \leq \delta$ for all $k \geq k_0$ where $\delta > 0$ is defined in the error bound condition (cf. Eq.~\eqref{def:EB}). Since the error bound condition in Eq.~\eqref{def:EB} holds true, we have
\begin{equation*}
\textsc{dist}(y_{k+1}, A^{-1}(0)) \leq \kappa \cdot \textsc{dist}(0, Ay_{k+1}) \leq \kappa \|v_{k+1}\|.  
\end{equation*}
We let $y_{k+1}^\star = \argmin_{y \in A^{-1}(0)} \|y - y_{k+1}\|$ and deduce from the triangle inequality that 
\begin{equation*}
\textsc{dist}(x_k, A^{-1}(0)) \leq \|x_k - y_{k+1}^\star\| \leq \|x_k - y_{k+1}\| + \textsc{dist}(y_{k+1}, A^{-1}(0)) \leq \|x_k - y_{k+1}\| + \kappa \|v_{k+1}\|.
\end{equation*}
Putting these pieces together yields that
\begin{equation}\label{inequality:CAF-main-second}
\textsc{dist}(x_k, A^{-1}(0)) \leq \left(1 + \tfrac{\kappa}{\lambda_{k+1}}\right)\|y_{k+1} - x_k\| \leq \left(1 + \tfrac{\kappa(1 + \sigma)}{\underline{\lambda}}\right)\|y_{k+1} - x_k\|. 
\end{equation} 
Plugging Eq.~\eqref{inequality:CAF-main-second} into Eq.~\eqref{inequality:CAF-main-first} yields that 
\begin{equation*}
(\textsc{dist}(x_k, A^{-1}(0)))^2 - (\textsc{dist}(x_{k+1}, A^{-1}(0)))^2 \geq (1-\sigma^2)\left(\tfrac{\underline{\lambda}}{\kappa(1+\sigma) + \underline{\lambda}}\right)^2(\textsc{dist}(x_k, A^{-1}(0)))^2. 
\end{equation*}
This completes the proof. 
\begin{remark}
The discrete-time analysis in Theorem~\ref{Theorem:CAF-main} is based on the Lyapunov function from Eq.~\eqref{def:Lyapunov-discrete}, which is inspired by the one in Eq.~\eqref{def:Lyapunov} and Eq.~\eqref{def:EB-Lyapunov}. Notably, the proofs of these technical results follow the same path for the continuous-time analysis in Theorem~\ref{Theorem:Trajectory-Convergence-Rate} and~\ref{Theorem:Trajectory-Convergence-Linear}. 
\end{remark}

\subsection{Global acceleration and local linear convergence}
By instantiating Algorithm~\ref{Algorithm:CAF} with approximate tensor subroutines~\citep{Nesterov-2021-Implementable}, we develop a new family of $p^\textnormal{th}$-order tensor algorithms for monotone inclusion problems with $A = F + H$ in which $F \in \GCal_L^p(\br^d)$ is a point-to-point operator and $H$ is simple and maximal monotone. We provide new convergence results concerning these tensor algorithms, including an ergodic rate of $O(k^{-(p+1)/2})$ in terms of a gap function, a pointwise rate of $O(k^{-p/2})$ in terms of a residue function, and establish local linear convergence under an error bound condition. Our results extend those results in~\citet{Monteiro-2012-Iteration} for second-order algorithms for monotone inclusion problems and complement the analysis in~\citet{Bullins-2022-Higher} concerning high-order tensor algorithms for saddle point and variational inequality problems.  

The proximal point algorithm (PPA) (corresponding to implicit discretization of certain systems) requires solving an exact proximal iteration with proximal coefficient $\lambda > 0$ at each iteration:   
\begin{equation}\label{subprob:prox-exact}
y = (I + \lambda (F + H))^{-1}(x).  
\end{equation}
In many application problem, $H = \partial \textbf{1}_\XCal$, where $\partial \textbf{1}_\XCal$ is the subdifferential of an indicator function onto a closed and convex set $\XCal$.  Nevertheless, Eq.~\eqref{subprob:prox-exact} is still hard when the proximal coefficient $\lambda \rightarrow +\infty$. Fortunately, when $F \in \GCal_L^p(\br^d)$, it suffices to solve the subproblem with the $(p-1)^\textnormal{th}$-order approximation of $F$. More specifically, we define   
\begin{equation}\label{def:approximation}
F_x(u) = F(x) + \langle DF(x), u-x\rangle + \sum_{j=2}^{p-1} \tfrac{1}{j!} D^{(j)} F(x)[u-x]^j.   
\end{equation}
Our algorithms of this subsection are based on an inexact solution of the following subproblem: 
\begin{equation}\label{subprob:inexact}
y = (I + \lambda (F_x + H))^{-1}(x). 
\end{equation}
Clearly, the solution $x_v$ of Eq.~\eqref{subprob:inexact} is unique and satisfies $\lambda F_x(x_v) + H(x_v) + x_v - x = 0$. Thus, we denote a \textit{$\hat{\sigma}$-inexact solution} of Eq.~\eqref{subprob:inexact} at $(\lambda, x)$ by a vector $y \in \br^d$ satisfying that $u \in (F_{x'} + H)(y)$ and $\|\lambda u + y - x\| \leq \hat{\sigma}\|y - x\|$ for some $\hat{\sigma} \in (0, 1)$ and $x' = \proj_{\dom(A)}(x)$ (recall $A = F+H$).  
\begin{algorithm}[!t]
\begin{algorithmic}\caption{Accelerated $p^\textnormal{th}$-order Tensor Algorithm}\label{Algorithm:Optimal}
\STATE \textbf{STEP 0:}  Let $x_0 \in \br^d$, $\hat{\sigma} \in (0, 1)$ and $0 < \sigma_l < \sigma_u < 1$ such that $\sigma_l(1+\hat{\sigma})^{p-1} < \sigma_u(1-\hat{\sigma})^{p-1}$ and $\sigma = \hat{\sigma} + \sigma_u < 1$ be given, and set $k=0$. 
\STATE \textbf{STEP 1:} Compute $x'_k = \proj_{\dom(A)}(x_k)$. If $0 \in A(x_k)$, then \textbf{stop}. 
\STATE \textbf{STEP 2:} Otherwise, compute a positive scalar $\lambda_{k+1} > 0$ with a $\hat{\sigma}$-inexact solution $y_{k+1} \in \br^d$ of Eq.~\eqref{subprob:inexact} at $(\lambda_{k+1}, x_k)$ satisfying that
\begin{equation*}
u_{k+1} \in (F_{x'_k} + H)(y_{k+1}), \quad \|\lambda_{k+1}u_{k+1} + y_{k+1} - x_k\| \leq \hat{\sigma}\|y_{k+1} - x_k\|,  
\end{equation*}
and
\begin{equation*}
\tfrac{\sigma_l p!}{L} \leq \lambda_{k+1}\|y_{k+1} - x_k\|^{p-1} \leq \tfrac{\sigma_u p!}{L}. 
\end{equation*}
\STATE \textbf{STEP 3:} Compute $v_{k+1} = F(y_{k+1}) + u_{k+1} - F_{x'_k}(y_{k+1})$. 
\STATE \textbf{STEP 4:} Compute $x_{k+1} = x_k - \lambda_{k+1}v_{k+1}$. 
\STATE \textbf{STEP 5:}  Set $k \leftarrow k+1$, and go to \textbf{STEP 1}. 
\end{algorithmic}
\end{algorithm}

We summarize our accelerated $p^\textnormal{th}$-order tensor algorithm in Algorithm~\ref{Algorithm:Optimal} and prove that it is an application of Algorithm~\ref{Algorithm:CAF} with a specific choice of $\theta$. 
\begin{proposition}\label{Prop:opt}
Algorithm~\ref{Algorithm:Optimal} is Algorithm~\ref{Algorithm:CAF} with $\theta = \frac{\sigma_l p!}{L}$ and $\epsilon_k = 0$ for all $k \geq 1$. 
\end{proposition}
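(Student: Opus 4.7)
}
The plan is a direct bookkeeping exercise: with the assignment $\theta = \frac{\sigma_l p!}{L}$ and $\epsilon_{k+1} = 0$, I verify that every clause produced by STEP 2--3 of Algorithm~\ref{Algorithm:Optimal} is a valid realization of STEP 2 of Algorithm~\ref{Algorithm:CAF}. There are three bullets to match: the enlargement-membership condition $v_{k+1}\in A^{\epsilon_{k+1}}(y_{k+1})$, the relative-error inequality, and the large-step condition.

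First, I would establish the membership $v_{k+1}\in A(y_{k+1})$ so that setting $\epsilon_{k+1}=0$ is legitimate. By construction $u_{k+1}\in(F_{x'_k}+H)(y_{k+1})$, hence $u_{k+1}-F_{x'_k}(y_{k+1})\in H(y_{k+1})$. Substituting into the definition $v_{k+1}=F(y_{k+1})+u_{k+1}-F_{x'_k}(y_{k+1})$ yields $v_{k+1}\in F(y_{k+1})+H(y_{k+1})=A(y_{k+1})\subseteq A^{0}(y_{k+1})$. The large-step condition is then immediate from the lower bound in STEP 2: $\lambda_{k+1}\|y_{k+1}-x_k\|^{p-1}\geq\frac{\sigma_l p!}{L}=\theta$.

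The main technical step is verifying the relative-error inequality, which reduces (since $\epsilon_{k+1}=0$) to $\|\lambda_{k+1}v_{k+1}+y_{k+1}-x_k\|\leq\sigma\|y_{k+1}-x_k\|$. Writing
\begin{equation*}
\lambda_{k+1}v_{k+1}+y_{k+1}-x_k \;=\; \bigl(\lambda_{k+1}u_{k+1}+y_{k+1}-x_k\bigr) + \lambda_{k+1}\bigl(F(y_{k+1})-F_{x'_k}(y_{k+1})\bigr),
\end{equation*}
the triangle inequality together with the $\hat\sigma$-inexactness gives an upper bound $\hat\sigma\|y_{k+1}-x_k\|+\lambda_{k+1}\|F(y_{k+1})-F_{x'_k}(y_{k+1})\|$. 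Since $F\in\GCal_L^p(\br^d)$, the standard Taylor remainder estimate for an operator with $L$-Lipschitz $(p-1)^{\textnormal{th}}$ derivative yields $\|F(y)-F_{x}(y)\|\leq\frac{L}{p!}\|y-x\|^p$. Applied at $x=x'_k$ and combined with the projection inequality $\|y_{k+1}-x'_k\|\leq\|y_{k+1}-x_k\|$ (which holds because $y_{k+1}\in\dom(A)$ and $x'_k=\proj_{\dom(A)}(x_k)$), this bounds the error term by $\frac{L}{p!}\lambda_{k+1}\|y_{k+1}-x_k\|^p$. Factoring $\|y_{k+1}-x_k\|$ and invoking the upper half of the two-sided bound $\lambda_{k+1}\|y_{k+1}-x_k\|^{p-1}\leq\frac{\sigma_u p!}{L}$ collapses this to $\sigma_u\|y_{k+1}-x_k\|$. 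Adding the two contributions produces $(\hat\sigma+\sigma_u)\|y_{k+1}-x_k\|=\sigma\|y_{k+1}-x_k\|$, and squaring gives the inequality of Algorithm~\ref{Algorithm:CAF}'s STEP 2 with $\epsilon_{k+1}=0$. Finally, STEP 3 and STEP 4 of the two algorithms coincide, so the identification is complete.

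The main obstacle I anticipate is purely the proper handling of the projection $x'_k$: since $F_{x'_k}$ is expanded at the projected point rather than at $x_k$, the Taylor remainder controls $\|F(y_{k+1})-F_{x'_k}(y_{k+1})\|$ in terms of $\|y_{k+1}-x'_k\|$, and only the projection contraction $\|y_{k+1}-x'_k\|\leq\|y_{k+1}-x_k\|$ allows the resulting bound to be absorbed into the large-step budget. The assumption $\sigma=\hat\sigma+\sigma_u<1$ stated in STEP 0 is exactly what ensures the resulting relative-error inequality is non-vacuous.
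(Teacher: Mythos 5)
Your proposal is correct and follows essentially the same route as the paper's proof: the same membership argument for $v_{k+1}\in A(y_{k+1})$, the same decomposition of $\lambda_{k+1}v_{k+1}+y_{k+1}-x_k$ into the $\hat\sigma$-inexactness term plus the Taylor remainder $\lambda_{k+1}(F(y_{k+1})-F_{x'_k}(y_{k+1}))$, the same use of the projection contraction $\|y_{k+1}-x'_k\|\leq\|y_{k+1}-x_k\|$, and the same absorption of the remainder via the upper bound $\lambda_{k+1}\|y_{k+1}-x_k\|^{p-1}\leq\frac{\sigma_u p!}{L}$ to arrive at $\sigma=\hat\sigma+\sigma_u$. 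No gaps.
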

\begin{proof}
Letting a tuple $(x_k, v_k, u_k)_{k \geq 1}$ be generated by Algorithm~\ref{Algorithm:Optimal},  it is clear that 
\begin{equation*}
v_{k+1} = F(y_{k+1}) + u_{k+1} - F_{x'_k}(y_{k+1}) \in (F + H)(y_{k+1}). 
\end{equation*}
This is equivalent to that $v_{k+1} \in A^{\epsilon_{k+1}}(y_{k+1})$ in Algorithm~\ref{Algorithm:CAF} with $\varepsilon_k = 0$ for all $k \geq 1$.  Further, since $\theta = \frac{\sigma_l p!}{L} > 0$, we have
\begin{equation*}
\lambda_{k+1}\|y_{k+1} - x_k\|^{p-1} \geq \tfrac{\sigma_l p!}{L} \Longrightarrow \lambda_{k+1}\|y_{k+1} - x_k\|^{p-1} \geq \theta. 
\end{equation*}
It suffices to show that $\|\lambda_{k+1}v_{k+1} + y_{k+1} - x_k\|^2 + 2\lambda_{k+1}\epsilon_{k+1} \leq \sigma^2\|y_{k+1} - x_k\|^2$. Since $\epsilon_k = 0$ for all $k \geq 1$, the above inequality is equivalent to 
\begin{equation}\label{inequality:opt-first}
\|\lambda_{k+1}v_{k+1} + y_{k+1} - x_k\| \leq \sigma\|y_{k+1} - x_k\|. 
\end{equation}
Indeed, we have
\begin{eqnarray*}
\|\lambda_{k+1}v_{k+1} + y_{k+1} - x_k\| & \leq & \lambda_{k+1}\|v_{k+1} - u_{k+1}\| + \|\lambda_{k+1}u_{k+1} + y_{k+1} - x_k\| \\ 
& \leq & \lambda_{k+1}\|F(y_{k+1}) - F_{x'_k}(y_{k+1})\| + \hat{\sigma}\|y_{k+1} - x_k\|. 
\end{eqnarray*}
Using the definition of $F_{x'_k}$ (cf. Eq.~\eqref{def:approximation}) and the fact that $F \in \GCal_L^p(\br^d)$, we have
\begin{equation*}
\lambda_{k+1}\|F(y_{k+1}) - F_{x'_k}(y_{k+1})\| \leq \tfrac{\lambda_{k+1}L}{p!}\|y_{k+1} - x'_k\|^p.
\end{equation*}
Since $y_{k+1} \in \dom(A)$ and $x'_k = \proj_{\dom(A)}(x_k)$, we have $\|y_{k+1} - x'_k\| \leq \|y_{k+1} - x_k\|$. This implies
\begin{equation}\label{inequality:opt-second}
\lambda_{k+1}\|F(y_{k+1}) - F_{x'_k}(y_{k+1})\| \leq \tfrac{\lambda_{k+1}L}{p!}\|y_{k+1} - x_k\|^p.
\end{equation}
Since $\sigma = \hat{\sigma} + \sigma_u < 1$, we have
\begin{equation}\label{inequality:opt-third}
\lambda_{k+1}\|y_{k+1} - x_k\|^{p-1} \leq \tfrac{\sigma_u p!}{L} \Longrightarrow \hat{\sigma} + \tfrac{\lambda_{k+1}L}{p!}\|y_{k+1}-x_k\|^{p-1} \leq \hat{\sigma} + \sigma_u = \sigma. 
\end{equation}
Combing Eq.~\eqref{inequality:opt-second} and Eq.~\eqref{inequality:opt-third}, we have
\begin{equation*}
\lambda_{k+1}\|F(y_{k+1}) - F_{x'_k}(y_{k+1})\| + \hat{\sigma}\|y_{k+1} - x_k\| \leq \left(\tfrac{\lambda_{k+1}L}{p!}\|y_{k+1}-x_k\|^{p-1} + \hat{\sigma}\right)\|y_{k+1} - x_k\| \leq \sigma\|y_{k+1} - x_k\|. 
\end{equation*}
Putting these pieces together yields the desired equation in Eq.~\eqref{inequality:opt-first}.  
\end{proof}
In view of Proposition~\ref{Prop:opt}, the iteration complexity derived for Algorithm~\ref{Algorithm:CAF} holds for Algorithm~\ref{Algorithm:Optimal}. Furthermore, we have $v_k \in (F + H)(y_k) = Ay_k$ for all $k \geq 1$ which implies (see the definition of a residue function in Eq.~\eqref{def:residue-function})
\begin{equation*}
\textsc{res}(y_k) = \inf_{\xi \in Ay_k} \|\xi\| \leq \|v_k\|. 
\end{equation*}
As a consequence of Theorem~\ref{Theorem:CAF-main}, we summarize the results in the following theorem. 
\begin{theorem}\label{Theorem:Optimal-main}
For every integer $k \geq 1$ and let $\dom(A)$ be closed and bounded. Then,  we have
\begin{equation*}
\textsc{gap}(\tilde{y}_k) = O(k^{-\frac{p+1}{2}}), 
\end{equation*}
and 
\begin{equation*}
\textsc{res}(y_k) = O(k^{-\frac{p}{2}}), 
\end{equation*}
where the ergodic iterates $\{\tilde{y}_k\}_{k \geq 1}$ are defined by 
\begin{equation*}
\tilde{y}_k = \tfrac{1}{\sum_{i=1}^k \lambda_k}\left(\sum_{i=1}^k \lambda_i y_i\right). 
\end{equation*}
In addition, if we assume that the error bound condition in Eq.~\eqref{def:EB} holds true, the iterates $\{x_k\}_{k \geq 1}$ converge to $A^{-1}(0)$ with a local linear rate. 
\end{theorem}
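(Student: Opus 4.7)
The plan is to derive Theorem~\ref{Theorem:Optimal-main} as a direct corollary of Theorem~\ref{Theorem:CAF-main}, using Proposition~\ref{Prop:opt} as the bridge. Proposition~\ref{Prop:opt} has already established that Algorithm~\ref{Algorithm:Optimal} is an instance of the conceptual framework Algorithm~\ref{Algorithm:CAF} with parameter $\theta = \sigma_l p!/L$ and with exact subroutine (i.e., $\epsilon_k = 0$ for every $k \geq 1$). Consequently, essentially no new analysis is required from scratch; the work consists in matching notation and observing that the vanishing error tolerance strengthens the conclusions of Theorem~\ref{Theorem:CAF-main} into the form advertised in Theorem~\ref{Theorem:Optimal-main}.

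First, for the ergodic gap bound, I would invoke the first assertion of Theorem~\ref{Theorem:CAF-main} applied to the iterates generated by Algorithm~\ref{Algorithm:Optimal}. Since $\dom(A)$ is closed and bounded by hypothesis, we obtain $\textsc{gap}(\tilde{y}_k) = O(k^{-(p+1)/2})$ for the ergodic average $\tilde{y}_k = (\sum_{i=1}^k \lambda_i)^{-1} \sum_{i=1}^k \lambda_i y_i$, which is the same object appearing in the theorem statement.

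Second, for the residue function bound, the key observation is that with $\epsilon_k = 0$ we have $v_k \in A y_k$ exactly, not merely $v_k \in A^{\epsilon_k}(y_k)$; this is the content of the identity $v_{k+1} = F(y_{k+1}) + u_{k+1} - F_{x'_k}(y_{k+1}) \in (F+H)(y_{k+1})$ derived in the proof of Proposition~\ref{Prop:opt}. Hence $\textsc{res}(y_k) = \inf_{\xi \in A y_k} \|\xi\| \leq \|v_k\|$, and the best-iterate bound $\inf_{1 \leq i \leq k} \|v_i\| = O(k^{-p/2})$ from Theorem~\ref{Theorem:CAF-main} transfers directly to $\inf_{1 \leq i \leq k} \textsc{res}(y_i) = O(k^{-p/2})$. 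Finally, for local linear convergence under the error bound condition~\eqref{def:EB}, the last assertion of Theorem~\ref{Theorem:CAF-main} applies verbatim, since its only extra hypothesis ($\epsilon_k = 0$ for all $k$) is met by construction.

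The main obstacle is not really an obstacle at all: the reduction via Proposition~\ref{Prop:opt} does all the structural work. A minor subtlety worth spelling out is that Algorithm~\ref{Algorithm:Optimal} imposes an additional two-sided large-step condition $\sigma_l p!/L \leq \lambda_{k+1}\|y_{k+1} - x_k\|^{p-1} \leq \sigma_u p!/L$, whereas Algorithm~\ref{Algorithm:CAF} only demands the lower bound $\lambda_{k+1}\|y_{k+1}-x_k\|^{p-1} \geq \theta$; the upper bound is used inside Proposition~\ref{Prop:opt} to absorb the tensor approximation error $\|F(y_{k+1}) - F_{x'_k}(y_{k+1})\|$ into $\sigma < 1$, but plays no role at the level of Theorem~\ref{Theorem:CAF-main}. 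Thus the whole proof reduces to citing Proposition~\ref{Prop:opt} and Theorem~\ref{Theorem:CAF-main} and noting $\textsc{res}(y_k) \leq \|v_k\|$.
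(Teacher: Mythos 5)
Your proposal is correct and is essentially identical to the paper's own (very short) argument: the paper likewise reduces Theorem~\ref{Theorem:Optimal-main} to Theorem~\ref{Theorem:CAF-main} via Proposition~\ref{Prop:opt}, and obtains the residue bound from $v_k \in (F+H)(y_k) = Ay_k$ so that $\textsc{res}(y_k) \leq \|v_k\|$. Your added remark that the bound is really a best-iterate bound $\inf_{1 \leq i \leq k}\textsc{res}(y_i) = O(k^{-p/2})$ is if anything slightly more careful than the paper's phrasing.
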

\begin{remark}
The ergodic and pointwise convergence results in Theorem~\ref{Theorem:Optimal-main} have been obtained in~\citet[Theorem~3.5 and~3.6]{Monteiro-2012-Iteration} for the case of $p=2$ and derived by~\citet{Nemirovski-2004-Prox} and~\citet{Monteiro-2010-Complexity} for the extragradient method (the case of $p=1$). For $p \geq 3$ in general, these global convergence results generalize~\citet[Theorem~4.5]{Bullins-2022-Higher} from saddle point and variational inequality problems to monotone inclusion problems. The local linear convergence results under an error bound are well known for the extragradient method in the literature~\citep{Tseng-1995-Linear,Monteiro-2010-Complexity} but are new for the case of $p \geq 2$ to our knowledge. 
\end{remark}
\begin{remark}
The approximate tensor subroutine in Algorithm~\ref{Algorithm:Optimal} can be efficiently implemented using binary search procedures specialized to the case of $p=2$; see~\citet[Section~4]{Monteiro-2012-Iteration} and~\citet[Section~5]{Bullins-2022-Higher}.  Could we generalize this scheme to handle the more general case of $p \geq 3$, similar to what has been accomplished in convex optimization~\citep{Gasnikov-2019-Near}? We leave the answer to this question to future work.
\end{remark}

\section{Conclusions}\label{sec:conclusions}
We propose a new closed-loop control system for capturing the acceleration phenomenon in monotone inclusion problems. In terms of theoretical guarantee, we obtain ergodic and pointwise convergence rates via appeal to simple and intuitive Lyapunov functions. Our framework based on implicit discretization of the aforementioned system gives a systematic way to derive  discrete-time $p^\textnormal{th}$-order accelerated tensor algorithms for all $p \geq 1$ and simplify existing analyses via the use of a discrete-time Lyapunov function. Key to our framework is the algebraic equation, which disappears for the case of $p = 1$, but is essential for achieving the acceleration for the case of $p \geq 2$. We also infer that a certain class of $p^\textnormal{th}$-order tensor algorithms can achieve local linear convergence under an error bound condition. 

It is worth mentioning that our closed-loop control system is related to the nonlinear damping in the PDE literature where the closed-loop feedback control in fact depends on the velocity~\citep{Attouch-2022-Fast}; indeed, it is demonstrated by the algebraic equation $\lambda(t)\|\dot{x}(t)\|^{p-1} = \theta$.

There are several other avenues for future research.  For example, it is interesting to study the monotone inclusion problems via appeal to the Lagrangian and Hamiltonian frameworks that have proved productive in recent work~\citep{Wibisono-2016-Variational,Diakonikolas-2021-Generalized,Muehlebach-2021-Optimization,Francca-2021-Dissipative}. Moreover, we would hope for this study to provide additional insight into the geometric or dynamical role played by the algebraic equation for shaping the continuous-time dynamics. Indeed, it is of interest to investigate the continuous-time limit of Newton methods for Bouligand-differentiable equations~\citep{Robinson-1987-Local},  which is another generalization of complementarity and VI problems, and see whether the closed-loop control approach leads to efficient algorithms or not.   

% Acknowledgements should only appear in the accepted version.
\section*{Acknowledgments}
The authors would like to thank the associate editor and the reviewer for their extremely valuable comments and constructive feedback that led to significant improvement of the current paper. This work was supported in part by the Mathematical Data Science program of the Office of Naval Research under grant number N00014-18-1-2764 and by the Vannevar Bush Faculty Fellowship program
under grant number N00014-21-1-2941.

\bibliographystyle{plainnat}
\bibliography{ref}

\appendix
\section{Proof of Lemma~\ref{Lemma:AE-mapping-Lipschitz}}
For simplicity, we denote by $A_\lambda = I - (I + \lambda A)^{-1}$ and write $\varphi(\lambda, x) = \lambda^{1/(p-1)}\|A_\lambda x\|$. Then it suffices to show
\begin{equation}\label{inequality:AE-mapping-Lipschitz-main}
\left|\|A_\lambda x_1\| - \|A_\lambda x_2\|\right| \leq \|x_1 - x_2\|. 
\end{equation}
It is known in convex analysis (see~\cite{Rockafellar-1970-Convex} for example) that $\|A_\lambda x_1 - A_\lambda x_2\| \leq \|x_1 - x_2\|$. This together with the triangle inequality yields Eq.~\eqref{inequality:AE-mapping-Lipschitz-main}. 

\section{Proof of Lemma~\ref{Lemma:AE-mapping-monotone}}
For simplicity, we define  $z_1 = (I + \lambda_1 A)^{-1}x$, $z_2 = (I + \lambda_2 A)^{-1}x$, $v_1 \in Az_1$ and $v_2 \in Az_2$. In view of the definitions, we have
\begin{equation}\label{inequality:AE-mapping-monotone-first}
\begin{array}{lcl}
\varphi(\lambda_1, x) = (\lambda_1)^{\frac{1}{p-1}}\|x - z_1\|, & & \lambda_1 v_1 + z_1 - x = 0,  \\ 
\varphi(\lambda_2, x) = (\lambda_2)^{\frac{1}{p-1}}\|x - z_2\|, & & \lambda_2 v_2 + z_2 - x = 0.  
\end{array}
\end{equation}
After straightforward calculation, we have
\begin{equation*}
\lambda_1(v_1 - v_2) + z_1 - z_2 = (\lambda_1 v_1 + z_1 - x) - (\lambda_2 v_2 + z_2 - x) + (\lambda_2 - \lambda_1)v_2 = (\lambda_2 - \lambda_1)v_2. 
\end{equation*}
Since $A$ is a maximal monotone operator, $v_1 \in Az_1$ and $v_2 \in Az_2$, we have $\langle v_1 - v_2, z_1 - z_2\rangle \geq 0$.  Putting these pieces together yields
\begin{equation*}
(\lambda_2 - \lambda_1)\langle v_1 - v_2, v_2 \rangle = \lambda_1\|v_1 - v_2\|^2 + \langle v_1 - v_2, z_1 - z_2\rangle \geq 0. 
\end{equation*}
This together with $\lambda_1 \leq \lambda_2$ implies that $\langle v_1 - v_2, v_2 \rangle \geq 0$ and thus we have $\|v_1\| \geq \|v_2\|$.  Combining the last inequality with Eq.~\eqref{inequality:AE-mapping-monotone-first}, we have
\begin{equation}\label{inequality:AE-mapping-monotone-second}
\varphi(\lambda_2, x) = (\lambda_2)^{\frac{1}{p-1}}\|\lambda_2 v_2\| = (\lambda_2)^{\frac{p}{p-1}}\|v_2\| \leq (\lambda_2)^{\frac{p}{p-1}}\|v_1\| = \left(\tfrac{\lambda_2}{\lambda_1}\right)^{\frac{p}{p-1}}\varphi(\lambda_1, x). 
\end{equation}
After a short calculation, we have
\begin{equation*}
v_1 - v_2 + \tfrac{1}{\lambda_2}(z_1 - z_2) = \tfrac{1}{\lambda_1}(\lambda_1 v_1 + z_1 - x) - \tfrac{1}{\lambda_2}(\lambda_2 v_2 + z_2 - x) + \left(\tfrac{1}{\lambda_2} - \tfrac{1}{\lambda_1}\right)(z_1 - x) = \left(\tfrac{1}{\lambda_2} - \tfrac{1}{\lambda_1}\right)(z_1 - x).  
\end{equation*}
Since $\langle v_1 - v_2, z_1 - z_2\rangle \geq 0$, we have
\begin{equation*}
\left(\tfrac{1}{\lambda_2} - \tfrac{1}{\lambda_1}\right)\langle z_1 - x, z_1 - z_2\rangle = \langle v_1 - v_2, z_1 - z_2\rangle + \tfrac{1}{\lambda_2}\|z_1 - z_2\|^2 \geq 0. 
\end{equation*}
This together with $\lambda_1 \leq \lambda_2$ implies that $\langle z_1 - x, z_1 - z_2\rangle \leq 0$ and thus we have $\|x - z_2\| \geq \|x - z_1\|$.  Combining the last inequality with Eq.~\eqref{inequality:AE-mapping-monotone-first}, we have
\begin{equation}\label{inequality:AE-mapping-monotone-third}
\varphi(\lambda_2, x) = (\lambda_2)^{\frac{1}{p-1}}\|x - z_2\| \geq (\lambda_2)^{\frac{1}{p-1}}\|x - z_1\| = \left(\tfrac{\lambda_2}{\lambda_1}\right)^{\frac{1}{p-1}}\varphi(\lambda_1, x). 
\end{equation}
Combining Eq.~\eqref{inequality:AE-mapping-monotone-second} and Eq.~\eqref{inequality:AE-mapping-monotone-third} yields the desired inequality.  The last statement of the lemma follows trivially from the maximal monotonicity of $A$ and the definition of $\varphi$. 

\section{Proof of Lemma~\ref{Lemma:control-Lipschitz}}
Rearranging the first inequality in Lemma~\ref{Lemma:AE-mapping-monotone} implies
\begin{equation*}
\tfrac{\varphi(\lambda_1, x)}{(\lambda_1)^{\frac{1}{p-1}}} \leq \tfrac{\varphi(\lambda_2, x)}{(\lambda_2)^{\frac{1}{p-1}}}, \quad \textnormal{for any } x \in \HCal \textnormal{ and } 0 < \lambda_1 \leq \lambda_2. 
\end{equation*}
This yields that the mapping $\lambda \mapsto \|x - (I + \lambda A)^{-1}x\|$ is nondecreasing and $\alpha \mapsto \|x - (I + \alpha^{-1} A)^{-1}x\|$ is a (continuous) nonincreasing function. As a consequence, we obtain that $\Gamma_\theta$ is a real-valued nonnegative function.  Further, by definition, if $x \in \Omega$, we have
\begin{eqnarray*}
\Gamma_\theta(x) \ = \ \left(\inf\{ \alpha > 0 \mid \alpha^{-\frac{1}{p-1}}\|x - (I + \alpha^{-1}A)^{-1}x\| \leq \theta^{\frac{1}{p-1}}\}\right)^{\frac{1}{p-1}} = \left(\tfrac{1}{\Lambda_\theta(x)}\right)^{\frac{1}{p-1}}. 
\end{eqnarray*}
Moreover, if $x \notin \Omega$,  we have $x - (I + \alpha^{-1}A)^{-1}x = 0$ for all $\alpha > 0$ which implies that $\Gamma_\theta(x) = 0$.  Putting these pieces together yields the desired relationship between $\Gamma_\theta(x)$ and $\Lambda_\theta(x)$. 

It remains to show that $\Gamma_\theta: \HCal \mapsto (0, +\infty)$ is Lipschitz continuous with $\theta^{-1/(p-1)} > 0$.  Take $x_1, x_2 \in \HCal$ and $\bar{\alpha} > 0$, we suppose that
\begin{equation*}
\|x_1 - (I + \bar{\alpha}^{-1}A)^{-1}x_1\| \leq \bar{\alpha}^{\frac{1}{p-1}} \theta^{\frac{1}{p-1}}. 
\end{equation*}
It is straightforward to deduce that $I - (I + \bar{\alpha}^{-1}A)^{-1}$ is 1-Lipschitz continuous (see~\cite{Rockafellar-1970-Convex} or the proof of Lemma~\ref{Lemma:AE-mapping-Lipschitz}) which implies
\begin{equation*}
\|x_2 - (I + \bar{\alpha}^{-1}A)^{-1}x_2\| - \|x_1 - (I + \bar{\alpha}^{-1}A)^{-1}x_1\| \leq \|x_2 - x_1\|. 
\end{equation*}
Therefore, we have
\begin{equation*}
\|x_2 - (I + \bar{\alpha}^{-1}A)^{-1}x_2\| \leq \bar{\alpha}^{\frac{1}{p-1}} \theta^{\frac{1}{p-1}} + \|x_2 - x_1\| = (\bar{\alpha}^{\frac{1}{p-1}} + \|x_2 - x_1\|\theta^{-\frac{1}{p-1}})\theta^{\frac{1}{p-1}}. 
\end{equation*}
Let $\bar{\beta} = (\bar{\alpha}^{1/(p-1)} + \|x_2 - x_1\|\theta^{-1/(p-1)})^{p-1}$. Since $\bar{\beta} \geq \bar{\alpha}$ and the mapping $\alpha \mapsto \|x - (I + \alpha^{-1} A)^{-1}x\|$ is nonincreasing,  we have
\begin{equation*}
\|x_2 - (I + \bar{\beta}^{-1}A)^{-1}x_2\| \leq \|x_2 - (I + \bar{\alpha}^{-1}A)^{-1}x_2\| \leq \bar{\beta}^{\frac{1}{p-1}}\theta^{\frac{1}{p-1}}. 
\end{equation*}
By the definition of $\Gamma_\theta$, we have $\Gamma_\theta(x_2) \leq \bar{\beta}^{1/(p-1)} = \bar{\alpha}^{1/(p-1)} + \|x_2 - x_1\|\theta^{-1/(p-1)}$ and this inequality holds true for all $\bar{\alpha} > 0$ satisfying that $\|x_1 - (I + \bar{\alpha}^{-1}A)^{-1}x_1\| \leq \bar{\alpha}^{1/(p-1)}\theta^{1/(p-1)}$; that is, all $\bar{\alpha} > 0$ satisfying that $\bar{\alpha}^{1/(p-1)} \geq \Gamma_\theta(x_1)$. Putting these pieces together, we have
\begin{equation*}
\Gamma_\theta(x_2) \leq \Gamma_\theta(x_1) + \|x_2 - x_1\|\theta^{-\frac{1}{p-1}}. 
\end{equation*} 
Using the symmetry of $x_1$ and $x_2$, we have $\Gamma_\theta(x_1) \leq \Gamma_\theta(x_2) + \|x_1 - x_2\|\theta^{-1/(p-1)}$. Therefore, we have
\begin{equation*}
|\Gamma_\theta(x_1) - \Gamma_\theta(x_2)| \leq \theta^{-\frac{1}{p-1}}\|x_1 - x_2\|. 
\end{equation*}
This completes the proof. 

\section{Proof of Lemma~\ref{Lemma:control-first}}
For the case of $p = 1$, we have $\lambda(t) = \theta$ is a constant function and the desired result holds true.  For the case of $p \geq 2$, let $t, t' \in [0, t_0]$ and $t \neq t'$. Then, we have
\begin{equation*}
\left|(\lambda(t'))^{\frac{1}{p-1}} - (\lambda(t))^{\frac{1}{p-1}}\right| = (\lambda(t')\lambda(t))^{\frac{1}{p-1}}\left|\left(\tfrac{1}{\lambda(t)}\right)^{\frac{1}{p-1}} - \left(\tfrac{1}{\lambda(t')}\right)^{\frac{1}{p-1}}\right|. 
\end{equation*}
Using the definition of $\Gamma_\theta(\cdot)$ and the fact that it is Lipschitz continuous with a constant $\theta^{-1/(p-1)} > 0$ (cf. Lemma~\ref{Lemma:control-Lipschitz}), we have
\begin{equation*}
\left|\left(\tfrac{1}{\lambda(t)}\right)^{\frac{1}{p-1}} - \left(\tfrac{1}{\lambda(t')}\right)^{\frac{1}{p-1}}\right| = |\Gamma_\theta(x(t)) - \Gamma_\theta(x(t'))| \leq \tfrac{\|x(t) - x(t')\|}{\theta^{1/(p-1)}}. 
\end{equation*}
Putting these pieces together yields 
\begin{equation*}
\left|\tfrac{(\lambda(t'))^{\frac{1}{p-1}} - (\lambda(t))^{\frac{1}{p-1}}}{t' - t}\right| \leq \left(\tfrac{\lambda(t')\lambda(t)}{\theta}\right)^{\frac{1}{p-1}}\tfrac{\|x(t) - x(t')\|}{|t - t'|}.
\end{equation*}
Fix $t \in [0, t_0]$ and let $t' \rightarrow t$. Then, we have
\begin{equation*}
\limsup_{t' \rightarrow t} \left|\tfrac{(\lambda(t'))^{\frac{1}{p-1}} - (\lambda(t))^{\frac{1}{p-1}}}{t' - t}\right| \leq \left(\tfrac{(\lambda(t))^2}{\theta}\right)^{\frac{1}{p-1}}\|\dot{x}(t)\|. 
\end{equation*}
Using Eq.~\eqref{sys:general} and Eq.~\eqref{sys:choice-feedback}, we have
\begin{equation*}
\|\dot{x}(t)\| = \|(I + \lambda(t)A)^{-1}x(t) - x(t)\| = \left(\tfrac{\theta}{\lambda(t)}\right)^{\frac{1}{p-1}}. 
\end{equation*}
In addition, for almost all $t \in [0, t_0]$, we have
\begin{equation*}
\limsup_{t' \rightarrow t} \left|\tfrac{(\lambda(t'))^{\frac{1}{p-1}} - (\lambda(t))^{\frac{1}{p-1}}}{t' - t}\right| \geq \tfrac{1}{p-1}|\dot{\lambda}(t)(\lambda(t))^{\frac{1}{p-1}-1}|. 
\end{equation*}
Putting these pieces together yields the desired result. 

\section{Proof of Lemma~\ref{Lemma:control-second}}
For the case of $p = 1$, we have $\lambda(t) = \theta$ is a constant function and the desired result holds true.  For the case of $p \geq 2$,  since $\lambda(\cdot)$ is locally Lipschitz continuous, it suffices to show that $\dot{\lambda}(t) \geq 0$ for almost all $t \in [0, t_0)$. Indeed, let $y(t) = (I + \lambda(t)A)^{-1}x(t)$,  we deduce from Eq.~\eqref{sys:general} that $\dot{x}(t) = y(t) - x(t)$. Then, for any fixed $t \in (0, t_0)$, we have $0 < h < \min\{t_0 - t, 1\}$ exists and the following inequality holds: 
\begin{equation*}
x(t) + h\dot{x}(t) = (1- h)x(t) + hy(t) \Longrightarrow x(t) + h\dot{x}(t) - y(t) = (1- h)(x(t) - y(t)).  
\end{equation*}
By the definition of $y(\cdot)$, we have $\frac{1}{\lambda(t)}(x(t) - y(t)) \in Ay(t)$.  Combining this with the above equality yields that $y(t) = (I + (1-h)\lambda(t)A)^{-1}(x(t) + h\dot{x}(t))$.  Then, by the definition of $\varphi$, we have
\begin{eqnarray*}
\lefteqn{ \varphi((1-h)\lambda(t), x(t) + h\dot{x}(t))} \\
& = & (1-h)^{\frac{1}{p-1}}(\lambda(t))^{\frac{1}{p-1}}\|x(t) + h\dot{x}(t) - (I + (1-h)\lambda(t)A)^{-1}(x(t) + h\dot{x}(t))\| \\
& = & (1-h)^{\frac{1}{p-1}}(\lambda(t))^{\frac{1}{p-1}}\|x(t) + h\dot{x}(t) - y(t)\| \\
& = & (1-h)^{\frac{p}{p-1}}(\lambda(t))^{\frac{1}{p-1}}\|x(t) - y(t)\|. 
\end{eqnarray*}
In addition, Eq.~\eqref{sys:choice-feedback} implies that $(\lambda(t))^{1/(p-1)}\|x(t) - y(t)\| = \theta^{1/(p-1)}$. Putting these pieces together yields
\begin{equation}\label{inequality:control-first}
\varphi((1-h)\lambda(t), x(t) + h\dot{x}(t)) = (1-h)^{\frac{p}{p-1}}\theta^{\frac{1}{p-1}}. 
\end{equation}
Using the triangle inequality and Lemma~\ref{Lemma:AE-mapping-Lipschitz}, we have
\begin{eqnarray}\label{inequality:control-second}
\varphi(\lambda(t), x(t+h)) & \leq & \varphi(\lambda(t), x(t) + h\dot{x}(t)) + |\varphi(\lambda(t), x(t+h)) - \varphi(\lambda(t), x(t) + h\dot{x}(t))| \nonumber \\
& \leq & \varphi(\lambda(t), x(t) + h\dot{x}(t)) + (\lambda(t))^{\frac{1}{p-1}}\| x(t+h) - x(t) - h\dot{x}(t)\|. 
\end{eqnarray}
Using the second inequality in Lemma~\ref{Lemma:AE-mapping-monotone} and $0 < h < 1$, we have
\begin{equation}\label{inequality:control-third}
\varphi(\lambda(t), x(t) + h\dot{x}(t)) \leq \left(\tfrac{1}{1-h}\right)^{\frac{p}{p-1}}\varphi((1-h)\lambda(t), x(t) + h\dot{x}(t)) \overset{\textnormal{Eq.~\eqref{inequality:control-first}}}{=} \theta^{\frac{1}{p-1}}. 
\end{equation}
For the ease of presentation, we define the function $\omega: (0, \min\{t_0 - t, 1\}) \mapsto (0, +\infty)$ by 
\begin{equation*}
\omega(h) = \left(\tfrac{\lambda(t)\|x(t+h) - x(t) - h\dot{x}(t)\|^{p-1}}{\theta}\right)^{\frac{1}{p-1}}. 
\end{equation*}
Plugging Eq.~\eqref{inequality:control-third} into Eq.~\eqref{inequality:control-second} and simplifying the resulting inequality using the definition of $\omega(\cdot)$ yields
\begin{equation*}
\varphi(\lambda(t), x(t+h)) \leq \theta^{\frac{1}{p-1}}\left(1 + \omega(h)\right). 
\end{equation*}
Using the first inequality in Lemma~\ref{Lemma:AE-mapping-monotone} and $\omega(h) \geq 0$ for all $h \in (0, \min\{t_0 - t, 1\})$, we have
\begin{equation*}
\varphi\left(\tfrac{\lambda(t)}{(1+\omega(h))^{p-1}}, x(t+h)\right) \leq \left(\tfrac{1}{(1+\omega(h))^{p-1}}\right)^{\frac{1}{p-1}}\varphi(\lambda(t), x(t+h)). 
\end{equation*}
Putting these pieces together yields
\begin{equation*}
\varphi\left(\tfrac{\lambda(t)}{(1+\omega(h))^{p-1}}, x(t+h)\right) \leq \theta^{\frac{1}{p-1}}. 
\end{equation*}
Since $\varphi(\cdot, x(t+h))$ is increasing and $\varphi(\lambda(t+h), x(t+h)) = \theta^{1/(p-1)}$, we have $\lambda(t+h) \geq \frac{\lambda(t)}{(1+\omega(h))^{p-1}}$.  Equivalently, we have
\begin{equation*}
\liminf_{h \rightarrow 0^+} \tfrac{\lambda(t+h) - \lambda(t)}{h} \geq - \lim_{h \rightarrow 0^+} \tfrac{\lambda(t)}{(1+\omega(h))^{p-1}} \cdot \tfrac{(1+\omega(h))^{p-1} - 1}{h}. 
\end{equation*}
By the definition of $\omega(h)$ and using the continuity of $x(\cdot)$, we have $\omega(h) \rightarrow 0$ as $h \rightarrow 0^+$. Since $p \geq 2$ is an integer, we have
\begin{equation*}
(1+\omega(h))^{p-1} - 1 = \sum_{i=1}^{p-1} \tfrac{(p-1)!}{i!(p-1-i)!}(\omega(h))^i = \omega(h)\left(p-1 + \sum_{i=1}^{p-2} \tfrac{(p-1)!}{(i+1)!(p-2-i)!}(\omega(h))^i\right). 
\end{equation*}
Further, we have 
\begin{equation*}
\tfrac{\omega(h)}{h} = \left(\tfrac{\lambda(t)}{\theta}\right)^{\frac{1}{p-1}}\tfrac{\|x(t+h) - x(t) - h\dot{x}(t)\|}{h} \rightarrow 0, \quad \textnormal{as } h \rightarrow 0^+. 
\end{equation*}
Putting these pieces together yields
\begin{equation*}
\tfrac{\lambda(t)}{(1+\omega(h))^{p-1}} \rightarrow \lambda(t), \quad \tfrac{(1+\omega(h))^{p-1} - 1}{h} \rightarrow 0, \quad \textnormal{as } h \rightarrow 0^+. 
\end{equation*}
Therefore, we conclude that $\dot{\lambda}(t) \geq 0$ for almost all $t \in [0, t_0)$ by achieving
\begin{equation*}
\liminf_{h \rightarrow 0^+} \tfrac{\lambda(t+h) - \lambda(t)}{h} \geq 0. 
\end{equation*}
This completes the proof.

\section{Proof of Lemma~\ref{Lemma:Lyapunov-descent}}
By the definition, we have
\begin{equation*}
\tfrac{d\ECal(t)}{dt} =  \langle\dot{x}(t), x(t) - z\rangle. 
\end{equation*}
In addition,  Eq.~\eqref{sys:general} implies that $\dot{x}(t) = - x(t) + (I + \lambda(t)A)^{-1}x(t)$.  Then, we have
\begin{equation}\label{inequality:Lyapunov-descent-first}
\tfrac{d\ECal(t)}{dt} = -\|x(t) - (I + \lambda(t)A)^{-1}x(t)\|^2 - \langle x(t) - (I + \lambda(t)A)^{-1}x(t), (I + \lambda(t)A)^{-1}x(t) - z\rangle.  
\end{equation}
Letting $y(t) = (I + \lambda(t)A)^{-1}x(t)$, we have $\frac{1}{\lambda(t)}(x(t) - y(t)) \in Ay(t)$. Since $z \in A^{-1}(0)$, we have $0 \in Az$. By the monotonicity of $A$, we have
\begin{equation*}
\tfrac{1}{\lambda(t)}\langle x(t) - y(t), y(t) - z\rangle \geq 0.  
\end{equation*}
Using $\lambda(t) > 0$ and the definition of $y(t)$, we have
\begin{equation}\label{inequality:Lyapunov-descent-second}
\langle x(t) - (I + \lambda(t)A)^{-1}x(t), (I + \lambda(t)A)^{-1}x(t) - z\rangle \geq 0. 
\end{equation}
Plugging Eq.~\eqref{inequality:Lyapunov-descent-second} into Eq.~\eqref{inequality:Lyapunov-descent-first} yields the desired inequality. 

\section{Proof of Lemma~\ref{Lemma:CAF-descent}}
It suffices to prove the first inequality in Eq.~\eqref{inequality:CAF-descent-main} which implies the other results. Indeed, we have
\begin{eqnarray}\label{inequality:CAF-descent-first}
\lefteqn{\ECal_k - \ECal_{k+1} = \langle x_k - x_{k+1},  x_{k+1} - z\rangle + \tfrac{1}{2}\|x_{k+1} - x_k\|^2} \\
& = & \langle x_k - x_{k+1},  y_{k+1} - z\rangle + \langle x_k - x_{k+1},  x_{k+1} - y_{k+1}\rangle + \tfrac{1}{2}\|x_{k+1} - x_k\|^2 \nonumber \\
& = & \underbrace{\langle x_k - x_{k+1},  y_{k+1} - z\rangle}_{\textbf{I}} + \tfrac{1}{2}\left(\underbrace{\|x_k - y_{k+1}\|^2 - \|x_{k+1} - y_{k+1}\|^2}_{\textbf{II}}\right). \nonumber 
\end{eqnarray}
Using the update $x_{k+1} = x_k - \lambda_{k+1}v_{k+1}$ and letting $v \in Az$, we have
\begin{equation*}
\textbf{I} = \lambda_{k+1} \langle v_{k+1},  y_{k+1} - z\rangle = \lambda_{k+1} \langle v_{k+1} - v,  y_{k+1} - z\rangle + \lambda_{k+1} \langle v,  y_{k+1} - z\rangle. 
\end{equation*}
Using $v_{k+1} \in A^{\epsilon_{k+1}}y_{k+1}$ and Eq.~\eqref{def:enlargement}, we have $\langle v_{k+1} - v,  y_{k+1} - z\rangle \geq -\epsilon_{k+1}$.  This implies
\begin{equation}\label{inequality:CAF-descent-second}
\textbf{I} \geq \lambda_{k+1} \langle v,  y_{k+1} - z\rangle - \lambda_{k+1}\epsilon_{k+1}. 
\end{equation}
Since $x_{k+1} = x_k - \lambda_{k+1}v_{k+1}$ and $\|\lambda_{k+1}v_{k+1} + y_{k+1} - x_k\|^2 + 2\lambda_{k+1}\epsilon_{k+1} \leq \sigma^2\|y_{k+1} - x_k\|^2$, we have
\begin{equation}\label{inequality:CAF-descent-third}
\textbf{II} = \|x_k - y_{k+1}\|^2 - \|\lambda_{k+1}v_{k+1} + y_{k+1} - x_k\|^2 \geq (1 - \sigma^2)\|x_k - y_{k+1}\|^2 + 2\lambda_{k+1}\epsilon_{k+1}. 
\end{equation}
Plugging Eq.~\eqref{inequality:CAF-descent-second} and Eq.~\eqref{inequality:CAF-descent-third} into Eq.~\eqref{inequality:CAF-descent-first}, we have
\begin{equation*}
\ECal_k - \ECal_{k+1} \geq \lambda_{k+1} \langle v,  y_{k+1} - z\rangle + \tfrac{1 - \sigma^2}{2}\|x_k - y_{k+1}\|^2,
\end{equation*}
which implies the desired inequality. 

\section{Proof of Lemma~\ref{Lemma:CAF-error}}
By the convention $0/0=0$, we define $\tau_k = \max\{\frac{2\epsilon_k}{\sigma^2}, \frac{\lambda_k\|v_k\|^2}{(1+\sigma)^2}\}$ for every integer $k \geq 1$. Then, we have
\begin{eqnarray*}
2\lambda_k\epsilon_k & \leq & \sigma^2\|y_k - x_{k-1}\|^2, \\
\|\lambda_k v_k\| & \leq & \|\lambda_k v_k + y_k - x_{k-1}\| + \|y_k - x_{k-1}\| \leq (1+\sigma)\|y_k - x_{k-1}\|. 
\end{eqnarray*}
which implies that $\lambda_k \tau_k \leq \|y_k - x_{k-1}\|^2$ for every integer $k \geq 1$. This together with Lemma~\ref{Lemma:CAF-descent} yields
\begin{equation*}
\tfrac{\inf_{z^\star \in A^{-1}(0)}\|x_0 - z^\star\|^2}{1-\sigma^2} \geq \sum_{i=1}^k \|y_i - x_{i-1}\|^2 \geq \left(\inf_{1 \leq i \leq k} \tau_i\right)\left(\sum_{i=1}^k \lambda_i\right). 
\end{equation*}
Combining this inequality with the definition of $\tau_k$ yields the desired results. 

\section{Proof of Lemma~\ref{Lemma:CAF-control}}
For $p=1$, the large-step condition implies that $\lambda_k \geq \theta$ for all $k \geq 0$.  For $p \geq 2$, the large-step condition implies
\begin{equation*}
\sum_{i=1}^k (\lambda_i)^{-\frac{2}{p-1}}\theta^{\frac{2}{p-1}} \leq \sum_{i=1}^k (\lambda_i)^{-\frac{2}{p-1}}(\lambda_i\|x_{i-1} - y_i\|^{p-1})^{\frac{2}{p-1}} = \sum_{i=1}^k \|x_{i-1} - y_i\|^2 \overset{\text{Lemma}~\ref{Lemma:CAF-descent}}{\leq} \tfrac{1}{1-\sigma^2}\left(\inf_{z^\star \in A^{-1}(0)}\|x_0 - z^\star\|^2\right). 
\end{equation*}
By the H\"{o}lder inequality, we have
\begin{equation*}
\sum_{i=1}^k 1 = \sum_{i=1}^k \left(\tfrac{1}{(\lambda_i)^{\frac{2}{p-1}}}\right)^{\frac{p-1}{p+1}} (\lambda_i)^{\frac{2}{p+1}} \leq \left(\sum_{i=1}^k \tfrac{1}{(\lambda_i)^{\frac{2}{p-1}}}\right)^{\frac{p-1}{p+1}}\left(\sum_{i=1}^k \lambda_i\right)^{\frac{2}{p+1}}.
\end{equation*}
For the ease of presentation, we define $C = \frac{1}{(1-\sigma^2)}\theta^{-\frac{2}{p-1}}(\inf_{z^\star \in A^{-1}(0)}\|x_0 - z^\star\|^2)$. Putting these pieces together yields
\begin{equation*}
k \leq C^{\frac{p-1}{p+1}}\left(\sum_{i=1}^k \lambda_i\right)^{\frac{2}{p+1}},
\end{equation*}
which implies
\begin{equation*}
\sum_{i=1}^k \lambda_i \geq \left(\tfrac{1}{C}\right)^{\frac{p-1}{2}} k^{\frac{p+1}{2}}.   
\end{equation*}
This completes the proof. 

\end{document}